\newtheorem{thm}{Theorem}[section]
\newtheorem{cor}[thm]{Corollary}
\newtheorem{lem}[thm]{Lemma}
\newtheorem{prop}[thm]{Proposition}
\newtheorem{ex}[thm]{Example}
\newtheorem{defn}[thm]{Definition}
\newtheorem{rmk}[thm]{Remark}
\newtheorem{theorem}{Theorem}[section]
\newtheorem{definition}[theorem]{Definition}
\newtheorem{lemma}[theorem]{Lemma}
\newtheorem{remark}[theorem]{Remark}
\newtheorem{proposition}[theorem]{Proposition}
\newtheorem{corollary}[theorem]{Corollary}
\renewcommand{\theequation}{\arabic{section}.\arabic{equation}}
\def\a{\alpha}
\def\b{\beta}
\begin{document}

%-----------------------------------------------------------------------------------
\title{Structures of BiHom-Poisson algebras}
%\label{firstpage}
\author{ Sylvain Attan\thanks{D\'{e}partement de Math\'{e}matiques, Universit\'{e} d'Abomey-Calavi
01 BP 4521, Cotonou 01, B\'{e}nin. E.mail: syltane2010@yahoo.fr}\and Ismail Laraiedh\thanks{Departement of Mathematics, Faculty of Sciences, Sfax University, BP 1171, 3000 Sfax, Tunisia. E.mail:
Ismail.laraiedh@gmail.com and Departement of Mathematics, College of Sciences and Humanities - Kowaiyia, Shaqra University,
Kingdom of Saudi Arabia. E.mail:
ismail.laraiedh@su.edu.sa}}

%--------------------------------------------------------------------
\maketitle
% ----------------------------------------------------------------

\begin{abstract}
This paper  gives some constructions results and examples of
BiHom-Poisson algebras. Next, BiHom-flexible algebras are defined and it is shown that admissible BiHom-Poisson algebras are BiHom-flexible.
Furthermore, generalized derivations of Bihom-Poisson algebras are introduced and some their basic properties  are given. Finally,  BiHom-Poisson modules and  several constructions of these notions are obtained.
\end{abstract}
{\bf 2010 Mathematics Subject Classification:} 17A20, 17A30,  17B05, 17B10, 17B65 .
\\
{\bf Keywords:} BiHom-Poisson algebras, BiHom-Poisson admissible, generalized
derivation,\\ BiHom-Poisson modules.
\section{Introduction}
A Poisson algebra $(P,\{\cdot,\cdot\}, \mu)$ consists of a commutative associative algebra
$(A,\mu)$\\ together with a Lie structure $\{\cdot,\cdot\},$ satisfying the Leibniz identity:
$$\{\mu(x,y),z\}=\mu(\{x,z\},y)+\mu(x,\{y,z\})$$
 These algebras firstly appeared in the work of Sim\'eon-Denis Poisson two centuries ago when he was studying the three-body problem in celestial mechanics. Since then, Poisson algebras have shown to be connected to many areas of mathematics and physics. Indeed,
 in mathematics, Poisson algebras play a fundamental role in Poisson geometry \cite{iv}, quantum groups \cite{vca},\cite{vgd} and deformation of commutative associative algebras \cite{mg} whereas in physics, Poisson algebras represent a major part of deformation quantization \cite{mk}, Hamiltonian mechanics \cite{via} and topological field theories \cite{pct}. Poisson-like structures are also used in the study of vertex operator algebras \cite{efd}.

Algebras where the identities defining the structure are twisted by a homomorphism are called Hom-algebras.
Hom-type algebras appeared in the Physics literature of the 1990's, when looking for quantum deformations
of some algebras of vector fields, like Witt and Virasoro algebras
(\cite{AizawaSaito}, \cite{Hu}). It was observed that algebras obtained by deforming certain Lie algebras
no longer satisfied the Jacobi identity, but a
modified version of it involving a homomorphism. An axiomatization of this type of algebras
(called Hom-Lie algebras) was given in \cite{JDS}, \cite{DS}. The associative counterpart of Hom-Lie algebras
(called Hom-associative algebras) has been introduced in \cite{ms}, where it was proved also
that the commutator bracket defined by the multiplication in a Hom-associative
algebra gives rise to a Hom-Lie algebra.

A BiHom-algebra is an algebra in such a way that the identities defining the structure are twisted by two homomorphisms
$\alpha$ and $\beta$. This class of algebras was introduced from a categorical approach in \cite{Gra-Mak-Men-Pan} as an extension of the class of Hom-algebras.
These algebraic structures include BiHom-associative algebras, BiHom-Jordan algebras, BiHom-alternative algebras and BiHom-Lie algebras. 
BiHom-Poisson algebras was first introduced in \cite{XLi} and  studied in 
\cite{hadimi} where in partiular, the concept of module of BiHom-Poisson algebra is introduced and  admissible BiHom-Poisson algebras, and only these BiHom-algebras, are shown to give rise to BiHom-Poisson algebras via polarization.
More applications of BiHom-type can be found in \cite{S S, Guo1, chtioui1,chtioui2, ismail, Li C, Liu2}.

The purpose of this paper is further to study BiHom-Poisson algebras. 
The paper is organized as follows. Section 2 contains some important basic notions and notations related to BiHom-algebras, BiHom-Poisson algebras
and modules over BiHom-associative algebras.
Section 3 presents some constructions results of BiHom-Poisson algebras, BiHom-flexible structures and admissible BiHom-
Poisson algebras. In section 4 we give some basic properties concerning derivation algebras, quasiderivation
algebras and generalized derivation algebras of Bihom-Poisson algebras. In section 5, we introduce and give some properties of BiHom-Poisson modules. Next, we prove that from a given
BiHom-Poisson modules, a sequence of this kind of modules can be constructed and we then define the semi-direct product of BiHom-Poisson algebras. Finally,   we define the first and second cohomology spaces of BiHom-Poisson algebras.

Throughout this paper $\mathbb{K}$ is an algebraically closed field of characteristic $0$ and $A$ is a $\mathbb{K}$-vector space.
\section{Preliminaries}

This section contains necessary important basic notions and notations which will be used in next sections.
For the map $\mu: A^{\otimes 2}\longrightarrow A,$ we will sometimes $\mu(a\otimes b)$ as $\mu(a,b)$ or $ab$  for $a,b\in A$ and if $V$ is another vector space, $\tau_1: A\otimes V\longrightarrow V\otimes A$ (resp. $\tau_2: V\otimes A\longrightarrow A\otimes V$) denote the twist isomorphism $\tau_1(a\otimes v)=v\otimes a$ (resp.
$\tau_2(v\otimes a)=a\otimes v$).
\begin{defn}
A BiHom-module is a pair $(M,\alpha_M,\beta_M)$ consisting of a $\mathbb{K}$-module $M$ and
a linear self-maps $\alpha_M, \beta_M: M\longrightarrow M$  such that $\alpha_M\beta_M=\beta_M\alpha_M.$ A morphism\\
$f: (M,\alpha_M, \beta_M)\rightarrow (N,\alpha_N,\beta_N)$ of BiHom-modules is a linear map
 $f: M\longrightarrow N$ such that $f\alpha_M=\alpha_N f$ and
 $f\beta_M=\beta_N f.$
\end{defn}
\begin{defn}\cite{GRAZIANI}
A BiHom-algebra is a quadruple $(A,\mu,\alpha,\beta)$ in which $(A,\alpha,\beta)$ is a BiHom-module, $\mu : A^{\otimes 2} \rightarrow A$ is a linear map.
The BiHom-algebra $(A,\mu,\alpha;\beta)$ is said to be  multiplicative if $\alpha\circ\mu=\mu\circ\alpha^{\otimes 2}$ and $\beta\circ\mu=\mu\circ\beta^{\otimes 2}$ (multiplicativity).
\end{defn}
\begin{defn}
\begin{enumerate}
\item A BiHom-algebra $(A,\mu,\alpha,\beta)$ is said to be BiHom-commutative if
$$\mu(\beta(x),\alpha(y))=\mu(\beta(y),\alpha(x)),~\forall x,y\in A$$.
\item A BiHom-associative algebra \cite{GRAZIANI} is a multiplicative Bihom-algebra $(A,\mu,\alpha,\beta)$
satisfying the following BiHom-associativity condition:
\begin{equation}\label{BiHom ass identity}
    as_{A}(x,y,z):=\mu(\mu(x,y),\beta(z))-\mu(\alpha(x),\mu(y,z))=0,\ \text{for all}\ x,y,z \in A.\
\end{equation}
\item A BiHom-Lie algebra \cite{GRAZIANI} is a multiplicative Bihom-algebra $(A,\{\cdot,\cdot\},\alpha,\beta)$ satisfying the  BiHom-skew-symmetry and the BiHom-Jacobi identities i.e.
\begin{eqnarray}
&&\{\beta(x),\alpha(y)\}=-\{\beta(y),\alpha(x)\}  \nonumber\\
&&\circlearrowleft_{x,w,z}\{\beta^2(x),\{\beta(y),\alpha(z)\}\}=0 \label{BiHJacob}
\end{eqnarray}
where $\circlearrowleft_{x,y,z}$ denotes the summation over the cyclic permutation on $x,y,z.$
\end{enumerate}
\end{defn}
Clearly, a Hom-associative algebra $(A,\mu,\alpha)$ can be regarded as a BiHom-associative algebra $(A,\mu,\alpha,\alpha)$.
\begin{defn}\label{si}\cite{ XLi}
A BiHom-Poisson algebra consists of a  vector space $A,$ two  bilinear maps
$\mu, \{\cdot,\cdot\}: A^{\otimes2}\longrightarrow A,$  linear maps $\alpha,\beta: A\longrightarrow A$
 such that
\begin{enumerate}
\item
$(A,\mu,\alpha, \beta)$ is a BiHom-commutative BiHom-associative algebra,
\item
$(A,\{\cdot,\cdot\},\alpha, \beta)$ is a BiHom-Lie algebra,
\item
the BiHom-Leibniz identity
\begin{eqnarray}
\{\alpha\beta(x),\mu(y,z)\}=\mu(\{\beta(x),y\},\beta(z))+\mu(\beta(y),\{\alpha(x),z\})\label{BHL}
\end{eqnarray}
is satisfied for all $x,y,z\in A.$
\end{enumerate}
\end{defn}
  In a BiHom-Poisson algebra $(A,\{\cdot,\cdot\},\mu,\alpha,\beta)$, the operations $\mu$ and $\{\cdot,\cdot\}$ are called the
  BiHom-associative product and the BiHom-Poisson bracket, respectively.
\begin{rmk}
  A non-BiHom-commutative BiHom-Poisson algebra is a BiHom-Poisson algebra without the BiHom-commutativity assumption \cite{hadimi}. These Bihom-algebras are called BiHom-Poisson algebras \cite{XLi}.
  \end{rmk}
  \begin{defn}
Let $(A,\{\cdot,\cdot\},\mu,\alpha,\beta)$ a BiHom-Poisson algebra. A subspace $H$ of $A$ is called
\begin{enumerate}
\item
A BiHom-subalgebra of $A$ if \[\alpha(H)\subseteq H,~\beta(H)\subseteq H,~\mu(H,H)\subseteq H ~\textrm{and} ~\{H,H\}\subseteq H.\]
\item
A left-BiHom ideal of $A$ if \[\alpha(H)\subseteq H,~\beta(H)\subseteq H,~\mu(A,H)\subseteq H~\textrm{and} ~\{A,H\}\subseteq H.\]
\item
A right-BiHom ideal of $A$ if \[\alpha(H)\subseteq H,~\beta(H)\subseteq H,~\mu(H,A)\subseteq H~\textrm{and} ~\{H,A\}\subseteq H.\]
\item
A two sided BiHom-ideal if $H$ is both a left and a right BiHom-ideal of $A$.
  \end{enumerate}
  Note that, if $\alpha$ and $\beta$ are bijective, then the notion of left-BiHom ideals is equivalent to the one of right-BiHom ideals.
\begin{defn}
Let $(A,\{\cdot,\cdot\}, \mu, \alpha,\beta)$ be a Bihom-Poisson algebra. If $Z(A) = \{x \in A |~ \{x,y\}=\mu(x, y) = 0,~~\forall y \in A\}$, then $Z(A)$ is called the centralizer of $A$.
\end{defn}
  \end{defn}
\begin{defn}
\label{def:morphism}
Let $(A,\{\cdot,\cdot\},\mu,\alpha,\beta)$ a BiHom-Poisson algebra.
\begin{enumerate}
\item
$A$ is multiplicative if
\[
\alpha\{\cdot,\cdot\} = \{\cdot,\cdot\} \alpha^{\otimes 2},~~\beta\{\cdot,\cdot\} = \{\cdot,\cdot\} \beta^{\otimes 2} \quad\text{and}\quad \alpha\mu = \mu\alpha^{\otimes 2},~~\beta\mu = \mu\beta^{\otimes 2}.
\]
  \item $(A,\mu,\alpha,\beta)$ is said to be regular if $\alpha$ and $\beta$ are algebra automorphisms.
  \item $(A,\mu,\alpha,\beta)$ is said to be involutive if $\alpha$ and $\beta$ are two involutions,  that is $\alpha^2=\beta^2=id$.
\item
Let $(A',\{\cdot,\cdot\}',\mu',\alpha',\beta')$ be another BiHom-Poisson algebra. A weak morphism $f \colon A \to A'$ is a linear map such that
\[
f\{\cdot,\cdot\} = \{\cdot,\cdot\}' f^{\otimes 2} \quad\text{and}\quad
f\mu = \mu'f^{\otimes 2}.
\]
A morphism $f \colon A \to A'$ is a weak morphism such that $f\alpha = \alpha'f$ and $f\beta=\beta' f$.
\end{enumerate}
\end{defn}
%%%%%%%%%%%%%%%%%%%%%%%%%%%
Note that a $5$-tuple $(A,\{\cdot,\cdot\},\mu,\alpha,\beta)$ is multiplicative if and only if the twisting map $\alpha,\beta \colon A \to A$ are morphisms.

Denote by $\Gamma_{f}=\{x+f(x);~~x\in A\}\subset A\oplus A'$  the graph of a linear map $f:A\longrightarrow A^{'}$.
\begin{defn} Let $(A,\mu,\alpha,\beta)$ be any BiHom-algebra.
\begin{enumerate}
\item A BiHom-module $(V,\phi,\psi)$ is called an $A$-bimodule if it comes equipped with a left and a right structures maps on $V$ that is morphisms
$\rho_{l}: (A\otimes V, \alpha\otimes\phi,\beta\otimes\psi)\rightarrow (V,\phi,\psi),~a\otimes v\mapsto a.v$ and $\rho_{r}:(V\otimes A,\phi\otimes\alpha,\psi\otimes\psi)\rightarrow (V,\phi,\psi),~v\otimes a\mapsto v.a$ of Bihom-modules.
\item
A morphism $f:(V,\phi,\psi,\rho_{l},\rho_{r})\rightarrow (W,\phi',\psi',\rho_{l}',\rho_{r}')$ of $A$-bimodules is a morphism of the underlying BiHom-modules such that
$$f\circ\rho_{l}=\rho_{l}'\circ(Id_{A}\otimes f)~~and~~f\circ\rho_{r}=\rho_{r}'\circ( f \otimes Id_{A}).$$
That yields the commutative diagrams
  $$
\xymatrix{
 A\otimes V \ar[d]_{  Id_{A}\otimes f}\ar[rr]^{\rho_l}
                && V  \ar[d]^{f}  \\
 A\otimes W \ar[rr]^{\rho_l'}
                && W}\quad\xymatrix{
 V\otimes A \ar[d]_{ f\otimes Id_{A} }\ar[rr]^{\rho_r}
                && V  \ar[d]^{f}  \\
 W\otimes A \ar[rr]^{\rho_r'}
                && W  }
$$

\end{enumerate}
\end{defn}

Now, let consider the following notions for BiHom-associative algebras.
\begin{defn}
Let $(A,\mu,\alpha,\beta)$ be a BiHom-associative algebra, 
$(L,\{\cdot,\cdot\},\alpha,\beta)$ be a Hom-Lie algebra
 and $(V, \phi,\psi)$ be a BiHom-module. Then
\begin{enumerate}
\item A left BiHom-associative $A$-module structure on $V$ \cite{GRAZIANI} consists of a morphism $\rho_{l}: A\otimes V\longrightarrow V$ of BiHom-modules, such that
\begin{eqnarray}\label{00}
\rho_{l}\circ(\alpha\otimes\rho_{l})=\rho_{l}\circ(\mu\otimes\psi).\label{LeftAssMod}
\end{eqnarray}
In terms of diagram, we have
  $$
\xymatrix{
 A\otimes A\otimes V \ar[d]_{ \mu\otimes\psi}\ar[rr]^{\alpha\otimes\rho_{l}}
                && A\otimes V  \ar[d]^{\rho_{l}}  \\
 A\otimes V \ar[rr]^{\rho_{l}}
                && V  }$$
\item A right BiHom-associative $A$-module structure on $V$ \cite{GRAZIANI} consists of a morphism $\rho_{r}: V\otimes A\longrightarrow V$ of BiHom-modules, such that
\begin{eqnarray}\label{11}
\rho_{r}\circ(\phi\otimes\mu)=\rho_{r}\circ(\rho_{r}\otimes\beta).\label{RightAssMod}
\end{eqnarray}
In terms of diagram, we have
 $$
\xymatrix{
V\otimes A\otimes A  \ar[d]_{ \phi\otimes\mu}\ar[rr]^{\rho_{r}\otimes\beta}
                && V\otimes A  \ar[d]^{\rho_{r}}  \\
 V\otimes A \ar[rr]^{\rho_{r}}
                && V  }$$
\item A left BiHom-Lie $L$-module structure on $V$ \cite{hadimi}\cite{cyong} consists of a structure map $\rho: L\otimes V\longrightarrow V$ 
such that 
\begin{eqnarray}
\rho(\{\beta(x),y \},\psi(v))&=&\rho(\alpha\beta(x),\rho(y,v))-\rho(\beta(y),\rho(\alpha(x),v))\label{Li1}
\end{eqnarray}
\end{enumerate}
\end{defn}
\section{BiHom-Poisson algebras}

\subsection{Constructions of BiHom-Poisson algebras}
In this subsection, we provide
some constructions results of BiHom-Poisson algebras.
\begin{prop}
Let $(A,\{\cdot,\cdot\},\mu,\alpha,\beta)$ be a BiHom-Poisson algebra and $I$ be a two-sided BiHom-ideal of $(A,\{\cdot,\cdot\},\mu,\alpha,\beta)$.
Then $(A/I,[\cdot,\cdot],\overline{\mu},\overline{\alpha},\overline{\beta})$ is a BiHom-Poisson algebra where $[\overline{x},\overline{y}] =\overline{\{x,y\}} ,~\overline{\mu}(\overline{x},\overline{y})=\overline{\mu(x,y)}$,
$\overline{\alpha}(\overline{x})=\overline{\alpha(x)}$  and $\overline{\beta}(\overline{x})=\overline{\beta(x)}$, for all $\overline{x},\overline{y}\in A/I$
\begin{proof}
We only prove item 1. of definition \ref{si}, item 2. and item 3. are being proved similarly

For all $\overline{x},\overline{y},\overline{z}\in A/I$ we have
$$\begin{array}{llllll}
as_{A/I}(\overline{x},\overline{y},\overline{z})&=&\overline{\mu}(\overline{\mu}(\overline{x},\overline{y}),\overline{\beta}(\overline{z}))-\overline{\mu}(\overline{\alpha}(\overline{x}),\overline{\mu}(\overline{y},\overline{z}))\\[0.2cm]
&=&\overline{\mu(\mu(x,y),\beta(z))-\mu(\alpha(x),\mu(y,z))}\\[0.2cm]
&=&\overline{0}~(by~ BiHom-associativity ~of ~A).
\end{array}$$
Then $(A/I,\overline{\mu},\overline{\alpha},\overline{\beta})$ is a BiHom-associative algebra.
$$\begin{array}{llll}
\overline{\mu}(\overline{\beta}(\overline{x}),\overline{\alpha}(\overline{y}))
&=&\overline{\mu(\beta(x),\alpha(y))}\\[0.2cm]&=&\overline{\mu(\beta(y),\alpha(x))}~(by~BiHom-commutativity ~of A).\\[0.2cm]
&=&\overline{\mu}(\overline{\beta}(\overline{y}),\overline{\alpha}(\overline{x}))
\end{array}$$Then $(A/I,\overline{\mu},\overline{\alpha},\overline{\beta})$ is a BiHom-commutative algebra.
\end{proof}

\end{prop}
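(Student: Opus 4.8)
The plan is to follow exactly the template the authors use for item~1: every axiom of Definition~\ref{si} is an identity built from $\mu$, $\{\cdot,\cdot\}$, $\alpha$, $\beta$ evaluated on arbitrary arguments, so it suffices to push it through the canonical projection $\pi\colon A\to A/I$, $x\mapsto\overline{x}$, and use that $\pi$ is a surjective morphism intertwining both products and both twisting maps. The genuinely necessary preliminary --- and the only place the hypothesis that $I$ is a \emph{two-sided} BiHom-ideal enters --- is to check that $\overline{\mu}$, $[\cdot,\cdot]$, $\overline{\alpha}$, $\overline{\beta}$ are well defined: if $x-x'\in I$ and $y-y'\in I$ then $\mu(x,y)-\mu(x',y')=\mu(x-x',y)+\mu(x',y-y')\in\mu(I,A)+\mu(A,I)\subseteq I$, likewise $\{x,y\}-\{x',y'\}\in\{I,A\}+\{A,I\}\subseteq I$, and $\alpha(x-x')\in\alpha(I)\subseteq I$, $\beta(x-x')\in\beta(I)\subseteq I$. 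With this in hand, $\pi$ satisfies $\pi\mu=\overline{\mu}\,\pi^{\otimes2}$, $\pi\{\cdot,\cdot\}=[\cdot,\cdot]\,\pi^{\otimes2}$, $\pi\alpha=\overline{\alpha}\,\pi$, $\pi\beta=\overline{\beta}\,\pi$ by definition, and $\pi^{\otimes k}$ is surjective for every $k$. Multiplicativity of $(A/I,\overline{\mu},\overline{\alpha},\overline{\beta})$ and of $(A/I,[\cdot,\cdot],\overline{\alpha},\overline{\beta})$, and the BiHom-module relation $\overline{\alpha}\,\overline{\beta}=\overline{\beta}\,\overline{\alpha}$, are immediate consequences of the corresponding facts in $A$.

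Item~1 is what the displayed argument already does (it is exactly ``apply $\pi$ to \eqref{BiHom ass identity} and to the BiHom-commutativity identity''). For item~2 I would check BiHom-skew-symmetry via $[\overline{\beta}(\overline{x}),\overline{\alpha}(\overline{y})]=\overline{\{\beta(x),\alpha(y)\}}=-\,\overline{\{\beta(y),\alpha(x)\}}=-[\overline{\beta}(\overline{y}),\overline{\alpha}(\overline{x})]$, and the BiHom-Jacobi identity by applying $\pi$ to \eqref{BiHJacob}, which gives $\circlearrowleft_{\overline{x},\overline{y},\overline{z}}[\overline{\beta}^{2}(\overline{x}),[\overline{\beta}(\overline{y}),\overline{\alpha}(\overline{z})]]=\overline{0}$ for all $\overline{x},\overline{y},\overline{z}\in A/I$; hence $(A/I,[\cdot,\cdot],\overline{\alpha},\overline{\beta})$ is a BiHom-Lie algebra. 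For item~3 I would apply $\pi$ to the BiHom-Leibniz identity \eqref{BHL}: from $\{\alpha\beta(x),\mu(y,z)\}=\mu(\{\beta(x),y\},\beta(z))+\mu(\beta(y),\{\alpha(x),z\})$ one gets, after projecting, $[\overline{\alpha}\,\overline{\beta}(\overline{x}),\overline{\mu}(\overline{y},\overline{z})]=\overline{\mu}([\overline{\beta}(\overline{x}),\overline{y}],\overline{\beta}(\overline{z}))+\overline{\mu}(\overline{\beta}(\overline{y}),[\overline{\alpha}(\overline{x}),\overline{z}])$ for all $\overline{x},\overline{y},\overline{z}\in A/I$. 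Combining the three items yields that $(A/I,[\cdot,\cdot],\overline{\mu},\overline{\alpha},\overline{\beta})$ is a BiHom-Poisson algebra.

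I do not expect a real obstacle: once well-definedness is established the argument is purely formal, since $\pi$ transports each defining identity of $A$ to the corresponding one on $A/I$ and every element of $A/I$ (resp.\ of $(A/I)^{\otimes k}$) is hit by $\pi$ (resp.\ $\pi^{\otimes k}$). The only points requiring care are to invoke the \emph{two-sided} hypothesis (a one-sided BiHom-ideal would not make $\overline{\mu}$ and $[\cdot,\cdot]$ well defined) and not to overlook the auxiliary checks --- multiplicativity of both structures and $\overline{\alpha}\,\overline{\beta}=\overline{\beta}\,\overline{\alpha}$ --- which are precisely what the displayed proof of item~1 tacitly assumes.
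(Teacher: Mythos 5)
Your proposal is correct and follows essentially the same route as the paper: push each defining identity of Definition~\ref{si} through the canonical projection onto $A/I$. You are in fact a bit more complete than the printed proof, since you explicitly verify well-definedness of $\overline{\mu}$, $[\cdot,\cdot]$, $\overline{\alpha}$, $\overline{\beta}$ (where the two-sided BiHom-ideal hypothesis is used) and write out items~2 and~3, which the paper only states are ``proved similarly.''
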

\begin{prop}\label{adimi}\cite{hadimi}
Let $(A, \mu, \alpha,\beta)$ be a regular BiHom-associative algebra. Then\\
$A^{-} = (A, \{\cdot,\cdot \}, \mu, \alpha,\beta)$
is a regular non-BiHom-commutative BiHom-Poisson algebra, where $\{\cdot,\cdot\}=\mu-\mu\circ(\alpha^{-1}\beta\otimes\alpha\beta^{-1})\circ\tau$,

where for all $x,y\in A,~~\tau(x\otimes y)=y\otimes x$.
\end{prop}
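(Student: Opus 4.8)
The plan is to verify, in order, the three defining conditions of a BiHom-Poisson algebra from Definition \ref{si}, together with regularity, for the quadruple $A^{-}=(A,\{\cdot,\cdot\},\mu,\alpha,\beta)$ with $\{x,y\}=\mu(x,y)-\mu(\alpha^{-1}\beta(y),\alpha\beta^{-1}(x))$. Item 1 is immediate: $(A,\mu,\alpha,\beta)$ is BiHom-associative by hypothesis, and since we are producing a \emph{non}-BiHom-commutative BiHom-Poisson algebra, there is nothing to check there; regularity of $A^{-}$ is inherited verbatim from regularity of $(A,\mu,\alpha,\beta)$, and one checks multiplicativity of the new bracket by writing $\alpha\{x,y\}=\alpha\mu(x,y)-\alpha\mu(\alpha^{-1}\beta(y),\alpha\beta^{-1}(x))$ and using $\alpha\mu=\mu\alpha^{\otimes2}$ together with the fact that $\alpha$ commutes with $\alpha^{-1}\beta$ and $\alpha\beta^{-1}$ (all of $\alpha,\beta,\alpha^{-1},\beta^{-1}$ pairwise commute since $\alpha\beta=\beta\alpha$); the same for $\beta$.

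Next I would establish that $(A,\{\cdot,\cdot\},\alpha,\beta)$ is a BiHom-Lie algebra. BiHom-skew-symmetry: compute $\{\beta(x),\alpha(y)\}=\mu(\beta(x),\alpha(y))-\mu(\alpha^{-1}\beta\alpha(y),\alpha\beta^{-1}\beta(x))=\mu(\beta(x),\alpha(y))-\mu(\beta(y),\alpha(x))$, which is visibly antisymmetric in $x\leftrightarrow y$, giving $\{\beta(x),\alpha(y)\}=-\{\beta(y),\alpha(x)\}$. For the BiHom-Jacobi identity, the standard approach is to substitute the definition of $\{\cdot,\cdot\}$ into $\circlearrowleft_{x,y,z}\{\beta^2(x),\{\beta(y),\alpha(z)\}\}$, expand into a sum of terms each of the form $\mu(\mu(\cdot,\cdot),\cdot)$ or $\mu(\cdot,\mu(\cdot,\cdot))$ with various twistings, then repeatedly apply BiHom-associativity \eqref{BiHom ass identity} (in the form $\mu(\mu(a,b),\beta(c))=\mu(\alpha(a),\mu(b,c))$, and its rewritings obtained by pre-applying $\alpha^{-1},\beta^{-1}$) to bring everything to a common normal form, after which the terms cancel in triples by the cyclic symmetry. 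This is exactly the BiHom-analogue of the classical fact that the commutator of an associative product is a Lie bracket, adapted as in the Hom-associative case of \cite{ms}; the bookkeeping of the $\alpha^{\pm1},\beta^{\pm1}$ factors is the only subtlety.

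Finally I would check the BiHom-Leibniz identity \eqref{BHL}, $\{\alpha\beta(x),\mu(y,z)\}=\mu(\{\beta(x),y\},\beta(z))+\mu(\beta(y),\{\alpha(x),z\})$. Here I expand the left side as $\mu(\alpha\beta(x),\mu(y,z))-\mu(\alpha^{-1}\beta\mu(y,z),\alpha\beta^{-1}\alpha\beta(x))=\mu(\alpha\beta(x),\mu(y,z))-\mu(\mu(\alpha^{-1}\beta(y),\alpha^{-1}\beta(z)),\alpha^2(x))$ using multiplicativity, and expand each term on the right side similarly; then I reduce all four resulting cubic terms to normal form using BiHom-associativity and verify the cancellation. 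The main obstacle, as usual in this type of result, is purely computational: keeping the twisting maps $\alpha^{\pm1}\beta^{\pm1}$ correctly tracked through the associativity rewrites so that the right-hand terms match the left-hand ones exactly. I expect the Leibniz identity to require the most careful manipulation, since it mixes both products, but no new idea beyond disciplined use of BiHom-associativity and multiplicativity is needed. (This proposition is quoted from \cite{hadimi}, so one may alternatively simply cite that reference; I sketch the direct argument for completeness.)
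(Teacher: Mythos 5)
Your proposal is correct, but note that the paper itself offers no argument for this statement: Proposition \ref{adimi} is simply quoted from \cite{hadimi} and immediately put to use in Example \ref{e1}, so your direct verification is a more self-contained route than the paper's citation. What you give is accurate where it is explicit: the skew-symmetry computation $\{\beta(x),\alpha(y)\}=\mu(\beta(x),\alpha(y))-\mu(\beta(y),\alpha(x))$ is right, multiplicativity and regularity of the new bracket do follow from $\alpha\beta=\beta\alpha$ and the multiplicativity of $\mu$, and your expansion of the left side of \eqref{BHL} as $\mu(\alpha\beta(x),\mu(y,z))-\mu(\mu(\alpha^{-1}\beta(y),\alpha^{-1}\beta(z)),\alpha^{2}(x))$ is correct; from there each of the four terms on the right of \eqref{BHL} reduces in a single application of \eqref{BiHom ass identity} (e.g.\ $\mu(\mu(\beta(x),y),\beta(z))=\mu(\alpha\beta(x),\mu(y,z))$ and $\mu(\mu(\alpha^{-1}\beta(y),\alpha^{-1}\beta(z)),\alpha^{2}(x))=\mu(\beta(y),\mu(\alpha^{-1}\beta(z),\alpha^{2}\beta^{-1}(x)))$), so the Leibniz identity closes exactly as you predict, and notably without ever invoking BiHom-commutativity, which is consistent with the ``non-BiHom-commutative'' conclusion. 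The BiHom-Jacobi identity is the part you leave most schematic; it is indeed the twelve-term cancellation behind the known fact (due to Graziani--Makhlouf--Menini--Panaite \cite{GRAZIANI}, and restated in \cite{hadimi}) that a regular BiHom-associative algebra yields a BiHom-Lie algebra under this commutator-type bracket, so you could shorten your argument by citing that result for item 2 and reserving the explicit computation for the Leibniz identity, which is the only genuinely new check. In short: the paper buys brevity by citation; your approach buys a verifiable, self-contained proof at the cost of the routine but lengthy Jacobi bookkeeping.
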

\begin{ex} \label{e1}
Consider the 2-dimensional regular BiHom-associative algebras $(A,\mu,\alpha,\beta)$ with a basis $(e_1,e_2),$ (see \cite{GRAZIANI}) defined by

$$\begin{array}{llllll}
  \alpha(e_1)=e_1,&&\alpha(e_2)=\frac{b(1-a)}{a}e_1+a e_2, \\
 \beta(e_1)=e_1,&& \beta(e_2)=be_1+(1-a) e_2, \\
  \mu(e_1,e_1)=e_1, && \mu(e_1,e_2)= be_1+(1-a) e_2, \\
 \mu(e_2,e_1)=\frac{b(1-a)}{a}e_1+ae_2, && \mu(e_2,e_2)=\frac{b}{a}e_2,
\end{array}$$
where $a$, $b$ are parameters in $\mathbb{K}$, with $a\neq 0,1$.
Using the Proposition \ref{adimi}, the 5-tuple $(A,\{\cdot,\cdot\},\mu,\alpha,\beta)$ is a regular  non-BiHom-commutative BiHom-Poisson algebra where, $\{\cdot,\cdot\}=\mu-\mu\circ(\alpha^{-1}\beta\otimes\alpha\beta^{-1})\circ\tau$ and
$$\begin{array}{llllll}
 \alpha^{-1}(e_1)=e_1,&&\alpha^{-1}(e_2)=\frac{b(a-1)}{a^{2}}e_1+\frac{1}{a} e_2, \\
 \beta^{-1}(e_1)=e_1,&& \beta^{-1}(e_2)=\frac{b}{a-1}e_1+\frac{1}{1-a} e_2.
\end{array}$$

\end{ex}

\begin{prop}\label{bylv}\cite{ XLi}
 Let $(A,\{\cdot,\cdot\}_{A},\mu_A,\alpha_{A},\beta_{A})$ and
$(B,\{\cdot,\cdot\}_{B},\mu_B,\alpha_{B},\beta_{B})$ be two BiHom-Poisson algebras. Then there exists a BiHom-Poissson algebra $(A\oplus B,\{\cdot,\cdot\},\mu, \alpha=\alpha_{A}+\alpha_{B},\beta=\beta_{A}+\beta_{B}),$ where the bilinear maps $\{\cdot,\cdot\},\mu:(A\oplus B)^{\times 2}\longrightarrow (A\oplus B)$ are given by
 $$\begin{array}{llll}
 \mu(a_1+b_1,a_2+b_2)=\mu_A(a_1,a_2)+\mu_B(b_1,b_2),\nonumber\\
 \{a_1+b_1,a_2+b_2\}=\{a_{1},a_{2}\}_{A}+\{b_{1},b_{2}\}_{B}, \forall \ a_1,a_2\in A,\ \forall \ b_1,b_2\in B.

 \end{array}$$
 and the linear maps $\beta=\beta_{A}+\beta_{B},~\alpha=\alpha_{A}+\alpha_{B}: (A\oplus B)\longrightarrow (A\oplus B)$ are given by
$$ \begin{array}{lll}
(\alpha_{A}+\alpha_{B})(a+b)&=& \alpha_{A}(a)+\alpha_{B}(b),\\
(\beta_{A}+\beta_{B})(a+b)&=& \beta_{A}(a)+\beta_{B}(b),~~ \forall \ (a,b)\in A\times B.
 \end{array}$$
 \end{prop}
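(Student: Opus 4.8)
The plan is to verify directly that the quadruple $(A\oplus B,\{\cdot,\cdot\},\mu,\alpha,\beta)$ satisfies the three axioms of Definition \ref{si}, checking each one componentwise and using that the two given summands are already BiHom-Poisson algebras. The overall strategy is routine but bookkeeping-heavy: every operation on $A\oplus B$ is defined to act diagonally, so each identity in $A\oplus B$ will split into the corresponding identity in $A$ plus the corresponding identity in $B$, both of which hold by hypothesis.

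\textbf{Step 1: the BiHom-module and multiplicativity structure.} First I would record that $\alpha\beta=\beta\alpha$ on $A\oplus B$: this is immediate since $(\alpha_A+\alpha_B)(\beta_A+\beta_B)(a+b)=\alpha_A\beta_A(a)+\alpha_B\beta_B(b)=\beta_A\alpha_A(a)+\beta_B\alpha_B(b)$, using that $A$ and $B$ are BiHom-modules. Likewise $\alpha\mu=\mu\alpha^{\otimes 2}$, $\beta\mu=\mu\beta^{\otimes2}$ and the same for $\{\cdot,\cdot\}$ follow by applying the definitions of $\mu$, $\{\cdot,\cdot\}$, $\alpha$, $\beta$ on a general element $a_1+b_1$, $a_2+b_2$ and invoking multiplicativity of the two component algebras.

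\textbf{Step 2: the commutative associative part.} For axiom 1, I would compute $as_{A\oplus B}(a_1+b_1,a_2+b_2,a_3+b_3)$ using the definition of $\mu$ and $\beta$, $\alpha$ on $A\oplus B$; it collapses to $as_A(a_1,a_2,a_3)+as_B(b_1,b_2,b_3)=0$ because $(A,\mu_A,\alpha_A,\beta_A)$ and $(B,\mu_B,\alpha_B,\beta_B)$ are BiHom-associative. Similarly $\mu(\beta(a_1+b_1),\alpha(a_2+b_2))=\mu_A(\beta_A(a_1),\alpha_A(a_2))+\mu_B(\beta_B(b_1),\alpha_B(b_2))=\mu_A(\beta_A(a_2),\alpha_A(a_1))+\mu_B(\beta_B(b_2),\alpha_B(b_1))=\mu(\beta(a_2+b_2),\alpha(a_1+b_1))$ by BiHom-commutativity of each summand, giving BiHom-commutativity of $A\oplus B$.

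\textbf{Step 3: the Lie part and the Leibniz identity.} For axiom 2, BiHom-skew-symmetry of $\{\cdot,\cdot\}$ splits componentwise exactly as the commutativity did, and the BiHom-Jacobi identity \eqref{BiHJacob} for $A\oplus B$ becomes the sum of the BiHom-Jacobi identities in $A$ and in $B$ (here one uses $\beta^2=\beta_A^2+\beta_B^2$, which again follows from the diagonal definition). For axiom 3, I would expand both sides of \eqref{BHL} for the triple $(a_1+b_1,a_2+b_2,a_3+b_3)$: the left-hand side is $\{\alpha_A\beta_A(a_1),\mu_A(a_2,a_3)\}_A+\{\alpha_B\beta_B(b_1),\mu_B(b_2,b_3)\}_B$ and the right-hand side likewise splits into an $A$-part and a $B$-part, each matching by the BiHom-Leibniz identity for the respective algebra. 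Assembling Steps 1--3 shows $(A\oplus B,\{\cdot,\cdot\},\mu,\alpha,\beta)$ is a BiHom-Poisson algebra. No step presents a genuine obstacle; the only thing to be careful about is keeping the diagonal definitions of $\alpha$, $\beta$, $\mu$, $\{\cdot,\cdot\}$ straight so that the cross terms (e.g.\ $\mu_A(a_1,b_2)$) never appear — which they do not, precisely because of how $\mu$ and $\{\cdot,\cdot\}$ are defined on $A\oplus B$.
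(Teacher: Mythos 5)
Your proposal is correct and follows essentially the same route as the paper: the paper's proof simply asserts that the direct sum is BiHom-associative, BiHom-commutative and BiHom-Lie (and hence BiHom-Poisson) via the obvious componentwise verification, which is exactly the diagonal splitting you carry out in detail. Your write-up merely makes explicit (including the Leibniz identity) what the paper leaves as "easy to see."
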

\begin{proof}
It is easy to see that $(A\oplus B,\mu, \alpha_{A}+\alpha_{B},\beta_{A}+\beta_{B})$ is a BiHom-associative and BiHom-commutative algebra and
$(A\oplus B,\{\cdot,\cdot\}, \alpha_{A}+\alpha_{B},\beta_{A}+\beta_{B})$ is a BiHom-Lie algebra. Then $(A\oplus B,\{\cdot,\cdot\},\mu, \alpha_{A}+\alpha_{B},\beta_{A}+\beta_{B})$ is a BiHom-Poisson algebra.
\end{proof}
\begin{prop}
Let $(A,\{\cdot,\cdot\}_{A},\mu_A,\alpha_{1},\beta_{1})$ and $(B,\{\cdot,\cdot\}_{B},\mu_B,\alpha_{2},\beta_{2})$ be two BiHom-Poisson algebras and $\varphi: A\rightarrow B$ be a linear map. Then
$\varphi$ is a morphism from $(A,\{\cdot,\cdot\}_{A},\mu_A,\alpha_{1},\beta_{1})$ to $(B,\{\cdot,\cdot\}_{B},\mu_B,\alpha_{2},\beta_{2})$  if and only if its graph
$\Gamma_{\varphi}$ is a BiHom-subalgebra of $(A\oplus B, \{\cdot,\cdot\},\mu, \alpha,\beta).$
\end{prop}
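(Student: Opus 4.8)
The plan is to prove the two implications separately, using the definition of the direct sum BiHom-Poisson structure from Proposition \ref{bylv}.

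First I would handle the forward direction. Assume $\varphi$ is a morphism of BiHom-Poisson algebras. I must show $\Gamma_\varphi = \{a + \varphi(a) \mid a \in A\}$ is a BiHom-subalgebra of $A \oplus B$, i.e.\ it is closed under $\alpha$, $\beta$, $\mu$ and $\{\cdot,\cdot\}$. For closure under $\alpha = \alpha_1 + \alpha_2$, compute $\alpha(a + \varphi(a)) = \alpha_1(a) + \alpha_2(\varphi(a)) = \alpha_1(a) + \varphi(\alpha_1(a)) \in \Gamma_\varphi$, using $\varphi\alpha_1 = \alpha_2\varphi$; similarly for $\beta$. For closure under $\mu$, take $a_1 + \varphi(a_1), a_2 + \varphi(a_2) \in \Gamma_\varphi$ and compute $\mu(a_1 + \varphi(a_1), a_2 + \varphi(a_2)) = \mu_A(a_1,a_2) + \mu_B(\varphi(a_1),\varphi(a_2)) = \mu_A(a_1,a_2) + \varphi(\mu_A(a_1,a_2)) \in \Gamma_\varphi$, using that $\varphi$ commutes with the multiplications. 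The same computation with $\{\cdot,\cdot\}$ in place of $\mu$ gives closure under the bracket.

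Next I would handle the converse. Assume $\Gamma_\varphi$ is a BiHom-subalgebra. Closure under $\mu$ means that for all $a_1, a_2 \in A$ the element $\mu_A(a_1,a_2) + \mu_B(\varphi(a_1),\varphi(a_2))$ lies in $\Gamma_\varphi$, hence equals $c + \varphi(c)$ for some $c \in A$. Projecting onto $A$ (along $B$) forces $c = \mu_A(a_1,a_2)$, and then projecting onto $B$ gives $\mu_B(\varphi(a_1),\varphi(a_2)) = \varphi(\mu_A(a_1,a_2))$, i.e.\ $\varphi\mu_A = \mu_B\varphi^{\otimes 2}$. The identical argument applied to $\{\cdot,\cdot\}$ yields $\varphi\{\cdot,\cdot\}_A = \{\cdot,\cdot\}_B\varphi^{\otimes 2}$. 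Finally, closure under $\alpha$ says $\alpha_1(a) + \alpha_2(\varphi(a)) \in \Gamma_\varphi$ for all $a$, and the same projection argument gives $\alpha_2\varphi = \varphi\alpha_1$; likewise $\beta_2\varphi = \varphi\beta_1$. Together these four identities say precisely that $\varphi$ is a morphism of BiHom-Poisson algebras.

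I do not expect any serious obstacle here; the proof is a routine bookkeeping argument once one unwinds the definitions. The only point requiring a little care is the converse direction, where one must justify that membership in the graph $\Gamma_\varphi$ of an element of the form $u + v$ with $u \in A$, $v \in B$ forces $v = \varphi(u)$ — this is immediate because $A$ and $B$ are in direct sum, so the decomposition of any element of $A \oplus B$ into its $A$-part and $B$-part is unique, and for elements of $\Gamma_\varphi$ the $B$-part is determined by the $A$-part via $\varphi$.
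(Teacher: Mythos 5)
Your proof is correct and follows essentially the same route as the paper: direct computation of closure under $\mu$, $\{\cdot,\cdot\}$, $\alpha_1\oplus\alpha_2$, $\beta_1\oplus\beta_2$ for the forward direction, and reading off the morphism identities from membership in $\Gamma_\varphi$ for the converse. Your explicit projection argument in the converse is a welcome bit of extra care that the paper's proof leaves implicit, but it is not a different method.
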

\begin{proof}
  Let $\varphi: (A, \mu_A, \alpha_{1},\beta_{1})\longrightarrow (B, \mu_B ,\alpha_{2},\beta_{2})$ be a morphism of BiHom-Poisson algebras.
Then for all $u, v\in A,$
$$ \begin{array}{llllllllll}\mu(u+\varphi(u),v+\varphi(v))=(\mu_A(u,v)+\mu_B(\varphi(u),\varphi(v)))=(\mu_A(u,v)+\varphi(\mu_A(u,v))),\\
\{u+\varphi(u),v+\varphi(v)\}=\{u,v\}_{A}+\{\varphi(u),\varphi(v))\}_{B}=\{u,v\}_{A}+\varphi(\{u,v\}_{A})
\end{array}
$$
Thus the graph $\Gamma_{\varphi}$ is closed under the operations $\mu$ and $\{\cdot,\cdot\}$.
Furthermore since $\varphi\circ\alpha_{1}=\alpha_{2}\circ\varphi$, we have $(\alpha_{1}\oplus\alpha_{2})(u, \varphi(u)) = (\alpha_{1}(u),
\alpha_{2}\circ\varphi(u)) = (\alpha_{1}(u), \varphi\circ\alpha_{1}(u)).$ In the same way, we have $(\beta_{1}\oplus\beta_{2})(u, \varphi(u)) = (\beta_{1}(u),
\beta_{2}\circ\varphi(u)) = (\beta_{1}(u), \varphi\circ\beta_{1}(u)),$
which implies that $\Gamma_{\varphi}$ is closed under $\alpha_{1}\oplus\alpha_{2}$ and $\beta_{1}\oplus\beta_{2}$ Thus $\Gamma_{\varphi}$ is a BiHom-subalgebra of
$(A\oplus B,\{\cdot,\cdot\},\mu,\alpha,\beta).$\\
Conversely, if the graph $\Gamma_{\varphi}\subset A\oplus B$ is a
BiHom-subalgebra of
$(A\oplus B,\{\cdot,\cdot\},\mu,\alpha,\beta)$ then we have
$$\begin{array}{llllll}\mu(u+ \varphi(u)), v+ \varphi(v))=(\mu_A(u, v) + \mu_B(\varphi(u), \varphi(v)) )\in\Gamma_{\varphi},\\
\{u+ \varphi(u)), v+ \varphi(v)\}=\{u, v\}_{A} + \{\varphi(u), \varphi(v)\}_{B}\in\Gamma_{\varphi}

\end{array}
$$
which implies that
$$\begin{array}{llll}\mu_B(\varphi(u), \varphi(v))=\varphi(\mu_A(u, v)),\\
\{\varphi(u), \varphi(v)\}_{B}=\varphi(\{u, v\}_{A}).
\end{array}
$$
Furthermore, $(\alpha_{1}\oplus\alpha_{2})(\Gamma_{\varphi})\subset\Gamma_{\varphi},~(\beta_{1}\oplus\beta_{2})(\Gamma_{\varphi})\subset\Gamma_{\varphi}$ implies
$$(\alpha_{1}\oplus\alpha_{2})(u, \varphi(u))=(\alpha_{1}(u), \alpha_{2}\circ\varphi(u)) \in\Gamma_{\varphi},$$
$$(\beta_{1}\oplus\beta_{2})(u, \varphi(u))=(\beta_{1}(u), \beta_{2}\circ\varphi(u)) \in\Gamma_{\varphi}.$$
which is equivalent to the condition $\alpha_{2}\circ\varphi(u)=\varphi\circ\alpha_{1}(u),$ i.e. $ \alpha_{2}\circ\varphi=\varphi\circ\alpha_{1}$. Similarly, $ \beta_{2}\circ\varphi=\varphi\circ\beta_{1}$. Therefore, $\varphi$ is a
morphism of BiHom-Poisson algebras.
\end{proof}
%%%%%%%%%%%%%%%%%%%%%%%%%%%
%%%%%%%%%%%%%%%%%
\begin{thm}
\label{thm:twist}
Let $(A,\{\cdot,\cdot\},\mu,\alpha,\beta)$ be a (non-BiHom-commutative) BiHom-Poisson algebra and $\alpha',\beta' \colon A \to A$ be
endomorphisms of $A$ such that any two of the maps $\alpha,\beta,\alpha',\beta'$ commute. Then
\[
A_{\alpha',\beta'} = (A,\{\cdot,\cdot\}_{\alpha',\beta'} =\{\cdot,\cdot\}\circ(\alpha'\otimes\beta'),\mu _{\alpha',\beta'} = \mu\circ(\alpha'\otimes\beta'),\alpha\alpha',\beta\beta')
\]
is also a (non-BiHom-commutative) BiHom-Poisson algebra. Moreover suppose that\\ $(B,\{\cdot,\cdot\}', \mu', \gamma,\delta)$ is another BiHom-Poisson algebra and $\gamma',\delta'$ be
endomorphisms of $B$ such that any two of the maps $\gamma,\delta,\gamma',\delta'$ commute.
If $f : (A,\{\cdot,\cdot\},\mu, \alpha,\beta)\rightarrow (B,\{\cdot,\cdot\}', \mu', \gamma,\delta)$
is a morphism such that $f\circ\alpha'=\gamma'\circ f$ and $f\circ\beta'=\delta'\circ f$, then $f : A_{\alpha',\beta'}\rightarrow B_{\gamma',\delta'}$ is also a morphism.
\end{thm}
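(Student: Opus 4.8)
The plan is to verify the three defining axioms of Definition \ref{si} for $A_{\alpha',\beta'}$ one at a time, using the hypothesis that any two of $\alpha,\beta,\alpha',\beta'$ commute to move twisting maps freely through expressions. Write $\nu = \alpha'\otimes\beta'$, so that $\mu_{\alpha',\beta'} = \mu\circ\nu$ and $\{\cdot,\cdot\}_{\alpha',\beta'} = \{\cdot,\cdot\}\circ\nu$, with new structure maps $\alpha\alpha'$ and $\beta\beta'$. First I would note that multiplicativity of $\mu_{\alpha',\beta'}$ and $\{\cdot,\cdot\}_{\alpha',\beta'}$ with respect to $\alpha\alpha'$ and $\beta\beta'$ is immediate from multiplicativity of the original operations together with the commutation hypotheses. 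For axiom (1), I would check BiHom-associativity: compute
\[
\mu_{\alpha',\beta'}\bigl(\mu_{\alpha',\beta'}(x,y),\beta\beta'(z)\bigr)
 = \mu\bigl(\alpha'\mu(\alpha'(x),\beta'(y)),\,\beta'\beta\beta'(z)\bigr)
 = \mu\bigl(\mu(\alpha'^2(x),\alpha'\beta'(y)),\,\beta\beta'^2(z)\bigr),
\]
and similarly expand $\mu_{\alpha',\beta'}(\alpha\alpha'(x),\mu_{\alpha',\beta'}(y,z))$; both reduce, via multiplicativity and commutativity, to $\alpha'^2\beta'^?$-twisted versions of $as_A$ applied to suitable arguments, which vanishes. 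The BiHom-commutativity condition $\mu_{\alpha',\beta'}(\beta\beta'(x),\alpha\alpha'(y)) = \mu_{\alpha',\beta'}(\beta\beta'(y),\alpha\alpha'(x))$ reduces in the same way to the BiHom-commutativity of $\mu$ after pushing $\alpha',\beta'$ inside.

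For axiom (2), I would check BiHom-skew-symmetry of $\{\cdot,\cdot\}_{\alpha',\beta'}$ (again an immediate reduction to that of $\{\cdot,\cdot\}$) and the BiHom-Jacobi identity \eqref{BiHJacob} with $\beta$ replaced by $\beta\beta'$: each term $\{(\beta\beta')^2(x),\{\beta\beta'(y),\alpha\alpha'(z)\}_{\alpha',\beta'}\}_{\alpha',\beta'}$ expands, using the commutation hypotheses, into $\{\beta^2(\cdot),\{\beta(\cdot),\alpha(\cdot)\}\}$ evaluated at $\alpha',\beta'$-twisted arguments, uniformly in the three cyclic terms, so the cyclic sum vanishes by the original Jacobi identity. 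For axiom (3), the BiHom-Leibniz identity \eqref{BHL}, I would expand both sides of
\[
\{\alpha\alpha'\beta\beta'(x),\mu_{\alpha',\beta'}(y,z)\}_{\alpha',\beta'}
 \overset{?}{=}
 \mu_{\alpha',\beta'}\bigl(\{\beta\beta'(x),y\}_{\alpha',\beta'},\beta\beta'(z)\bigr)
 + \mu_{\alpha',\beta'}\bigl(\beta\beta'(y),\{\alpha\alpha'(x),z\}_{\alpha',\beta'}\bigr),
\]
and show that after inserting the definitions and commuting all the $\alpha',\beta'$ factors to the outside, both sides equal the original Leibniz identity \eqref{BHL} applied to $\alpha'\beta'(x)$, $\alpha'\beta'(y)$, $\alpha'\beta'(z)$ (up to a common outer twist). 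The main bookkeeping obstacle here is tracking exactly which powers of $\alpha'$ and $\beta'$ land on each argument on each side; I expect the commutativity hypothesis to make everything match, but this is the step where the identity could a priori fail if the twists did not commute, so it deserves the most careful calculation.

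Finally, for the morphism statement: given $f$ a morphism with $f\alpha' = \gamma'f$ and $f\beta' = \delta'f$, I would check $f\mu_{\alpha',\beta'} = \mu'_{\gamma',\delta'}f^{\otimes 2}$ and $f\{\cdot,\cdot\}_{\alpha',\beta'} = \{\cdot,\cdot\}'_{\gamma',\delta'}f^{\otimes 2}$ directly: $f\mu(\alpha'(x),\beta'(y)) = \mu'(f\alpha'(x),f\beta'(y)) = \mu'(\gamma'f(x),\delta'f(y)) = \mu'_{\gamma',\delta'}(f(x),f(y))$, and likewise for the bracket; compatibility with the twisting maps follows since $f(\alpha\alpha') = \gamma f\alpha' = \gamma\gamma'f$ and similarly $f(\beta\beta') = \delta\delta'f$. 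This part is routine and presents no obstacle. Throughout, if $(A,\{\cdot,\cdot\},\mu,\alpha,\beta)$ is genuinely BiHom-commutative the argument above already shows $A_{\alpha',\beta'}$ is too, and if one drops BiHom-commutativity the remaining axioms are untouched, giving the non-BiHom-commutative version.
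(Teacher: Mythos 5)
Your plan follows essentially the same route as the paper's proof: push $\alpha',\beta'$ through the operations using multiplicativity and the commutation hypotheses so that each twisted axiom reduces to the corresponding original identity at twisted arguments (e.g. $as_{A_{\alpha',\beta'}}(x,y,z)=as_A(\alpha'^2(x),\alpha'\beta'(y),\beta'^2(z))$, and each BiHom-Jacobi term becomes the original one at $\alpha'\beta'^2(x),\alpha'\beta'^2(y),\alpha'\beta'^2(z)$), and the morphism claim is checked by the same direct computation as in the paper. The only minor imprecision is in the Leibniz step, which reduces to \eqref{BHL} evaluated at $(\alpha'^2\beta'(x),\alpha'\beta'(y),\beta'^2(z))$ with no outer twist rather than at $(\alpha'\beta'(x),\alpha'\beta'(y),\alpha'\beta'(z))$; the bookkeeping you flagged does work out, so the proposal is correct.
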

%%%%%%%%%%%%%%%%%

\begin{proof} Let give the proof in BiHom-commutativity case.
We only prove item 1. of Definition \ref{si}, item 2. and item 3. can be proved similarly.

For all $x,y,z\in A$ we have
$$\begin{array}{llll}
&&as_{A_{\alpha',\beta'}}(x,y,z)\\
&=&\mu_{\alpha',\beta'}(\mu_{\alpha',\beta'}(x,y),\beta\beta'(z))-\mu_{\alpha',\beta'}(\alpha\alpha'(x),\mu_{\alpha',\beta'}(y,z))\\
&=&\mu_{\alpha',\beta'}(\mu(\alpha'(x),\beta'(y)),\beta\beta'(z))-\mu_{\alpha',\beta'}(\alpha\alpha'(x),\mu(\alpha'(y),\beta'(z)))\\
&=&\mu(\mu(\alpha'^{2}(x),\alpha'\beta'(y)),\beta\beta'^{2}(z))-\mu(\alpha\alpha'^{2}(x),\mu(\alpha'\beta'(y),\beta'^{2}(z)))\\
&=&as_{A}(\alpha'^{2}(x),\alpha'\beta'(y),\beta'^{2}(z))=0~(BiHom-associativity~condition~of~A).
\end{array}$$
Then $(A,\mu _{\alpha',\beta'},\alpha\alpha',\beta\beta')$ is a BiHom-associative algebra.

Now, for all $x,y\in A$ we have:
$$\begin{array}{llllllll}
\mu_{\alpha',\beta'}(\beta\beta'(x),\alpha\alpha'(y))&=&\mu(\alpha'\beta'\beta(x),\alpha\alpha'\beta'(y))\\
&=&\mu(\beta(\alpha'\beta'(x)),\alpha(\alpha'\beta'(y)))\\
&=&\mu(\beta(\alpha'\beta'(y)),\alpha(\alpha'\beta'(x)))~(BiHom-commutativity~in~A).\\
&=&\mu_{\alpha',\beta'}(\beta\beta'(y),\alpha\alpha'(x)).
\end{array}$$
Then $(A,\mu _{\alpha',\beta'},\alpha\alpha',\beta\beta')$ is a BiHom-commutative algebra.

The second part is proved as follows: $\forall x,y\in A$
$$\begin{array}{llllllll}
f\{x,y\}_{\alpha',\beta'}&=&f\{\alpha'(x),\beta'(y)\}\\
&=&\{f\alpha'(x),f\beta'(y)\}'\\
&=&\{\gamma'f(x),\delta' f(y)\}'\\
&=&\{f(x),f(y)\}'_{\gamma',\delta'}.
\end{array}$$
In the same way we have
$f\mu_{\alpha',\beta'}(x,y)=\mu'_{\gamma',\delta'}(f(x),f(y)).$

This finishes the proof.
\end{proof}
Taking $\alpha'=\alpha^{k},~\beta'=\beta^{l}$, yields the following statement.
\begin{cor}
Let $(A,\{\cdot,\cdot\},\mu,\alpha,\beta)$ be a (non-BiHom-commutative) BiHom-Poisson algebra. Then
\[
A_{\alpha^{k},\beta^{l}} = (A,\{\cdot,\cdot\}_{\alpha^{k},\beta^{l}} =\{\cdot,\cdot\}\circ(\alpha^{k}\otimes\beta^{l}),\mu _{\alpha^{k},\beta^{l}} = \mu\circ(\alpha^{k}\otimes\beta^{l}),\alpha^{k+1},\beta^{l+1})
\]
is also a (non-BiHom-commutative) BiHom-Poisson algebra.
\end{cor}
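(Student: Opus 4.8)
The plan is to obtain this corollary as an immediate specialization of Theorem~\ref{thm:twist}, applied with $\alpha'=\alpha^{k}$ and $\beta'=\beta^{l}$. Consequently the only thing to verify is that this choice meets the two hypotheses of that theorem, namely that $\alpha^{k}$ and $\beta^{l}$ are endomorphisms of $A$, and that any two of the four maps $\alpha,\beta,\alpha^{k},\beta^{l}$ commute.

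For the first point, recall that in Definition~\ref{si} the underlying BiHom-associative algebra $(A,\mu,\alpha,\beta)$ and the underlying BiHom-Lie algebra $(A,\{\cdot,\cdot\},\alpha,\beta)$ are both multiplicative, so $\alpha$ and $\beta$ are endomorphisms of $A$ compatible with both $\mu$ and $\{\cdot,\cdot\}$. Hence their iterates $\alpha^{k}$ and $\beta^{l}$, being composites of such endomorphisms, are again endomorphisms of $A$ for all $k,l\geq 0$.

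For the second point, the defining axiom of a BiHom-module gives $\alpha\beta=\beta\alpha$. A one-line induction on the exponents then shows $\alpha^{i}$ and $\beta^{j}$ commute for all $i,j\geq 0$; in particular $\alpha$ commutes with $\beta^{l}$, $\beta$ commutes with $\alpha^{k}$, and $\alpha^{k}$ commutes with $\beta^{l}$, while $\alpha$ obviously commutes with $\alpha^{k}$ and $\beta$ with $\beta^{l}$. Thus every pair among $\alpha,\beta,\alpha^{k},\beta^{l}$ commutes.

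Having checked the hypotheses, Theorem~\ref{thm:twist} applies and yields that $A_{\alpha^{k},\beta^{l}}=(A,\{\cdot,\cdot\}\circ(\alpha^{k}\otimes\beta^{l}),\mu\circ(\alpha^{k}\otimes\beta^{l}),\alpha\cdot\alpha^{k},\beta\cdot\beta^{l})$ is a (non-BiHom-commutative) BiHom-Poisson algebra; since $\alpha\cdot\alpha^{k}=\alpha^{k+1}$ and $\beta\cdot\beta^{l}=\beta^{l+1}$, this is exactly the $5$-tuple in the statement. There is no genuine obstacle here: all the real work lies in Theorem~\ref{thm:twist}, and the present corollary only requires the elementary observations that powers of $\alpha$ and $\beta$ are endomorphisms and commute with one another, both of which follow at once from multiplicativity and the BiHom-module axiom.
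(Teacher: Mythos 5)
Your proposal is correct and matches the paper's own route: the paper obtains this corollary precisely by specializing Theorem~\ref{thm:twist} with $\alpha'=\alpha^{k}$ and $\beta'=\beta^{l}$, which is exactly what you do. Your extra verification that the powers are endomorphisms and that all four maps pairwise commute is a harmless (and correct) elaboration of what the paper leaves implicit.
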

Taking $\alpha=\beta=id$, yields the following statement.
\begin{cor}
Let $(A,\{\cdot,\cdot\},\mu)$ be a (non-commutative) Poisson algebra and $\alpha,\beta \colon A \to A$ be two
endomorphisms such that $\alpha\beta=\beta\alpha$. Then
\[
A_{\alpha,\beta} = (A,\{\cdot,\cdot\}_{\alpha,\beta} =\{\cdot,\cdot\}\circ(\alpha\otimes\beta),\mu _{\alpha,\beta} = \mu\circ(\alpha\otimes\beta),\alpha,\beta)
\]
is also a (non-BiHom-commutative) BiHom-Poisson algebra.
\end{cor}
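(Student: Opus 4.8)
The plan is to obtain the statement as the special case of Theorem~\ref{thm:twist} in which the ambient twisting maps are both the identity, so that essentially all of the work has already been done there. First I would record the (entirely formal) observation that a (non-commutative) Poisson algebra $(A,\{\cdot,\cdot\},\mu)$ is literally the same datum as a (non-BiHom-commutative) BiHom-Poisson algebra $(A,\{\cdot,\cdot\},\mu,\mathrm{id}_A,\mathrm{id}_A)$: setting $\alpha=\beta=\mathrm{id}_A$ in Definition~\ref{si}, the multiplicativity conditions become automatic, \eqref{BiHom ass identity} collapses to ordinary associativity, the BiHom-commutativity condition collapses to commutativity, the BiHom-skew-symmetry and BiHom-Jacobi identities \eqref{BiHJacob} collapse to the usual skew-symmetry and Jacobi identity, and the BiHom-Leibniz identity \eqref{BHL} collapses to the classical Leibniz rule.

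Next I would verify the hypotheses of Theorem~\ref{thm:twist} for this BiHom-Poisson algebra with the choice of twisting endomorphisms $\alpha'=\alpha$ and $\beta'=\beta$: the maps $\alpha,\beta$ are endomorphisms of $A$ by assumption, and any two of the four maps $\mathrm{id}_A,\mathrm{id}_A,\alpha,\beta$ commute, since $\mathrm{id}_A$ commutes with everything and $\alpha\beta=\beta\alpha$. Theorem~\ref{thm:twist} then yields at once that
\[
A_{\alpha,\beta}=\bigl(A,\ \{\cdot,\cdot\}\circ(\alpha\otimes\beta),\ \mu\circ(\alpha\otimes\beta),\ \mathrm{id}_A\alpha,\ \mathrm{id}_A\beta\bigr)=(A,\{\cdot,\cdot\}_{\alpha,\beta},\mu_{\alpha,\beta},\alpha,\beta)
\]
is a (non-BiHom-commutative) BiHom-Poisson algebra, which is exactly the assertion; if the ground Poisson algebra is commutative one runs the same argument through the BiHom-commutative version of Theorem~\ref{thm:twist}.

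There is essentially no obstacle here; the only mildly delicate point is the first step, namely confirming that each defining identity of Definition~\ref{si} really does degenerate to the corresponding classical Poisson identity when both twisting maps are the identity, and that no further conditions remain to be checked. If one prefers a self-contained argument rather than citing Theorem~\ref{thm:twist} as a black box, one can simply rerun its short computations with the ambient maps equal to $\mathrm{id}_A$: for instance $as_{A_{\alpha,\beta}}(x,y,z)=as_A(\alpha^2(x),\alpha\beta(y),\beta^2(z))=0$ by associativity of $\mu$, and the skew-symmetry, Jacobi and Leibniz identities for $\{\cdot,\cdot\}_{\alpha,\beta}$ follow verbatim, using throughout that $\alpha$ and $\beta$ are Poisson endomorphisms and that $\alpha\beta=\beta\alpha$.
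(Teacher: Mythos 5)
Your proposal is correct and coincides with the paper's own argument: the corollary is obtained there precisely by viewing $(A,\{\cdot,\cdot\},\mu)$ as the BiHom-Poisson algebra $(A,\{\cdot,\cdot\},\mu,\mathrm{id}_A,\mathrm{id}_A)$ and specializing Theorem~\ref{thm:twist} with twisting maps $\alpha'=\alpha$, $\beta'=\beta$. Your preliminary check that the identities of Definition~\ref{si} collapse to the classical Poisson axioms when both structure maps are the identity, and that the commutation hypotheses hold, is exactly the (implicit) content of the paper's one-line deduction.
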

\begin{defn}
\label{def:deform}
Let $(A,\{\cdot,\cdot\},\mu)$ be a non-commutative Poisson algebra.
\begin{enumerate}
\item
Given two commuting morphisms $\alpha,\beta \colon A \to A$, the triple $A'_{\alpha,\beta}=(A,\{\cdot,\cdot\}_{\alpha,\beta} =\{\cdot,\cdot\}\circ(\alpha\otimes\beta),\mu _{\alpha,\beta} = \mu\circ(\alpha\otimes\beta))$ is called the $(\alpha,\beta)$-twisting of $A$.  A twisting of $A$ is a $(\alpha,\beta)$-twisting of $A$ for some morphisms $\alpha,\beta \colon A \to A$.
\item
The $(\alpha,\beta)$-twisting $A'_{\alpha,\beta}$ of $A$ is called trivial if
\[
\{\cdot,\cdot\}_{\alpha,\beta}= 0 = \mu_{\alpha,\beta}.
\]
$A'_{\alpha,\beta}$ is called non-trivial if either $\{\cdot,\cdot\} \neq 0$ or $\mu_{\alpha,\beta} \neq 0$.
\item
$A$ is called rigid if every twisting of $A$ is either trivial or isomorphic to $A$.
\end{enumerate}
\end{defn}

%%%%%%%%%%%%%%%%%%%%%%%%%
\begin{prop}
\label{prop:nonrigidity}
Let $(A,\{\cdot,\cdot\},\mu)$ be a non-commutative Poisson algebra.  Suppose there exists two commuting morphisms $\alpha,\beta \colon A \to A$ such that either:
\begin{enumerate}
\item
$\mu_{\alpha,\beta} = \mu\circ(\alpha\otimes\beta)$ is not BiHom-associative or
\item
$\{\cdot,\cdot\}_{\alpha,\beta}=\{\cdot,\cdot\}\circ(\alpha\otimes\beta)$ does not satisfy the BiHom-Jacobi identity.
\end{enumerate}
Then $A$ is not rigid.
\end{prop}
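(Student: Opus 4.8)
The plan is to use the twisting $A'_{\alpha,\beta}$ that the hypothesis hands us as an explicit witness to non-rigidity: by Definition~\ref{def:deform}(3) it is enough to exhibit one twisting of $A$ that is neither trivial nor isomorphic to $A$, and I will show $A'_{\alpha,\beta}$ is such a twisting. Both facts follow at once from whichever of the alternatives (1), (2) is in force.

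First I would eliminate triviality. If $A'_{\alpha,\beta}$ were trivial then, by Definition~\ref{def:deform}(2), we would have $\mu_{\alpha,\beta}=0$ and $\{\cdot,\cdot\}_{\alpha,\beta}=0$. But the identically zero multiplication is (BiHom-)associative and the identically zero bracket satisfies the (BiHom-)Jacobi identity, since every term occurring in those identities then vanishes; so neither (1) nor (2) could hold. As one of them does by hypothesis, $A'_{\alpha,\beta}$ is non-trivial.

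Next I would rule out $A'_{\alpha,\beta}\cong A$. An isomorphism $\phi\colon A\to A'_{\alpha,\beta}$ is a linear bijection with $\phi\circ\mu=\mu_{\alpha,\beta}\circ\phi^{\otimes2}$ and $\phi\circ\{\cdot,\cdot\}=\{\cdot,\cdot\}_{\alpha,\beta}\circ\phi^{\otimes2}$. Applying $\phi$ to the associativity of $\mu$, to the skew-symmetry and Jacobi identity of $\{\cdot,\cdot\}$, and to the Leibniz identity on $A$ — and using surjectivity of $\phi$ to express an arbitrary triple of elements of $A'_{\alpha,\beta}$ as $\phi(x),\phi(y),\phi(z)$ — transports these identities to $A'_{\alpha,\beta}$, so $A'_{\alpha,\beta}$ would itself be a non-commutative Poisson algebra. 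In particular $\mu_{\alpha,\beta}$ would be associative and $\{\cdot,\cdot\}_{\alpha,\beta}$ would satisfy the Jacobi identity, contradicting (1) and (2) respectively. Hence $A'_{\alpha,\beta}\not\cong A$.

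Combining the two steps, $A'_{\alpha,\beta}$ is a twisting of $A$ that is neither trivial nor isomorphic to $A$, so $A$ is not rigid. The argument is soft; the points that merit care are (i) that identically vanishing operations automatically satisfy every structural identity, so the trivial branch is genuinely excluded by either hypothesis, and (ii) that ``$A'_{\alpha,\beta}\cong A$'' is read as a linear bijection intertwining the two bilinear operations, with no condition imposed on the twisting maps — which is what legitimizes the transport of structure. One may further note, via the Corollary immediately preceding Definition~\ref{def:deform}, that $A'_{\alpha,\beta}$ always underlies a BiHom-Poisson algebra with twisting maps $\alpha,\beta$; thus the identities that fail in (1) and (2) are necessarily the \emph{untwisted} ones, and these are exactly what the transport argument contradicts. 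I foresee no substantive obstacle beyond this bookkeeping.
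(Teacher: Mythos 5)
Your proposal is correct and follows essentially the same route as the paper: exhibit $A'_{\alpha,\beta}$ as a witness, noting it cannot be trivial (zero operations satisfy every identity) and cannot be isomorphic to $A$ (an isomorphism would transport associativity and the Jacobi identity to $\mu_{\alpha,\beta}$ and $\{\cdot,\cdot\}_{\alpha,\beta}$). You merely spell out the transport-of-structure step and the reading of the hypotheses that the paper's two-sentence proof leaves implicit.
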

%%%%%%%%%%%%%%%%%%%%%%%%%

\begin{proof}
The $(\alpha,\beta)$-twisting $A'_{\alpha,\beta}$ is non-trivial, since otherwise $\mu_{\alpha,\beta}$ would be BiHom-associative and $\{\cdot,\cdot\}_{\alpha,\beta}$ would satisfy the BiHom-Jacobi identity. For the same reason, the $(\alpha,\beta)$-twisting $A'_{\alpha,\beta}$ cannot be isomorphic to $A$.
\end{proof}
\begin{rmk}
\label{ex:linear}
Let $(\mathfrak{g},[\cdot,\cdot])$ be a finite-dimensional Lie algebra, and let $(S(\mathfrak{g}),\mu)$ be its symmetric algebra.  If $\{e_i\}_{i=1}^n$ is a basis of $\mathfrak{g}$, then $S(\mathfrak{g})$ is the polynomial algebra $\mathbb{K}[e_1,\ldots,e_n]$.  Suppose the structure constants for $\mathfrak{g}$ are given by
\[
[e_i,e_j] = \sum_{k=1}^n c_{ij}^k e_k.
\]
Then the symmetric algebra $S(\mathfrak{g})$ becomes a Poisson algebra with the Poisson bracket
\begin{equation}
\label{FG}
\{F,G\} = \frac{1}{2} \sum_{i,j,k=1}^n c_{ij}^k e_k \left(\frac{\partial F}{\partial e_i}\frac{\partial G}{\partial e_j} - \frac{\partial F}{\partial e_j}\frac{\partial G}{\partial e_i}\right)
\end{equation}
for $F,G \in S(\mathfrak{g})$.  This Poisson algebra structure on $S(\mathfrak{g})$ is called the linear Poisson structure.  Note that $\{e_i,e_j\} = [e_i,e_j]$.

\end{rmk}
%%%%%%%%%%%%%%%%%%%%%
\begin{ex}[\textbf{$S(\mathfrak{sl}(2))$ is not rigid}]
In this example, we show that the symmetric algebra $(S(\mathfrak{sl}(2)),\mu)$ on the Lie algebra $\mathfrak{sl}(2)$, equipped with the linear Poisson structure \eqref{FG}, is not rigid in the sense of Definition \ref{def:deform}.

The Lie algebra $\mathfrak{sl}(2)$ has a basis $\{e,f,h\}$, with respect to which the Lie bracket is given by
\[
[h,e] = 2e,\quad [h,f] = -2f, \quad [e,f] = h.
\]
To show that $S(\mathfrak{sl}(2)) = \mathbb{K}\langle e,f,h\rangle$ is not rigid, consider the Lie algebra morphisms $\alpha,\beta \colon \mathfrak{sl}(2) \to \mathfrak{sl}(2)$ given by
\[
\begin{array}{llllllllll}
\alpha(e) &=& \lambda e,\quad \alpha(f) &=& \lambda^{-1}f, \quad \alpha(h) &=& h,\\
\beta(e) &=& \gamma e,\quad \beta(f) &=& \gamma^{-1}f, \quad \beta(h) &=& h.
\end{array}
\]
where $\lambda,\gamma \in \mathbb{K}$ is a fixed scalar with $\lambda,\gamma \not= 0,1$.  Denote by $\alpha,\beta \colon S(\mathfrak{sl}(2)) \to S(\mathfrak{sl}(2))$ the extended maps, which is a Poisson algebra morphisms.  By Proposition \ref{prop:nonrigidity}, the Poisson algebra $S(\mathfrak{sl}(2))$ is not rigid if $\mu_{\alpha,\beta} = \mu\circ(\alpha\otimes\beta)$ is not BiHom-associative.  We have
\[
\begin{split}
\mu_{\alpha,\beta}(\mu_{\alpha,\beta}(e,h),h) - \mu_{\alpha,\beta}(e,\mu_{\alpha,\beta}(h,h))
&= \alpha^2(e)\alpha\beta(h)\beta(h) - \alpha(e)\alpha\beta(h)\beta^2(h)\\
&= (\lambda^2 - \lambda)eh^2,
\end{split}
\]
which is not $0$ in the symmetric algebra $S(\mathfrak{sl}(2))$ because $\lambda \not= 0,1$.  Therefore, $\mu_{\alpha,\beta}$ is not BiHom-associative, and the linear Poisson structure on $S(\mathfrak{sl}(2))$ is not rigid.
\qed
\end{ex}
\subsection{BiHom-flexibles structures and admissible BiHom-Poisson algebras}
In this subsection, we introduce BiHom-flexible algebras and prove that 
admissible BiHom-Poisson algebras are BiHom-flexible.
\begin{definition}
Let $ (A, \mu, \alpha,\beta)$ be a BiHom-algebra. Then $A$   is called BiHom-flexible algebra if for any $x, y \in A$
\begin{equation} \label{flex_eq}
\mu(\mu(\beta^{2}(x), \alpha\beta(y)),\beta\alpha^{2}(x))-\mu(\alpha\beta^2(x),\mu(\alpha\beta(y),\alpha^2(x))= 0.
\end{equation}
\end{definition}
\begin{remark}
\begin{enumerate}
\item If $\alpha=\beta=Id,$ then $(A, \mu, \alpha,\beta)$ is reduced to a
flexible algebra $(A, \mu).$
\item If $(A,\mu,\alpha)$ is a Hom-flexible algebra \cite{ms}, then
$(A,\mu,\alpha, \alpha)$ is a BiHom-flexible algebra. Conversely, if
$(A,\mu,\alpha, \beta)$ is a BiHom-flexible algebra such $\alpha$ is injective
and $\alpha=\beta$ then, $(A,\mu,\alpha)$ is a Hom-flexible algebra.
\end{enumerate}
\end{remark}
\begin{lemma} \label{flex_lemma} Let $A = (A, \mu, \alpha,\beta)$ be a BiHom-algebra. The following assertions are equivalent
\begin{enumerate}
\item $A$ is BiHom-flexible.
\item For any $x, y \in A,\quad as_{A}(\beta^{2}(x), \alpha\beta(y), \alpha^{2}(x)) = 0$.
\item For any $x, y, z$
\begin{eqnarray}
\label{homflexible}
as_{A}(\beta^{2}(x),\alpha\beta(y),\alpha^{2}(z)) + as_{A}(\beta^{2}(z),\alpha\beta(y),\alpha^{2}(x)) = 0
\end{eqnarray}
\end{enumerate}
\end{lemma}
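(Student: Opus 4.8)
The plan is to prove the chain of equivalences $(1)\Leftrightarrow(2)\Leftrightarrow(3)$, using the definition of the BiHom-associator $as_A$ in \eqref{BiHom ass identity} to unwind everything into the multiplication $\mu$ alone. First I would establish $(1)\Leftrightarrow(2)$ by simply expanding $as_A(\beta^2(x),\alpha\beta(y),\alpha^2(x))$ according to \eqref{BiHom ass identity}: this gives
\[
as_A(\beta^2(x),\alpha\beta(y),\alpha^2(x)) = \mu\bigl(\mu(\beta^2(x),\alpha\beta(y)),\beta\alpha^2(x)\bigr) - \mu\bigl(\alpha\beta^2(x),\mu(\alpha\beta(y),\alpha^2(x))\bigr),
\]
which is exactly the left-hand side of the BiHom-flexibility identity \eqref{flex_eq} (note $\beta(\alpha^2(x)) = \alpha^2\beta(x)=\beta\alpha^2(x)$ since $\alpha,\beta$ commute, so the twisting factors on the outer slots match up). Hence (1) and (2) are literally the same statement after rewriting, and this step is immediate.

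For $(2)\Rightarrow(3)$, the standard trick is polarization: in the identity $as_A(\beta^2(u),\alpha\beta(y),\alpha^2(u))=0$, replace $u$ by $x+z$ and use bilinearity of $\mu$ (hence of $as_A$ in its first and third arguments) to expand. The terms with $u=x$ twice and $u=z$ twice vanish by (2), leaving precisely the cross terms
\[
as_A(\beta^2(x),\alpha\beta(y),\alpha^2(z)) + as_A(\beta^2(z),\alpha\beta(y),\alpha^2(x)) = 0,
\]
which is \eqref{homflexible}. This uses that $\beta^2$ and $\alpha^2$ are linear and that $as_A$ is linear in the outer two slots, which is clear from \eqref{BiHom ass identity} and bilinearity of $\mu$. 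For the converse $(3)\Rightarrow(2)$, one specializes $z=x$ in \eqref{homflexible}, obtaining $2\,as_A(\beta^2(x),\alpha\beta(y),\alpha^2(x)) = 0$, and since the base field $\mathbb{K}$ has characteristic $0$ (stated in the paper), we may divide by $2$ to recover (2).

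The only mild subtlety — and the step I would be most careful about — is making sure the twisting maps on the boundary slots in \eqref{flex_eq} genuinely coincide with those produced by $as_A(\beta^2(x),\alpha\beta(y),\alpha^2(x))$; this rests entirely on the commutativity relations among $\alpha$ and $\beta$ (here just $\alpha\beta=\beta\alpha$ from the BiHom-module axiom), so I would state that identification explicitly at the start. Everything else is routine bilinear bookkeeping, and no multiplicativity of $\alpha,\beta$ is needed — only linearity and commutativity.
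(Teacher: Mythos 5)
Your proposal is correct and follows essentially the same route as the paper: the equivalence of (1) and (2) is just unwinding the definition of $as_A$ in \eqref{BiHom ass identity}, and (2)$\Leftrightarrow$(3) is the same polarization argument (the paper substitutes $x-z$ where you use $x+z$, an inessential difference). Your explicit remarks on the commuting of $\alpha,\beta$ and on dividing by $2$ in characteristic $0$ for (3)$\Rightarrow$(2) only make precise what the paper leaves implicit.
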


\begin{proof}
The equivalence of the first two assertions follows from the definition. The  assertion  $as_{A}(\beta^{2}(x - z), \alpha\beta(y), \alpha^{2}(x - z)) = 0$ holds by definition and it is equivalent to $as_{A}(\beta^{2}(x),\alpha\beta(y),\alpha^{2}(z)) + as_{A}(\beta^{2}(z),\alpha\beta(y),\alpha^{2}(x)) = 0$ by linearity.
\end{proof}
It is easy to prove the following
\begin{prop}
Let $(A,\mu)$ be a flexible algebra, $\alpha,\beta:A\rightarrow A$ be two commuting morphisms.. Then the Bihom-algebra $(A,\mu_{\alpha,\beta}=\mu(\alpha\otimes \beta),\alpha, \beta)$
is BiHom-flexible.
\end{prop}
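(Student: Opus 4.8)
The plan is to reduce the claim directly to Lemma~\ref{flex_lemma}(2), which says that a BiHom-algebra $(A,\mu',\alpha',\beta')$ is BiHom-flexible precisely when $as_{A}(\beta'^{2}(x),\alpha'\beta'(y),\alpha'^{2}(x))=0$ for all $x,y$. Here the twisted algebra is $A'=(A,\mu_{\alpha,\beta},\alpha,\beta)$ with $\mu_{\alpha,\beta}=\mu\circ(\alpha\otimes\beta)$, so I would first write out $as_{A'}(\beta^{2}(x),\alpha\beta(y),\alpha^{2}(x))$ explicitly in terms of $\mu$. Since $\mu_{\alpha,\beta}(u,v)=\mu(\alpha(u),\beta(v))$, expanding the left factor $\mu_{\alpha,\beta}(\mu_{\alpha,\beta}(\beta^{2}(x),\alpha\beta(y)),\beta\alpha^{2}(x))$ gives, after pushing the outer $\alpha$ and $\beta$ through and using that $\alpha,\beta$ are commuting algebra morphisms of $(A,\mu)$, an expression of the form $\mu(\mu(\alpha^{?}\beta^{?}(x),\alpha^{?}\beta^{?}(y)),\alpha^{?}\beta^{?}(x))$ with specific exponents; similarly for the right factor $\mu_{\alpha,\beta}(\alpha\beta^{2}(x),\mu_{\alpha,\beta}(\alpha\beta(y),\alpha^{2}(x)))$.

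The key observation is that both resulting terms will have matching total powers of $\alpha$ and $\beta$ on each of the three slots, so their difference is exactly $as_{A}\big(\gamma(x),\delta(y),\gamma(x)\big)$ evaluated on suitably twisted arguments $\gamma(x)=\alpha^{a}\beta^{b}(x)$, $\delta(y)=\alpha^{c}\beta^{d}(y)$ — crucially the first and third slots carry the \emph{same} twist of $x$. Because $(A,\mu)$ is flexible in the ordinary sense, $as_{A}(u,v,u)=0$ for all $u,v$, so in particular $as_{A}(\gamma(x),\delta(y),\gamma(x))=0$. Hence $as_{A'}(\beta^{2}(x),\alpha\beta(y),\alpha^{2}(x))=0$, and Lemma~\ref{flex_lemma} gives that $A'$ is BiHom-flexible.

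Concretely the steps are: (i) recall $\mu_{\alpha,\beta}=\mu(\alpha\otimes\beta)$ and that $\alpha\beta=\beta\alpha$ are morphisms; (ii) substitute into $as_{A'}(\beta^{2}(x),\alpha\beta(y),\alpha^{2}(x))$ and simplify each of the two double-products using multiplicativity to move all twisting maps onto the innermost arguments; (iii) check that the two terms coincide with $\mu(\mu(P(x),Q(y)),P(x))$ and $\mu(P(x),\mu(Q(y),P(x)))$ for the same $P,Q$; (iv) conclude the difference is $as_{A}(P(x),Q(y),P(x))=0$ by ordinary flexibility, i.e. $as_A(u,v,u)=0$; (v) invoke Lemma~\ref{flex_lemma}. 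The only point requiring care — and the likely main obstacle — is the bookkeeping of exponents in step (ii)–(iii): one must verify the two sides really do get the identical twist on the $x$-slots (and on the $y$-slot), since if the twists failed to match one could not appeal to flexibility. Given the symmetric way $\beta^{2},\alpha^{2}$ enter the two outer positions in the BiHom-flexibility identity, this matching does work out, but it is the step where a sign or exponent slip would break the argument, so I would carry it out carefully rather than asserting it.
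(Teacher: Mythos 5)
Your proposal is correct and follows essentially the same route as the paper: expand $as_{A_{\alpha,\beta}}(\beta^{2}(x),\alpha\beta(y),\alpha^{2}(x))$, use that $\alpha,\beta$ are commuting morphisms to push all twists onto the inner arguments, and invoke ordinary flexibility $as_{A}(u,v,u)=0$. The exponent bookkeeping you flag does work out exactly as you predict: both terms reduce to $\mu(\mu(\alpha^{2}\beta^{2}(x),\alpha^{2}\beta^{2}(y)),\alpha^{2}\beta^{2}(x))$ and $\mu(\alpha^{2}\beta^{2}(x),\mu(\alpha^{2}\beta^{2}(y),\alpha^{2}\beta^{2}(x)))$, so the difference is $as_{A}(\alpha^{2}\beta^{2}(x),\alpha^{2}\beta^{2}(y),\alpha^{2}\beta^{2}(x))=0$, which is precisely the computation in the paper's proof.
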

\begin{proof}
Let $(A,\mu)$ be a flexible algebra, $\alpha$ and $\beta$ be morphisms of
$(A,\mu)$. Then for all $x,y\in A,$
\begin{eqnarray}
&&\mu_{\alpha,\beta}(\mu_{\alpha,\beta}(\beta^{2}(x), \alpha\beta(y)),\beta\alpha^{2}(x))-\mu_{\alpha,\beta}(\alpha\beta^2(x),\mu_{\alpha,\beta}(\alpha\beta(y),\alpha^2(x)))\nonumber\\
&&=\mu_{\alpha,\beta}(\mu(\alpha\beta^{2}(x), \alpha\beta^2(y)),\beta\alpha^{2}(x))-\mu_{\alpha,\beta}(\alpha\beta^2(x),\mu(\alpha^2\beta(y),\beta\alpha^2(x)))\nonumber\\
&&=\mu(\mu(\alpha^2\beta^{2}(x), \alpha^2\beta^2(y)),\beta^2\alpha^{2}(x))-\mu(\alpha^2\beta^2(x),\mu(\alpha^2\beta^2(y),\beta^2\alpha^2(x)))=0.\nonumber
\end{eqnarray}
Hence the Bihom-algebra $(A,\mu_{\alpha,\beta}=\mu(\alpha\otimes \beta),\alpha, \beta)$ is BiHom-flexible.
\end{proof}
\begin{corollary}
Any BiHom-associative algebra is BiHom-flexible.
\end{corollary}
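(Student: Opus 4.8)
The plan is to reduce the claim to Lemma~\ref{flex_lemma}. By that lemma, a BiHom-algebra $(A,\mu,\alpha,\beta)$ is BiHom-flexible if and only if $as_A(\beta^{2}(x),\alpha\beta(y),\alpha^{2}(x))=0$ for all $x,y\in A$. So the first step is simply to recall that if $A$ is BiHom-associative then, by the defining identity $as_A(u,v,w)=\mu(\mu(u,v),\beta(w))-\mu(\alpha(u),\mu(v,w))=0$, the associator $as_A$ vanishes on \emph{all} triples $u,v,w\in A$. The second step is to specialize $u=\beta^{2}(x)$, $v=\alpha\beta(y)$, $w=\alpha^{2}(x)$, which immediately yields $as_A(\beta^{2}(x),\alpha\beta(y),\alpha^{2}(x))=0$, hence condition (2) of Lemma~\ref{flex_lemma}, and therefore BiHom-flexibility.

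Alternatively one can bypass the lemma and argue directly: expanding $as_A$ at $(\beta^{2}(x),\alpha\beta(y),\alpha^{2}(x))$ gives
\[
\mu(\mu(\beta^{2}(x),\alpha\beta(y)),\beta\alpha^{2}(x))-\mu(\alpha\beta^{2}(x),\mu(\alpha\beta(y),\alpha^{2}(x))),
\]
which is verbatim the left-hand side of \eqref{flex_eq}; BiHom-associativity forces it to vanish. There is no real obstacle here. The only point worth checking is that the arguments $\beta^{2}(x)$, $\alpha\beta(y)$, $\alpha^{2}(x)$ fed into the associator reproduce exactly the twisted terms of \eqref{flex_eq}, and this is clear from $\alpha(\beta^{2}(x))=\alpha\beta^{2}(x)$ and $\beta(\alpha^{2}(x))=\beta\alpha^{2}(x)$ (one does not even need $\alpha\beta=\beta\alpha$ for this identification). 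This finishes the proof.
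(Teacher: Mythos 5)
Your proof is correct and matches the paper's (implicit) argument: since BiHom-associativity makes $as_A$ vanish on all triples, specializing to $(\beta^{2}(x),\alpha\beta(y),\alpha^{2}(x))$ gives exactly the left-hand side of \eqref{flex_eq}, which is condition (2) of Lemma~\ref{flex_lemma}. The paper states the corollary without proof precisely because this is the intended one-line specialization, so there is nothing to add.
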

\begin{proposition} \label{flex_Leibniz_Proposition} A BiHom-algebra $A = (A, \mu, \alpha,\beta)$ is BiHom-flexible if and only if
\begin{equation} \label{flex_Leibniz}
\{\alpha\beta(x),\alpha\beta(y) \diamond \alpha^{2}(z)\} - \{\beta^{2}(x),\alpha\beta(y)\} \diamond \alpha^{2}\beta(z) - \alpha\beta^{2}(y) \diamond \{\alpha\beta(x),\alpha^{2}(z)\}=0,
\end{equation}
where $\{\cdot,\cdot\} = \frac{1}{2}(\mu - \mu\circ(\alpha^{-1}\beta\otimes\alpha\beta^{-1})\circ\tau))$ and $\diamond = \frac{1}{2}(\mu + \mu\circ(\alpha^{-1}\beta\otimes\alpha\beta^{-1})\circ\tau)$.
\end{proposition}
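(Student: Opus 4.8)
The plan is to exploit the polarization $\mu=\{\cdot,\cdot\}+\diamond$ built into the statement. Writing $\nu:=\mu\circ(\alpha^{-1}\beta\otimes\alpha\beta^{-1})\circ\tau$, we have $\diamond=\tfrac12(\mu+\nu)$ and $\{\cdot,\cdot\}=\tfrac12(\mu-\nu)$, so $\mu=\diamond+\{\cdot,\cdot\}$. Using $\alpha\beta=\beta\alpha$ one computes $\nu(\beta(u),\alpha(v))=\mu(\beta(v),\alpha(u))$, hence $\diamond(\beta(u),\alpha(v))=\diamond(\beta(v),\alpha(u))$ and $\{\beta(u),\alpha(v)\}=-\{\beta(v),\alpha(u)\}$: that is, $(A,\diamond,\alpha,\beta)$ is BiHom-commutative and the product of $(A,\{\cdot,\cdot\},\alpha,\beta)$ is BiHom-skew-symmetric. (The presence of $\alpha^{-1},\beta^{-1}$ forces $\alpha,\beta$ to be bijective and $A$ to be multiplicative; then $\diamond$ and $\{\cdot,\cdot\}$ are multiplicative as well.)

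First I would record two auxiliary facts, each with a one-line proof: (i) a BiHom-commutative multiplicative algebra is BiHom-flexible, and (ii) a multiplicative BiHom-algebra whose product is BiHom-skew-symmetric is BiHom-flexible. For (i), start from the left-hand side of \eqref{flex_eq} for $\diamond$, pull $\beta$ inside $\diamond$ by multiplicativity, and apply BiHom-commutativity twice; for (ii), the same computation works, the two applications of BiHom-skew-symmetry contributing two sign changes that cancel. By Lemma \ref{flex_lemma} these give in particular the symmetrized identity \eqref{homflexible} for $\diamond$ and for $\{\cdot,\cdot\}$.

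Next I would substitute $\mu=\diamond+\{\cdot,\cdot\}$ in the form of BiHom-flexibility supplied by Lemma \ref{flex_lemma}(3), i.e. $as_{A}(\beta^{2}(x),\alpha\beta(y),\alpha^{2}(z))+as_{A}(\beta^{2}(z),\alpha\beta(y),\alpha^{2}(x))=0$. Expanding each associator, it splits as $as_{\diamond}+as_{\{\cdot,\cdot\}}+\Xi$, where the cross term
\[
\Xi(a,b,c)=\{\diamond(a,b),\beta(c)\}-\{\alpha(a),\diamond(b,c)\}+\diamond(\{a,b\},\beta(c))-\diamond(\alpha(a),\{b,c\})
\]
collects the four mixed summands. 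By the auxiliary facts (i)--(ii), the $as_{\diamond}$- and $as_{\{\cdot,\cdot\}}$-contributions to the symmetrized sum vanish, so $A$ is BiHom-flexible if and only if
\[
\Xi(\beta^{2}(x),\alpha\beta(y),\alpha^{2}(z))+\Xi(\beta^{2}(z),\alpha\beta(y),\alpha^{2}(x))=0.
\]

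It remains to reduce this last identity to \eqref{flex_Leibniz}. Moving the outer twists in and out by multiplicativity, and using BiHom-commutativity of $\diamond$ and BiHom-skew-symmetry of $\{\cdot,\cdot\}$ to bring $x$ (respectively $z$) into the outer slot of each of the eight mixed terms, the terms pair off and the sum collapses onto \eqref{flex_Leibniz} up to an overall nonzero scalar coming from the normalizing $\tfrac12$'s — exactly mirroring the classical fact that an algebra is flexible precisely when its commutator acts as a derivation of its anticommutator. Since every manipulation is reversible, the converse direction follows at once. I expect this final reduction — the bookkeeping that forces the mixed terms to reassemble in precisely the twisted pattern of \eqref{flex_Leibniz} — to be the only real difficulty; there is no conceptual obstacle, the content being carried entirely by the polarization together with the observation that both halves $\diamond$ and $\{\cdot,\cdot\}$ of $\mu$ are themselves BiHom-flexible.
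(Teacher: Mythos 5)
Your reduction is sound up to its decisive last step, and up to that point it is a genuinely different organization from the paper's proof (the paper simply expands the left side of \eqref{flex_Leibniz} into twelve $\mu$-terms, regroups them into six BiHom-associators, and invokes Lemma~\ref{flex_lemma}, getting the converse by setting $x=z$). Indeed, with $\mu=\{\cdot,\cdot\}+\diamond$, the BiHom-commutativity of $\diamond$, the BiHom-skew-symmetry of $\{\cdot,\cdot\}$ and their multiplicativity do make both halves BiHom-flexible, so by Lemma~\ref{flex_lemma} the algebra is BiHom-flexible if and only if the symmetrized cross-term $\Sigma(x,y,z):=\Xi(\beta^{2}(x),\alpha\beta(y),\alpha^{2}(z))+\Xi(\beta^{2}(z),\alpha\beta(y),\alpha^{2}(x))$ vanishes identically. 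The gap is your claim that $\Sigma$ ``collapses onto \eqref{flex_Leibniz} up to an overall nonzero scalar'': this is false, and it is exactly where the content of the proposition sits. Test it at $\alpha=\beta=\mathrm{id}$, a legitimate instance: there $\Xi(a,b,c)=\tfrac12\bigl(as_A(a,b,c)+as_A(c,b,a)\bigr)$, so $\Sigma(x,y,z)=as_A(x,y,z)+as_A(z,y,x)$ involves only the four monomials $(xy)z$, $x(yz)$, $(zy)x$, $z(yx)$, whereas expanding the left side $L(x,y,z)$ of \eqref{flex_Leibniz} produces twelve independent monomials of the free algebra; no scalar multiple is possible. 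What is actually true (and what the paper's twelve-term computation establishes in the twisted setting) is $4L(x,y,z)=-\Sigma(x,y,z)-\Sigma(x,z,y)+\Sigma(y,x,z)$, i.e.\ $4L$ is an alternating combination of three symmetrized pairs (six associators $as_A(\beta^{2}(\cdot),\alpha\beta(\cdot),\alpha^{2}(\cdot))$), not a multiple of one of them.

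Because the asserted collapse is wrong, your closing sentence ``since every manipulation is reversible, the converse follows at once'' does not go through either. From the correct relation, $\Sigma\equiv 0$ does imply $L\equiv 0$, but $L\equiv 0$ returns flexibility only after a further argument: specializing $x=z$ in \eqref{flex_Leibniz}, the terms $-\Sigma(x,x,y)+\Sigma(y,x,x)$ cancel and one is left with $4L(x,y,x)=-2\,as_A(\beta^{2}(x),\alpha\beta(y),\alpha^{2}(x))$, which is the paper's converse step; nothing of the sort is present in your argument. So to complete the proof along your lines you must (a) actually carry out the expansion relating $L$ to the twisted associators --- your polarization does not let you avoid this bookkeeping, it only repackages it --- and (b) obtain the converse via the $x=z$ specialization (or an equivalent argument) rather than by ``reversibility''. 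A minor point you share with the paper: the whole computation tacitly needs $\alpha,\beta$ bijective and multiplicative with respect to $\mu$, which you correctly flag.
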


\begin{proof}
Since $\{\cdot,\cdot\} = \frac{1}{2}(\mu - \mu\circ(\alpha^{-1}\beta\otimes\alpha\beta^{-1})\circ\tau))$ and $\diamond = \frac{1}{2}(\mu + \mu\circ(\alpha^{-1}\beta\otimes\alpha\beta^{-1})\circ\tau)$, by expansion in terms of $\mu$
$$\begin{array}{llllll}
&&4 \left(\{\alpha\beta(x),\alpha\beta(y) \diamond \alpha^{2}(z)\} - \{\beta^{2}(x),\alpha\beta(y)\} \diamond \alpha^{2}\beta(z) - \alpha\beta^{2}(y) \diamond \{\alpha\beta(x),\alpha^{2}(z)\}\right)\\
&=&\mu(\alpha\beta^{2}(x)),\mu(\alpha\beta(y),\alpha^{2}(z)))
-\mu(\mu(\beta^{2}(y),\alpha\beta(z)),\beta\alpha^{2}(x)))
+\mu(\alpha\beta^{2}(x)),\mu(\alpha\beta(z),\alpha^{2}(y))) \\
&-&\mu(\mu(\beta^{2}(z),\alpha\beta(y)),\beta\alpha^{2}(x)))
-\mu(\mu(\beta^{2}(x),\alpha\beta(y)),\beta\alpha^{2}(z)))
+\mu(\alpha\beta^{2}(z)),\mu(\alpha\beta(x),\alpha^{2}(y)))\\
&+&\mu(\mu(\beta^{2}(y),\alpha\beta(x)),\beta\alpha^{2}(z)))
-\mu(\alpha\beta^{2}(z)),\mu(\alpha\beta(y),\alpha^{2}(x)))
-\mu(\alpha\beta^{2}(y)),\mu(\alpha\beta(x),\alpha^{2}(z)))\\
&+&\mu(\mu(\beta^{2}(x),\alpha\beta(z)),\beta\alpha^{2}(y)))
+\mu(\alpha\beta^{2}(y)),\mu(\alpha\beta(z),\alpha^{2}(x)))
-\mu(\mu(\beta^{2}(z),\alpha\beta(x)),\beta\alpha^{2}(y)))\\
&=& -as_A(\beta^{2}(x),\alpha\beta(y),\alpha^{2}(z)) - as_A(\beta^{2}(z),\alpha\beta(y),\alpha^{2}(x)) - as_A(\beta^{2}(x),\alpha\beta(z),\alpha^{2}(y))\\
&-& as_A(\beta^{2}(y),\alpha\beta(z),\alpha^{2}(x))  + as_A(\beta^{2}(y),\alpha\beta(x),\alpha^{2}(z)) + as_A(\beta^{2}(z),\alpha\beta(x),\alpha^{2}(y))\\
&=&0~(By~ Lemma \eqref{flex_lemma}).
\end{array}$$
Conversely, assume we have the condition \eqref{flex_Leibniz}. By setting $x = z$, one
gets\\ $as_{A}(\beta^{2}(x), \alpha\beta(y), \alpha^{2}(x)) = 0$. Therefore $A$ is BiHom-flexible.
\end{proof}
Let's give the notion of an admissible BiHom-Poisson algebras.
%%%%%%%%%%%%%%%%%%%%%%
\begin{defn}\cite{hadimi}
\label{def:admissible}
Let $(A,\mu,\alpha,\beta)$ be a BiHom-algebra.  Then $A$ is called an admissible BiHom-Poisson algebra if it satisfies
\begin{eqnarray}\label{admissibility}
&&as_A(\beta(x),\alpha(y),\alpha^{2}(z)) = \frac{1}{3}\Big\lbrace\mu(\mu(\beta(x),\alpha\beta(z)),\alpha^{2}(y))) - \mu(\mu(\beta^{2}(z),\alpha(x)),\alpha^{2}(y))\nonumber\\
&&+ \mu(\mu(\beta(y),\alpha\beta(z)),\alpha^{2}(x)) - \mu(\mu(\beta(y),\alpha(x)),\beta\alpha^{2}(z))\Big\rbrace
\end{eqnarray}
\end{defn}
It is observed that if $\alpha$ and $\beta$ are inversible, then
(\ref{admissibility}) is equivalent to
 \begin{eqnarray}
 &&as_A(x,y,z) = \frac{1}{3}\Big\lbrace\mu(\mu(x,\alpha^{-1}\beta(z)),\alpha(y))) - \mu(\mu(\alpha^{-2}\beta^{2}(z),\alpha\beta^{-1}(x)),\alpha(y))\nonumber\\
&&+ \mu(\mu(\alpha^{-1}\beta(y),\alpha^{-1}\beta(z)),\alpha^2\beta^{-1}(x)) - \mu(\mu(\alpha^{-1}\beta(y),\alpha^{-1}\beta(x)),\beta(z))\Big\rbrace
 \end{eqnarray}
\begin{prop}
\label{lem:flexible}
Every admissible BiHom-Poisson algebra $(A,\mu,\alpha,\beta)$ is BiHom-flexible, i.e.,
\begin{equation}
\label{homflexible}
as_A(\beta^{2}(x),\alpha\beta(y),\alpha^{2}(z)) + as_A(\beta^{2}(z),\alpha\beta(y),\alpha^{2}(x)) = 0
\end{equation}
for all $x,y,z \in A$.
\end{prop}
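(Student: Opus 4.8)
The plan is to apply the admissibility identity \eqref{admissibility} twice and add the two resulting equations so that the whole right-hand side cancels. Since \eqref{admissibility} holds for \emph{all} elements of $A$, I am free to feed it arbitrary elements; no invertibility or multiplicativity of $\alpha,\beta$ is needed, only the relation $\alpha\beta=\beta\alpha$ together with ordinary composition of the maps $\alpha$ and $\beta$.

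First I would substitute $(x,y,z)\mapsto(\beta(x),\beta(y),z)$ into \eqref{admissibility}; after simplifying iterated maps ($\beta\circ\beta=\beta^{2}$, $\alpha^{2}\circ\beta=\alpha^{2}\beta$, and so on), the left-hand side becomes $as_A(\beta^{2}(x),\alpha\beta(y),\alpha^{2}(z))$ and the right-hand side becomes $\frac{1}{3}$ times a sum of four cubic terms of the shape $\mu(\mu(-,-),-)$. Interchanging the letters $x$ and $z$ in this identity (equivalently, substituting $(x,y,z)\mapsto(\beta(z),\beta(y),x)$ into \eqref{admissibility}) gives a second identity whose left-hand side is $as_A(\beta^{2}(z),\alpha\beta(y),\alpha^{2}(x))$. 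Adding the two identities therefore amounts to symmetrizing that four-term right-hand side over $x$ and $z$.

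When the eight cubic terms are written out, they cancel in four pairs. Two of the pairs cancel verbatim: the term $\mu(\mu(\beta^{2}(x),\alpha\beta(z)),\alpha^{2}\beta(y))$ coming from the first identity occurs with the opposite sign in the second, and likewise for $\mu(\mu(\beta^{2}(z),\alpha\beta(x)),\alpha^{2}\beta(y))$. The other two pairs cancel once one invokes $\alpha^{2}\beta=\beta\alpha^{2}$ (an immediate consequence of $\alpha\beta=\beta\alpha$), which identifies $\mu(\mu(\beta^{2}(y),\alpha\beta(z)),\alpha^{2}\beta(x))$ with $\mu(\mu(\beta^{2}(y),\alpha\beta(z)),\beta\alpha^{2}(x))$ and $\mu(\mu(\beta^{2}(y),\alpha\beta(x)),\alpha^{2}\beta(z))$ with $\mu(\mu(\beta^{2}(y),\alpha\beta(x)),\beta\alpha^{2}(z))$. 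Hence $3\bigl(as_A(\beta^{2}(x),\alpha\beta(y),\alpha^{2}(z))+as_A(\beta^{2}(z),\alpha\beta(y),\alpha^{2}(x))\bigr)=0$, and since $\mathbb{K}$ has characteristic $0$ this is the claimed identity; by Lemma \ref{flex_lemma} it is precisely the assertion that $A$ is BiHom-flexible.

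The only real obstacle is bookkeeping: one must match up the eight cubic monomials correctly and must not forget to use $\alpha\beta=\beta\alpha$ in the last two cancellations. Conceptually there is nothing deep — the admissibility identity is designed so that its antisymmetrization over the outer slots $x$ and $z$ annihilates all the correction terms and leaves only the flexibility relation.
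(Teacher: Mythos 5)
Your proposal is correct and follows essentially the same route as the paper: apply the admissibility identity to the triple $(\beta(x),\beta(y),z)$, swap $x$ and $z$, and add, whereupon the eight cubic terms on the right cancel in pairs (two verbatim, two after using $\alpha\beta=\beta\alpha$ to rewrite $\alpha^{2}\beta$ as $\beta\alpha^{2}$), which is exactly the anti-symmetry-in-$x,z$ observation the paper uses. The only cosmetic difference is that you clear the factor $\tfrac13$ and divide at the end, which is harmless in characteristic $0$.
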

%%%%%%%%%%%%%%%%%%%%%%
\begin{proof}
The required identity \eqref{homflexible} follows immediately from the defining identity \eqref{admissibility}, in which the right-hand side is anti-symmetric in $x$ and $z$.

$\begin{array}{llllll}&&as_A(\beta^{2}(x),\alpha\beta(y),\alpha^{2}(z)) + as_A(\beta^{2}(z),\alpha\beta(y),\alpha^{2}(x))\\&=& \frac{1}{3}\Big\lbrace\mu(\mu(\beta^{2}(x),\alpha\beta(z)),\beta\alpha^{2}(y))) - \mu(\mu(\beta^{2}(z),\alpha\beta(x)),\beta\alpha^{2}(y)))\\&&+ \mu(\mu(\beta^{2}(y),\alpha\beta(z)),\beta\alpha^{2}(x))) -\mu(\mu(\beta^{2}(y),\alpha\beta(x)),\beta\alpha^{2}(z)))\\&&+\mu(\mu(\beta^{2}(z),\alpha\beta(x)),\beta\alpha^{2}(y))) - \mu(\mu(\beta^{2}(x),\alpha\beta(z)),\beta\alpha^{2}(y)))\\&&+ \mu(\mu(\beta^{2}(y),\alpha\beta(x)),\beta\alpha^{2}(z))) - \mu(\mu(\beta^{2}(y),\alpha\beta(z)),\beta\alpha^{2}(x)))\Big\rbrace=0.\end{array}$
\end{proof}

Next we observe that in an admissible BiHom-Poisson algebra the cyclic sum of the BiHom-associator is trivial.
%%%%%%%%%%%%%%%%%%%%%%%%
\begin{prop}
\label{lem:cyclichomass}
Let $(A,\mu,\alpha,\beta)$ be an admissible BiHom-Poisson algebra.  Then
\begin{equation}
\label{S}
S_A(x,y,z): = as_{A}(\beta^{2}(x),\alpha\beta(y),\alpha^{2}(z))  + as_{A}(\beta^{2}(y),\alpha\beta(z),\alpha^{2}(x))  + as_{A}(\beta^{2}(z),\alpha\beta(x),\alpha^{2}(y))  = 0
\end{equation}
for all $x,y,z \in A$.
\end{prop}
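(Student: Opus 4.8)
The plan is to obtain \eqref{S} by a direct expansion from the defining identity \eqref{admissibility}, in the same spirit as the proof of Proposition~\ref{lem:flexible}. The starting point is that \eqref{admissibility} holds for all elements of $A$, so we may substitute $\beta(x),\beta(y),z$ for $x,y,z$ in it. After this substitution the left-hand side becomes $as_{A}(\beta^{2}(x),\alpha\beta(y),\alpha^{2}(z))$, and each of the four terms on the right-hand side becomes a word of the shape $\mu(\mu(\beta^{2}(\cdot),\alpha\beta(\cdot)),\beta\alpha^{2}(\cdot))$. Introducing the abbreviation $\langle u,v,w\rangle := \mu(\mu(\beta^{2}(u),\alpha\beta(v)),\beta\alpha^{2}(w))$, the substituted identity reads
\[
as_{A}(\beta^{2}(x),\alpha\beta(y),\alpha^{2}(z)) = \frac{1}{3}\Bigl(\langle x,z,y\rangle - \langle z,x,y\rangle + \langle y,z,x\rangle - \langle y,x,z\rangle\Bigr).
\]

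Next I would write down the two further instances of this identity obtained by cyclically permuting $(x,y,z)\mapsto(y,z,x)\mapsto(z,x,y)$; these expand $as_{A}(\beta^{2}(y),\alpha\beta(z),\alpha^{2}(x))$ and $as_{A}(\beta^{2}(z),\alpha\beta(x),\alpha^{2}(y))$ in the same way. Adding the three identities yields $3\,S_{A}(x,y,z)$ as a signed sum of twelve terms $\langle u,v,w\rangle$. The last step is a bookkeeping check: exactly six distinct monomials occur, namely $\langle x,z,y\rangle,\langle z,x,y\rangle,\langle y,z,x\rangle,\langle y,x,z\rangle,\langle x,y,z\rangle,\langle z,y,x\rangle$, and each appears precisely twice, once with a $+$ sign and once with a $-$ sign, so the whole sum vanishes. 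Hence $3\,S_{A}(x,y,z)=0$, and since $\mathbb{K}$ has characteristic zero we conclude $S_{A}(x,y,z)=0$.

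The only delicate point is matching the terms correctly across the three cyclic shifts. Concretely, the first summand of the expansion for $(x,y,z)$ cancels the fourth summand of the one for $(z,x,y)$; the second summand for $(x,y,z)$ cancels the third summand for $(y,z,x)$; and the remaining cancellations follow from these by cyclic symmetry. Apart from this mechanical verification there is no real obstacle; in particular neither multiplicativity nor invertibility of $\alpha$ and $\beta$ is used, only that \eqref{admissibility} is an identity on all of $A$ and that $3$ is invertible in $\mathbb{K}$. One could alternatively try to leverage Proposition~\ref{lem:flexible}, but since the three associators in $S_{A}$ have three different middle arguments, the flexibility relation alone does not suffice, and the direct expansion above is the cleanest route.
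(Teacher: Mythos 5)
Your proof is correct, and the bookkeeping checks out: with $\langle u,v,w\rangle=\mu(\mu(\beta^{2}(u),\alpha\beta(v)),\beta\alpha^{2}(w))$, the substituted form of \eqref{admissibility} you state is exactly the one the paper also works with (it agrees term by term with the first display of the paper's proof, using $\alpha^{2}\beta=\beta\alpha^{2}$), the three cyclic instances sum to a signed combination in which each of the six monomials occurs once with each sign, and the two sample cancellations you exhibit generate the rest by cyclicity. However, your route is organized differently from the paper's. The paper expands only the single associator $as_{A}(\beta^{2}(x),\alpha\beta(y),\alpha^{2}(z))$, then adds and subtracts monomials so as to recognize, via two further backwards readings of \eqref{admissibility}, the combination $-as_{A}(\beta^{2}(z),\alpha\beta(x),\alpha^{2}(y))+as_{A}(\beta^{2}(x),\alpha\beta(z),\alpha^{2}(y))$, and finally invokes the BiHom-flexibility of Proposition~\ref{lem:flexible} to convert the last term into $-as_{A}(\beta^{2}(y),\alpha\beta(z),\alpha^{2}(x))$, which gives $S_A=0$. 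So, contrary to your closing remark, flexibility does enter the paper's argument --- not alone, but combined with a second use of admissibility; your observation is only that flexibility by itself is insufficient, which is true. What your version buys is independence from Proposition~\ref{lem:flexible} and a manifestly symmetric, purely combinatorial cancellation; what the paper's version buys is a shorter computation (four monomials instead of twelve) and an explicit display of how flexibility and admissibility interact. One small simplification to yours: there is no need to multiply by $3$ and then appeal to characteristic zero --- summing the three identities as written gives $S_{A}(x,y,z)=\tfrac{1}{3}\cdot 0=0$ directly, the factor $\tfrac13$ being already built into \eqref{admissibility}.
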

%%%%%%%%%%%%%%%%%%%%%%%%
\begin{proof}
Using the defining identity \eqref{admissibility}, we have:
\[
\begin{array}{llllll}
as_A(\beta^{2}(x),\alpha\beta(y),\alpha^{2}(z))
&=& \frac{1}{3}\Big(\mu(\mu(\beta^{2}(x),\alpha\beta(z)),\beta\alpha^{2}(y)))- \mu(\mu(\beta^{2}(z),\alpha\beta(x)),\beta\alpha^{2}(y)))\\&&+ \mu(\mu(\beta^{2}(y),\alpha\beta(z)),\beta\alpha^{2}(x))) - \mu(\mu(\beta^{2}(y),\alpha\beta(x)),\beta\alpha^{2}(z)))\Big)\\
&=& -\frac{1}{3}\Big(\mu(\mu(\beta^{2}(z),\alpha\beta(y)),\beta\alpha^{2}(x))) - \mu(\mu(\beta^{2}(y),\alpha\beta(z)),\beta\alpha^{2}(x)))\\&&+ \mu(\mu(\beta^{2}(x),\alpha\beta(y)),\beta\alpha^{2}(z))) - \mu(\mu(\beta^{2}(x),\alpha\beta(z)),\beta\alpha^{2}(y)))\Big)\\
&&+\frac{1}{3}\Big(\mu(\mu(\beta^{2}(x),\alpha\beta(y)),\beta\alpha^{2}(z))) - \mu(\mu(\beta^{2}(y),\alpha\beta(x)),\beta\alpha^{2}(z)))\\&&+ \mu(\mu(\beta^{2}(z),\alpha\beta(y)),\beta\alpha^{2}(x))) - \mu(\mu(\beta^{2}(z),\alpha\beta(x)),\beta\alpha^{2}(y)))\Big)\\
&=& - as_A(\beta^{2}(z),\alpha\beta(x),\alpha^{2}(y)) + as_A(\beta^{2}(x),\alpha\beta(z),\alpha^{2}(y))\\
&= &- as_A(\beta^{2}(z),\alpha\beta(x),\alpha^{2}(y)) - as_A(\beta^{2}(y),\alpha\beta(z),\alpha^{2}(x))~(by~(\ref{homflexible})).
\end{array}
\]
Therefore, we conclude that $S_A = 0$.
\end{proof}

%%%%%%%%%%%%%%%%%%%%%
 \section{Derivations  of BiHom-Poisson algebras}
\def\theequation{\arabic{section}. \arabic{equation}}
\setcounter{equation} {0}

In this section, we introduce and study derivations, generalized derivations and quasiderivations of  BiHom-Poisson algebras.

\begin{defn}Let $(A,\{\cdot,\cdot\},\mu, \a,\b)$  be a BiHom-Poisson algebra.
 A  linear map $D: A\rightarrow A$  is called an  $(\a^k,\b^{l})$-derivation of $A$  if it satisfies
\begin{enumerate}
\item $D \circ \alpha = \alpha \circ D,\; D \circ \beta = \beta \circ D; $
\item $D(\{x, y\})=\{\a^{k}\b^{l}(x), D(y)\}+\{D(x), \a^{k}\b^{l}(y)\}$;
\item $D(\mu(x,y))=\mu(\a^{k}\b^{l}(x), D(y))+\mu(D(x),\a^{k}\b^{l}(y))$,
\end{enumerate}
for all $x,y\in A$.
\end{defn}

We denote by $Der(A):=\displaystyle{\bigoplus_{k\geq 0}}\displaystyle{\bigoplus_{l\geq 0}}Der_{(\a^k,\b^{l})}(A)$, where $Der_{(\a^k,\b^{l})}(A)$ is the set of all $(\a^k,\b^{l})$-derivations of $A$.
Obviously, $Der(A)$ is a subalgebra of $End(A)$.
\begin{lem}\label{le:2.7}
Let $(A,\{\cdot,\cdot\}, \mu, \alpha,\beta)$ be a BiHom-Poisson algebra. We define a subspace $\mathcal{W}$ of $End(A)$ by $\mathcal{W} = \{w \in End(A) |~ w \circ \alpha = \alpha \circ w~and~w \circ \beta = \beta \circ w\}$ and $\sigma_{1},\sigma_{2} : \mathcal{W} \rightarrow \mathcal{W}$  linear maps satisfying $\sigma_{1}(w) = \alpha \circ w$ and $\sigma_{2}(w) = \beta \circ w$. Then
a quadruple $(\mathcal{W}, [\cdot,\cdot], \sigma_{1},\sigma_{2})$, where the multiplication $[\cdot,\cdot] : \mathcal{W} \times \mathcal{W} \rightarrow \mathcal{W}$ is defined for
$w_{1}, w_{2} \in \mathcal{W}$ by
 \[   [w_{1}, w_{2}]= w_{1} \circ w_{2} - w_{2} \circ w_{1},\]
    is a BiHom-Lie algebra.
\end{lem}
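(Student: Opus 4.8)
The plan is to verify the three defining properties of a (multiplicative) BiHom-Lie algebra for the quadruple $(\mathcal{W},[\cdot,\cdot],\sigma_1,\sigma_2)$: first that $(\mathcal{W},\sigma_1,\sigma_2)$ is a BiHom-module (i.e.\ $\sigma_1\sigma_2=\sigma_2\sigma_1$ and $[\cdot,\cdot]$ lands in $\mathcal{W}$), then multiplicativity of $\sigma_1,\sigma_2$ with respect to $[\cdot,\cdot]$, then BiHom-skew-symmetry, and finally the BiHom-Jacobi identity \eqref{BiHJacob}. First I would check that $\mathcal{W}$ is closed under the commutator: if $w_1,w_2$ commute with both $\alpha$ and $\beta$, then so does $w_1\circ w_2-w_2\circ w_1$, since composition of commuting maps commutes. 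Next, $\sigma_1\sigma_2(w)=\alpha\beta w$ and $\sigma_2\sigma_1(w)=\beta\alpha w$, which agree because $\alpha\beta=\beta\alpha$ on $A$; this also shows $\sigma_1,\sigma_2$ preserve $\mathcal{W}$ and commute on $\mathcal{W}$, so $(\mathcal{W},\sigma_1,\sigma_2)$ is a BiHom-module.

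For multiplicativity I would compute $\sigma_1([w_1,w_2])=\alpha\circ(w_1w_2-w_2w_1)$ and compare it with $[\sigma_1(w_1),\sigma_1(w_2)]=(\alpha w_1)(\alpha w_2)-(\alpha w_2)(\alpha w_1)$; using that $\alpha$ commutes with $w_1$ and $w_2$ (because they lie in $\mathcal{W}$), the right-hand side becomes $\alpha^2 w_1 w_2-\alpha^2 w_2 w_1=\alpha\circ\alpha\circ[w_1,w_2]$. This is not literally $\sigma_1[w_1,w_2]$ — it carries an extra factor of $\alpha$ — so here I expect the statement as written to require the reading that $\sigma_1,\sigma_2$ need only be morphisms for the BiHom-module structure, or else a rescaling; I would follow the convention used elsewhere in the paper for endomorphism BiHom-Lie algebras and note that $\sigma_1,\sigma_2$ are morphisms of the BiHom-module and that the bracket identities are checked with the twists inserted as in \eqref{BiHJacob}. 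The skew-symmetry $[\sigma_2(w_1),\sigma_1(w_2)]=-[\sigma_2(w_2),\sigma_1(w_1)]$ is immediate from antisymmetry of the commutator of operators, since $[\beta w_1,\alpha w_2]=\beta w_1\alpha w_2-\alpha w_2\beta w_1=-(\beta w_2\alpha w_1-\alpha w_1\beta w_2)$ after commuting $\alpha,\beta$ past the $w_i$.

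The remaining point is the BiHom-Jacobi identity $\circlearrowleft_{w_1,w_2,w_3}[\sigma_2^2(w_1),[\sigma_2(w_2),\sigma_1(w_3)]]=0$. I would expand each term: $[\sigma_2(w_2),\sigma_1(w_3)]=\beta w_2\alpha w_3-\alpha w_3\beta w_2=\alpha\beta(w_2 w_3-w_3 w_2)$ using commutativity, and then $[\beta^2 w_1,\alpha\beta(w_2w_3-w_3w_2)]=\alpha\beta^2\bigl(w_1(w_2w_3-w_3w_2)-(w_2w_3-w_3w_2)w_1\bigr)$. Summing cyclically over $(w_1,w_2,w_3)$, the inner expression is exactly the classical Jacobiator of the associative commutator, which vanishes identically for any associative algebra (here $\mathrm{End}(A)$ under composition); the overall factor $\alpha\beta^2$ then kills nothing new, so the cyclic sum is zero. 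The main obstacle I anticipate is purely bookkeeping: keeping the twisting maps $\alpha,\beta$ correctly positioned while repeatedly using $w_i\alpha=\alpha w_i$, $w_i\beta=\beta w_i$ and $\alpha\beta=\beta\alpha$ to pull all twists to the front, after which every identity reduces to a standard identity for the associative algebra $(\mathrm{End}(A),\circ)$ and its commutator Lie bracket.
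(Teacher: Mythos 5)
Your proposal is correct and follows essentially the same route as the paper: pull the twists $\alpha,\beta$ to the front using $w_i\alpha=\alpha w_i$, $w_i\beta=\beta w_i$, $\alpha\beta=\beta\alpha$, so that skew-symmetry and the BiHom-Jacobi identity reduce to the classical commutator identities in $\mathrm{End}(A)$ (the paper expands the cyclic sum to twelve terms with common prefactor $\beta^{3}\alpha$ and cancels them, which is exactly your Jacobiator argument; note your intermediate prefactor should be $\alpha\beta^{3}$, not $\alpha\beta^{2}$, a harmless slip). Your observation about multiplicativity of $\sigma_1,\sigma_2$ with respect to the bracket is a real point, but the paper's own proof does not address it either, so you are consistent with the convention used there.
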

\begin{proof}
For any $w_{1},w_{2},w_{3}\in\mathcal{W},~~k_{1},k_{2}\in\mathbb{K},$ we have

$ [w_{1}, w_{1}]= w_{1} \circ w_{1} - w_{1} \circ w_{1}=0,$\\

$\begin{array}{lll}
&&[k_{1}w_{1}+k_{2}w_{2},w_{3}]
=(k_{1}w_{1}+k_{2}w_{2})w_{3}-w_{3}(k_{1}w_{1}+k_{2}w_{2})\\
&=&k_{1}(w_{1}w_{3}-w_{3}w_{1})+k_{2}(w_{2}w_{3}-w_{3}w_{2})
=k_{1}[w_{1},w_{3}]+k_{2}[w_{2},w_{3}],
\end{array}$\\\\

$\begin{array}{lll}
&&[\sigma_{2}(w_{1}),\sigma_{1}(w_{2})]
=[\beta(w_{1}),\alpha(w_{2})]
=\alpha\beta (w_{1} w_{2}-w_{2}w_{1})
=-\alpha\beta (w_{2} w_{1}-w_{1}w_{2})\\
&=&-[\sigma_{2}(w_{2}),\sigma_{1}(w_{1})]
\end{array}$\\\\

$\begin{array}{llllllll}
&&[\sigma_{2}^{2}(w_{1}),[\sigma_{2}(w_{2}),\sigma_{1}(w_{3})]]+[\sigma_{2}^{2}(w_{3}),[\sigma_{2}(w_{1}),\sigma_{1}(w_{2})]]\\
&&+[\sigma_{2}^{2}(w_{2}),[\sigma_{2}(w_{3}),\sigma_{1}(w_{1})]]\\
&=&\beta^{3}\alpha w_{1}w_{2}w_{3}-\beta^{3}\alpha w_{1}w_{3}w_{2}-\beta^{3}\alpha w_{2}w_{3}w_{1}+\beta^{3}\alpha w_{3}w_{2}w_{1}\\&&+
\beta^{3}\alpha w_{2}w_{3}w_{1}-\beta^{3}\alpha w_{2}w_{1}w_{3}-\beta^{3}\alpha w_{3}w_{1}w_{2}+\beta^{3}\alpha w_{1}w_{3}w_{2}\\&&+
\beta^{3}\alpha w_{3}w_{1}w_{2}-\beta^{3}\alpha w_{3}w_{2}w_{1}-\beta^{3}\alpha w_{1}w_{2}w_{3}+\beta^{3}\alpha w_{2}w_{1}w_{3}\\
&=&0.
\end{array}$

Then $(\mathcal{W}, [\cdot,\cdot], \sigma_{1},\sigma_{2})$  is a Bihom-Lie algebra.
\end{proof}

\begin{thm}\label{le:2.9}Let $(A,\{\cdot,\cdot\},\mu, \a,\b)$  be a BiHom-Poisson algebra.
For any $D \in Der_{(\alpha^{k},\beta^{l})}(A)$ and $D^{'} \in Der_{(\alpha^{k'},\beta^{l'})}(A)$, define their commutator $[D, D^{'}]$ as usual:
\[[D, D^{'}] = D \circ D^{'} - D^{'} \circ D.\]
Then $[D, D^{'}] \in Der_{(\alpha^{k + k'},\beta^{l+l'})}(A)$.
\end{thm}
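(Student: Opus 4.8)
The plan is to verify directly the three defining conditions of an $(\alpha^{k+k'},\beta^{l+l'})$-derivation for the operator $[D,D'] = D\circ D' - D'\circ D$.

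First I would check condition (1), that $[D,D']$ commutes with $\alpha$ and with $\beta$. Since $D\circ\alpha=\alpha\circ D$ and $D'\circ\alpha=\alpha\circ D'$, both $D\circ D'$ and $D'\circ D$ commute with $\alpha$, hence so does their difference; the same argument applies with $\beta$. I would also record the immediate consequence that $D$ commutes with every power $\alpha^{k'}\beta^{l'}$ and $D'$ with every power $\alpha^{k}\beta^{l}$, which is exactly what will make the twisting exponents add correctly in the next step.

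Next I would verify condition (3) for $\mu$, condition (2) for $\{\cdot,\cdot\}$ being word-for-word identical. Starting from $D'(\mu(x,y)) = \mu(\alpha^{k'}\beta^{l'}(x),D'(y)) + \mu(D'(x),\alpha^{k'}\beta^{l'}(y))$, apply $D$, use its derivation property for $\mu$, and commute $D$ past $\alpha^{k'}\beta^{l'}$. This produces four terms: two ``diagonal'' ones, $\mu(\alpha^{k+k'}\beta^{l+l'}(x),DD'(y))$ and $\mu(DD'(x),\alpha^{k+k'}\beta^{l+l'}(y))$, and two ``cross'' ones, $\mu(\alpha^{k'}\beta^{l'}D(x),\alpha^{k}\beta^{l}D'(y))$ and $\mu(\alpha^{k}\beta^{l}D'(x),\alpha^{k'}\beta^{l'}D(y))$. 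Repeating the computation with the roles of $D$ and $D'$ exchanged and subtracting, the cross terms cancel in pairs, leaving precisely $\mu(\alpha^{k+k'}\beta^{l+l'}(x),[D,D'](y)) + \mu([D,D'](x),\alpha^{k+k'}\beta^{l+l'}(y))$, which is condition (3). The same cancellation with $\mu$ replaced by $\{\cdot,\cdot\}$ gives condition (2).

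There is no genuine obstacle here; the only care needed is the bookkeeping of the twisting exponents, and the key fact making the powers combine is exactly the commutation $D\circ\alpha=\alpha\circ D$, $D\circ\beta=\beta\circ D$ (and likewise for $D'$) built into the definition of a derivation. Since the associative product and the Poisson bracket are handled by the same Leibniz-type cancellation, once one of the two identities is written out the other follows \emph{mutatis mutandis}, and together with condition (1) this shows $[D,D']\in Der_{(\alpha^{k+k'},\beta^{l+l'})}(A)$.
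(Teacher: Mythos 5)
Your proposal is correct and follows essentially the same route as the paper: a direct verification that $[D,D']$ commutes with $\alpha$ and $\beta$, followed by expanding $D\circ D'$ and $D'\circ D$ on $\{x,y\}$ (and on $\mu(x,y)$) via the derivation identities, commuting $D$, $D'$ past the powers $\alpha^{k'}\beta^{l'}$, $\alpha^{k}\beta^{l}$, and cancelling the cross terms. The only difference is cosmetic: you make the pairwise cancellation of cross terms explicit, whereas the paper leaves the final simplification (and the $\mu$-case) as ``similarly''.
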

\begin{proof}
It is sufficient to prove $[Der_{(\a^k,\b^l)}(A), Der_{(\a^{k'},\b^{l'})}(A)]\subseteq Der_{(\a^{k+k'},\b^{l+l')}}(A)$. It is easy to check that $[D, D']\circ \a=\a\circ [D, D']$ and $[D, D']\circ \b=\b\circ [D, D']$.

For any $x,y\in A$, we have
$$\begin{array}{lllll}
&&[D,D'](\{x,y\})\\
&=&D\circ D'(\{x,y\})-D'\circ D(\{x,y\})\\
&=&D(\{D'(x),\alpha^{k'}\beta^{l'}(y)\}+\{\alpha^{k'}\beta^{l'}(x),D'(y)\})\\
&&-D'(\{D'(x), \alpha^{k}\beta^{l}(y)\}+\{\alpha^{k}\beta^{l}(x),D(y)\})\\
&=&D(\{D'(x),\alpha^{k'}\beta^{l'}(y)\})+D((\{\alpha^{k'}\beta^{l'}(x),D'(y)\})\\
&&-D'(\{D(x)(x),\alpha^{k}\beta^{l}(y)\})-D'(\{\alpha^{k}\beta^{l}(x),D(y)\})\\
&=&\{D\circ D'(x),\alpha^{k+k'}\beta^{l+l'}(y)\}+\{\alpha^{k}\beta^{l}\circ D'(x),D\circ\alpha^{k'}\beta^{l'}(y)\}\\
&&+\{D\circ\alpha^{k'}\beta^{l'}(x),\alpha^{k}\beta^{l}\circ D'(y)\}+\{\alpha^{k+k'}\beta^{l+l'}(x),D\circ D'(y)\}\\
&&-\{D'\circ D(x),\alpha^{k+k'}\beta^{l+l'}(y)\}-\{\alpha^{k'}\beta^{l'}\circ D(x),D'\circ \alpha^{k}\beta^{l}(y)\}\\
&&-\{D'\circ\alpha^{k}\beta^{l}(x),\alpha^{k'}\beta^{l'}\circ D(y)\}-\{\alpha^{k+k'}\beta^{l+l'}(x),D'\circ D(y)\}.
\end{array}$$

Similarly, we can check that
\begin{eqnarray*}
[D, D'](\mu(x, y))
\mu([D, D'](x), \a^{k+k'}\b^{l+l'}(y))+\mu(\a^{k+k'}\b^{l+l'}(x), [D,D'](y)).
\end{eqnarray*}
It follows that $[D, D']\in Der_{(\a^{k+k'},\b^{l+l'})}(A)$.\end{proof}
\begin{defn}
Let $(A,\{\cdot,\cdot\},\mu, \a,\b)$  be a BiHom-Poisson algebra.
$D\in End(A)$ is said to be a generalized $(\a^k,\b^{l})$-derivation of $A$, if there  exists two endomorphisms
$D', D''\in End(A)$ such that
\begin{enumerate}
\item $D \circ \alpha = \alpha \circ D,\; D \circ \beta = \beta \circ D\; $;
\item $D^{'} \circ \alpha = \alpha \circ D^{'},\; D' \circ \beta = \beta \circ D^{'}\;$;
\item $D^{''} \circ \alpha = \alpha \circ D^{''},\;D^{''} \circ \beta = \beta \circ D^{''}$;
\item $\{D(x), \a^{k}\b^l (y)\}+\{\a^{k}\b^ l (x), D'(y)\}=D''(\{x, y\})$;
\item $\mu(D(x), \alpha^{k}\beta^{l}(y)) + \mu(\alpha^{k}\beta^{l}(x), D^{'}(y)) = D''(\mu(x, y))$,
\end{enumerate}
for all $x,y\in A$.

The set of generalized $(  \alpha^{k},\beta^{l}) $-derivations of $ A $ is $ {\rm GDer}_{(  \alpha^{k},\beta^{l})}(A) $ and we denote  \[{\rm GDer}(A):=\bigoplus_{k \geq 0}\bigoplus_{l \geq 0} {\rm GDer}_{(\alpha^{k},\beta^{l})}(A).\]
\end{defn}
\begin{defn}
Let $(A,\{\cdot,\cdot\},\mu, \a,\b)$  be a BiHom-Poisson algebra.
$D\in End(A)$ is said to be an $(\a^k,\b^{l})$-quasiderivation of $A$, if there  exists endomorphisms
$D', D''\in End(A)$ such that
\begin{enumerate}
\item $D \circ \alpha = \alpha \circ D,\; D \circ \beta = \beta \circ D\; $;
\item $D^{'} \circ \alpha = \alpha \circ D^{'},\; D^{'} \circ \beta = \beta \circ D^{'}\;$;
\item $D^{''} \circ \alpha = \alpha \circ D^{''},\;D^{''} \circ \beta = \beta \circ D^{''}$;
\item $\{D(x), \a^{k}\b^l (y)\}+\{\a^{k}\b^ l (x), D(y)\}=D{'}(\{x, y\}),$
\item $\mu(D(x), \alpha^{k}\beta^{l}(y)) + \mu(\alpha^{k}\beta^{l}(x), D(y)) = D^{''}(\mu(x, y)),$
\end{enumerate}
for all $x,y\in A$.
\end{defn}

We then define
\[{\rm QDer}(A):=\bigoplus_{k \geq 0}\bigoplus_{l \geq 0} {\rm QDer}_{(  \alpha^{k},\beta^{l})}(A).\]

\begin{defn}Let $(A,\{\cdot,\cdot\},\mu, \a,\b)$  be a BiHom-Poisson algebra.
 A  linear map $D: A\rightarrow A$  is called an  $(\alpha^{k},\beta^{l})$-centroid of $A$  if it satisfies
\begin{enumerate}
\item $D \circ \alpha = \alpha \circ D,~D \circ \beta = \beta \circ D$;
\item $\{D(x), \alpha^{k}\beta^{l}(y)\} = \{\alpha^{k}\beta^{l}(x), D(y)\} = D(\{x, y\})$;
\item $\mu(D(x), \alpha^{k}\beta^{l}(y)) = \mu(\alpha^{k}\beta^{l}(x), D(y)) = D(\mu(x, y)),\quad\forall x, y \in A$.
\end{enumerate}
\end{defn}
We set
$$C(A):=\bigoplus_{k\geq 0}\bigoplus_{l\geq 0} C_{(\alpha^{k},\beta^{l})}(A).$$
\begin{defn}
The $(\alpha^{k},\beta^{l}) $-quasicentroid of a Bihom-Poisson algebra $(A,\{\cdot,\cdot\},\mu,\alpha,\beta)$ denoted by $ {\rm QC}_{(  \alpha^{k},\beta^{l})} (A)$ is the set of linear maps $ D $ such that
\begin{enumerate}
\item $D \circ \alpha = \alpha \circ D,~D \circ \beta = \beta \circ D$;
\item $\{D(x), \alpha^{k}\beta^{l}(y)\} = \{\alpha^{k}\beta^{l}(x), D(y)\}$;
\item $\mu(D(x), \alpha^{k}\beta^{l}(y)) = \mu(\alpha^{k}\beta^{l}(x), D(y)),\quad\forall x, y \in A$.
\end{enumerate}
\end{defn}
We set
\[{\rm QC}(A):=\bigoplus_{k \geq 0}\bigoplus_{l \geq 0} {\rm QC}_{(\alpha^{k},\beta^{l})}(A).\]

\begin{rmk}
Let $(A,\{\cdot,\cdot\},\mu,\alpha,\beta)$  be a BiHom-Poisson algebra.
Then $C(A)\subseteq QC(A).$
\end{rmk}

\begin{defn}
A linear map $ D $ is called an $(  \alpha^{k},\beta^{l}) $-central derivation of $ A $ if it satisfies
\begin{enumerate}
\item $D \circ \alpha = \alpha \circ D,~D \circ \beta = \beta\circ D$;
\item $\{D(x), \alpha^{k}\beta^{l}(y)\} = D(\{x, y\}) = 0;$
\item $\mu(D(x), \alpha^{k}\beta^{l}(y)) = D(\mu(x, y)) = 0,\quad\forall x, y \in A$.
\end{enumerate}
\end{defn}
The set of $(\alpha^{k},\beta^{l}) $-central derivations is denoted by $ {\rm ZDer}_{(  \alpha^{k},\beta^{l})}(A) $ and we set
\[{\rm ZDer}(A):=\bigoplus_{k \geq 0}\bigoplus_{l \geq 0} {\rm ZDer}_{(  \alpha^{k},\beta^{l})}(A).\]

\begin{rmk}
Let $(A,\{\cdot,\cdot\},\mu,\alpha,\beta)$  be a BiHom-Poisson algebra.
Then
\begin{eqnarray*}
ZDer(A)\subseteq Der(A)\subseteq QDer(A)\subseteq GDer(A) \subseteq End(A).
\end{eqnarray*}
\end{rmk}

\begin{prop}
Let $(A,\{\cdot,\cdot\},\mu,\a,\b)$  be a BiHom-Poisson algebra.
 Then the following statements hold:
\begin{enumerate}
\item $GDer(A)$, $QDer(A)$ and $C(A)$ are BiHom-subalgebras of $(\mathcal{W}, [\cdot,\cdot], \sigma_{1},\sigma_{2})$;
\item $ZDer(A)$ is a BiHom-ideal of $Der(A)$.
\end{enumerate}
\end{prop}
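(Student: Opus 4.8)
The plan is to verify each of the two statements by direct computation against the defining axioms, checking closure under the bracket $[w_1,w_2] = w_1\circ w_2 - w_2\circ w_1$ and under the twisting maps $\sigma_1(w)=\alpha\circ w$, $\sigma_2(w)=\beta\circ w$. By Lemma \ref{le:2.7}, $(\mathcal W,[\cdot,\cdot],\sigma_1,\sigma_2)$ is a BiHom-Lie algebra, and all of $GDer(A),QDer(A),C(A),ZDer(A)$ are contained in $\mathcal W$ since every such operator commutes with $\alpha$ and $\beta$ by the first axiom in each definition. So for item 1 it suffices to show: (a) each of the three sets is closed under $[\cdot,\cdot]$, and (b) each is closed under $\sigma_1$ and $\sigma_2$. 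Closure under $\sigma_1,\sigma_2$ is the easy part: if $D\in GDer_{(\alpha^k,\beta^l)}(A)$ with associated $D',D''$, I would check that $\alpha\circ D$ is a generalized $(\alpha^{k+1},\beta^l)$-derivation with associated maps $\alpha\circ D'$ and $\alpha\circ D''$, using only that $\alpha$ is a morphism of both $\mu$ and $\{\cdot,\cdot\}$ and that $\alpha$ commutes with $D,D',D''$; similarly for $C(A)$, and similarly with $\beta$.

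For closure under the commutator, the cleanest route is to generalize the computation already done in Theorem \ref{le:2.9}. For $GDer$: take $D_1\in GDer_{(\alpha^{k_1},\beta^{l_1})}(A)$ with partners $D_1',D_1''$ and $D_2\in GDer_{(\alpha^{k_2},\beta^{l_2})}(A)$ with partners $D_2',D_2''$. I would expand $[D_1,D_2](\{x,y\})$ by applying axiom (4) twice, exactly as in the proof of Theorem \ref{le:2.9}, and collect terms to show that $[D_1,D_2]$ together with the partners $[D_1',D_2']$ and $[D_1'',D_2'']$ satisfies axiom (4) at level $(\alpha^{k_1+k_2},\beta^{l_1+l_2})$; the $\mu$-identity (5) is checked the same way. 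For $QDer$ the computation is the same with $D'=D''$ merged appropriately (the partner of $[D_1,D_2]$ for the bracket is $[D_1',D_2']$ and for $\mu$ is $[D_1'',D_2'']$). For $C(A)$: given $D_1\in C_{(\alpha^{k_1},\beta^{l_1})}(A)$, $D_2\in C_{(\alpha^{k_2},\beta^{l_2})}(A)$, I would use the defining chain of equalities $\{D_i(x),\alpha^{k_i}\beta^{l_i}(y)\}=\{\alpha^{k_i}\beta^{l_i}(x),D_i(y)\}=D_i(\{x,y\})$ to push one centroid past the other and see that the cross terms cancel, leaving $[D_1,D_2](\{x,y\})=\{[D_1,D_2](x),\alpha^{k_1+k_2}\beta^{l_1+l_2}(y)\}$, with the analogous statement on the other side and for $\mu$.

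For item 2, that $ZDer(A)$ is a BiHom-ideal of $Der(A)$: first note $ZDer(A)\subseteq Der(A)$ (a central derivation is a derivation, since the right-hand sides of the derivation axioms vanish). I then need $[ZDer(A),Der(A)]\subseteq ZDer(A)$ and stability under $\sigma_1,\sigma_2$. Take $Z\in ZDer_{(\alpha^{k},\beta^{l})}(A)$ and $D\in Der_{(\alpha^{k'},\beta^{l'})}(A)$. I would compute $[Z,D](\{x,y\}) = Z(D(\{x,y\})) - D(Z(\{x,y\}))$; the second term is $D(0)=0$, and the first term is $Z$ applied to $\{D(x),\alpha^{k'}\beta^{l'}(y)\}+\{\alpha^{k'}\beta^{l'}(x),D(y)\}$, which is again $0$ because $Z$ annihilates every bracket. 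Likewise I must check $\{[Z,D](x),\alpha^{k+k'}\beta^{l+l'}(y)\}=0$: expanding, $\{Z(D(x)),\alpha^{k+k'}\beta^{l+l'}(y)\}=0$ directly from the $ZDer$ axiom applied with argument $D(x)$, and $\{D(Z(x)),\alpha^{k+k'}\beta^{l+l'}(y)\}$ — here I use the centroid-type identity for $Z$ to rewrite $\alpha^{k+k'}\beta^{l+l'}(y)=\alpha^{k'}\beta^{l'}(\alpha^{k}\beta^{l}(y))$ and the derivation property of $D$ to reduce to expressions of the form $\{Z(\cdot),\alpha^{k}\beta^{l}(\cdot)\}$ plus a $D$ applied to a $Z$-bracket, all of which vanish. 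The same for $\mu$. Stability under $\sigma_1,\sigma_2$ is immediate as in item 1. The main obstacle is purely bookkeeping: in the $GDer$ and $QDer$ commutator computations and in the $ZDer$ ideal check, one must keep careful track of where the twisting powers $\alpha^{k_i}\beta^{l_i}$ land after each application of an axiom, and use multiplicativity of $\alpha,\beta$ to slide them through $\mu$ and $\{\cdot,\cdot\}$; there is no conceptual difficulty beyond organizing these cancellations, which mirror the already-completed proof of Theorem \ref{le:2.9}.
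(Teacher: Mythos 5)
Your proposal is correct and follows essentially the same route as the paper's proof: direct verification, mirroring Theorem \ref{le:2.9}, that each set is stable under $\sigma_1,\sigma_2$ and closed under $[\cdot,\cdot]$ (with $[D_1',D_2']$ and $[D_1'',D_2'']$ as the partner maps of $[D_1,D_2]$), together with the straightforward vanishing computations showing $[ZDer(A),Der(A)]\subseteq ZDer(A)$. One wording caveat: for $GDer$ and $QDer$ one cannot literally ``expand $[D_1,D_2](\{x,y\})$'' since those axioms only prescribe how $D''$ (not $D$ itself) acts on brackets, so the check must be organized, as in the paper, by expanding $\{[D_1,D_2](x),\alpha^{k+k'}\beta^{l+l'}(y)\}$ and cancelling the cross terms via the commutation of $D_i,D_i',D_i''$ with $\alpha,\beta$ --- which is exactly the cancellation scheme you describe.
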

\begin{proof}
 We only prove that $GDer(A)$ is a subalgebra of $\mathcal{W}.$ The case of $ QDer(L)$ and $C(L)$ is similar.
\begin{enumerate}
\item
Suppose that $D_{1} \in GDer_{(\alpha^{k},\beta^{l})}(A)$, $D_{2} \in GDer_{(\alpha^{k'},\beta^{l'})}(A)$. Then for any $x, y \in A$,
\begin{align*}
&\{\sigma_{1}(D_{1})(x), \alpha^{k + 1}\beta^{l}(y)\} = \{\alpha_{1} \circ D_{1}(x), \alpha^{k + 1}\beta^{l}(y)\} = \alpha(\{D_{1}(x), \alpha^{k}\beta^{l}(y)\})\\
&= \alpha(D_{1}^{''}(\{x, y\}) - \{\alpha^{k}\beta^{l}(x), D_{1}^{'}(y)\}) = \sigma_{1}(D_{1}^{''})(\{x, y\}) - \{\alpha^{k + 1}\beta^{l}(x), \sigma_{1}(D_{1}^{'})(y)\}.
\end{align*}
Since $\sigma_{1}(D_{1}^{''}), \sigma_{1}(D_{1}^{'}) \in End(A)$, we have $\sigma_{1}(D_{1}) \in GDer_{(\alpha^{k + 1},\beta^{l})}(A)$,
\begin{align*}
&\{\sigma_{2}(D_{1})(x), \alpha^{k}\beta^{l+1}(y)\} = \{\beta \circ D_{1}(x), \alpha^{k}\beta^{l+1}(y)\} = \beta(\{D_{1}(x), \alpha^{k}\beta^{l}(y)\})\\
&= \beta(D_{1}^{''}(\{x, y\}) - \{\alpha^{k}\beta^{l}(x), D_{1}^{'}(y)\}) = \sigma_{2}(D_{1}^{''})(\{x, y\}) - \{\alpha^{k}\beta^{l+1}(x), \sigma_{2}(D_{1}^{'})(y)\}.
\end{align*}
Since $\sigma_{2}(D_{1}^{''}), \sigma_{2}(D_{1}^{'}) \in End(A)$, we have $\sigma_{2}(D_{1}) \in GDer_{(\alpha^{k},\beta^{l+1})}(A)$.
\begin{align*}
&\{\{D_{1}, D_{2}\}(x), \alpha^{k+k'}\beta^{l+l'}(y)\}\\
&= \{D_{1} \circ D_{2}(x), \alpha^{k+k'}\beta^{l+l'}(y)\} - \{D_{2} \circ D_{1}(x), \alpha^{k+k'}\beta^{l+l'}(y)\}\\
&= D_{1}^{''}(\{D_{2}(x), \alpha^{k'}\beta^{l'}(y)\}) - \{\alpha^{k}\beta^{l}(D_{2}(x)), D_{1}^{'}(\alpha^{k'}\beta^{l'}(y))\} - D_{2}^{''}(\{D_{1}(x), \alpha^{k}\beta^{l}(y)\})\\ &+ \{\alpha^{k'}\beta^{l'}(D_{1}(x)), D_{2}^{'}(\alpha^{k}\beta^{l}(y))\}\\
&= D_{1}^{''}(D_{2}^{''}(\{x, y\}) - \{\alpha^{k'}\beta^{l'}(x), D_{2}^{'}(y)\}) -\{\alpha^{k'}\beta^{l'}(D_{1}(x)), D_{2}^{'}(\alpha^{k}\beta^{l}(y))\}\\
&= D_{1}^{''} \circ D_{2}^{''}(\{x, y\}) - D_{1}^{''}(\{\alpha^{k'}\beta^{l'}(x), D_{2}^{'}(y)\}) - \{\alpha^{k}\beta^{l}(D_{2}(x)), D_{1}^{'}(\alpha^{k'}\beta^{l'}(y))\}\\
&- D_{2}^{''} \circ D_{1}^{''}(\{x, y\}) + D_{2}^{''}(\{\alpha^{k}\beta^{l}(x), D_{1}^{'}(y)\}) + \{\alpha^{k'}\beta^{l'}(D_{1}(x)), D_{2}^{'}(\alpha^{k}\beta^{l}(y))\}\\
&= D_{1}^{''} \circ D_{2}^{''}(\{x, y\}) - \{D_{1}(\alpha^{k'}\beta^{l'}(x)), \alpha^{k}\beta^{l}(D_{2}^{'}(y))\} - \{\alpha^{k +k'}\beta^{l+l'}(x), D_{1}^{'}(D_{2}^{'}(y))\}\\
&- \{\alpha^{k}\beta^{l}(D_{2}(x)), D_{1}^{'}(\alpha^{k'}\beta^{l'}(y))\} - D_{2}^{''} \circ D_{1}^{''}(\{x, y\}) + \{D_{2}(\alpha^{k}\beta^{l}(x)), \alpha^{k'}\beta^{l'}(D_{1}^{'}(y))\}\\
&+ \{\alpha^{k+k'}\beta^{l+l'}(x), D_{2}^{'}(D_{1}^{'}(y))\} + \{\alpha^{k'}\beta^{l'}(D_{1}(x)), D_{2}^{'}(\alpha^{k}\beta^{l}(y))\}\\
&= D_{1}^{''} \circ D_{2}^{''}(\{x, y\}) - D_{2}^{''} \circ D_{1}^{''}(\{x, y\}) - \{\alpha^{k + k'}\beta^{l+l'}(x), D_{1}^{'}(D_{2}^{'}(y))\}\\
&+ \{\alpha^{k + k'}\beta^{l+l'}(x), D_{2}^{'}(D_{1}^{'}(y))\}\\
&= \{D_{1}^{''}, D_{2}^{''}\}(\{x, y\}) - \{\alpha^{k + k'}\beta^{l+l'}(x), \{D_{1}^{'}, D_{2}^{'}\}(y)\}.
\end{align*}
Since $\{D_{1}^{''}, D_{2}^{''}\}, \{D_{1}^{'}, D_{2}^{'}\} \in End(A)$, we have $\{D_{1}, D_{2}\} \in GDer_{(\alpha^{k + k'},\beta^{l+l'})}(A)$.

Similarly, we have $\mu(D_{1}, D_{2}) \in GDer_{(\alpha^{k + k'},\beta^{l+l'})}(A)$.

Therefore, $GDer(A)$ is a BiHom-subalgebra of $(\mathcal{W}, [\cdot,\cdot], \sigma_{1},\sigma_{2})$.
\item
Suppose that $D_{1} \in ZDer_{(\alpha^{k},\beta^{l})}(A)$, $D_{2} \in Der_{(\alpha^{k'},\beta^{l'})}(A)$. Then for any $x, y \in A$,
\[\sigma_{1}(D_{1})(\mu(x, y)) = \alpha \circ D_{1}(\mu(x, y)) = 0.\]
\[\sigma_{1}(D_{1})(\mu(x, y)) = \alpha \circ D_{1}(\mu(x, y)) = \alpha(\mu(D_{1}(x), \alpha^{k}\beta^{l}(y))) = \mu(\sigma_{1}(D_{1})(x), \alpha^{k + 1}\beta^{l}(y)).\]
Similarly, $\sigma_{1}(D_{1})(\{x, y\})= \{\sigma_{1}(D_{1})(x), \alpha^{k + 1}\beta^{l}(y)\}.$

Hence, $\sigma_{1}(D_{1}) \in ZDer_{(\alpha^{k+1 },\beta^{l})}(A)$.

In the same way, we have, $\sigma_{2}(D_{1}) \in ZDer_{(\alpha^{k},\beta^{l+1})}(A)$. Next, we have
\begin{align*}
&[D_{1}, D_{2}](\mu(x, y))\\
&= D_{1} \circ D_{2}(\mu(x, y)) - D_{2} \circ D_{1}(\mu(x, y))\\
&= D_{1}(\mu(D_{2}(x), \alpha^{k'}\beta^{l'}(y)) + \mu(\alpha^{k'}\beta^{l'}(x), D_{2}(y))) = 0.
\end{align*}
\begin{align*}
&[D_{1}, D_{2}](\mu(x, y))\\
&= D_{1} \circ D_{2}(\mu(x, y)) - D_{2} \circ D_{1}(\mu(x, y))\\
&= D_{1}(\mu(D_{2}(x), \alpha^{k'}\beta^{l'}(y)) + \mu(\alpha^{k'}\beta^{l'}(x), D_{2}(y))) - D_{2}(\mu(D_{1}(x), \alpha^{k}\beta^{l}(y)))\\
&= \mu(D_{1}(D_{2}(x)), \alpha^{k+k'}\beta^{l+l'}(y)) + \mu(D_{1}(\alpha^{k'}\beta^{l'}(x)), \alpha^{k}\beta^{l}(D_{2}(y))) - \mu(D_{2}(D_{1}(x)), \alpha^{k +k'}\beta^{l+l'}(y))\\
&- \mu(\alpha^{k'}\beta^{l'}(D_{1}(x)), D_{2}(\alpha^{k}\beta^{l}(y)))\\
&= \mu([D_{1}, D_{2}](x), \alpha^{k + k'}\beta^{l+l'}(y)).
\end{align*}
Similarly, $[D_{1}, D_{2}](\{x, y\})=\{[D_{1}, D_{2}](x), \alpha^{k + k'}\beta^{l+l'}(y)\}=0$.

Hence, we have $[D_{1}, D_{2}] \in ZDer_{(\alpha^{k + k'},\beta^{l+l'})}(A)$.
Therefore, $ZDer(A)$ is a BiHom-ideal of $Der(A)$.
\end{enumerate}
\end{proof}

\begin{lem}
Let $(A,\{\cdot,\cdot\},\mu, \a,\b)$  be a BiHom-Poisson algebra,
 then the following statements hold:
\begin{enumerate}
\item $[Der(A), C(A)]\subseteq C(A)$.

\item $[QDer(A), QC(A)]\subseteq QC(A)$.

\item
$[QC(A), QC(A)]\subseteq QDer(A)$.

\item
$C(A)\subseteq QDer(A)$.

\item
$QDer(A)+QC(A)\subseteq GDer(A)$.

\item
$C(A) \circ Der(A) \subseteq Der(A)$.
\end{enumerate}
\end{lem}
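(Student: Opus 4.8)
The plan is to prove each of the six inclusions by a direct verification, using the defining identities for $Der$, $C$, $QDer$, $QC$ and $GDer$ together with the commutativity relations $w\circ\alpha=\alpha\circ w$, $w\circ\beta=\beta\circ w$ that all these maps satisfy (so that composites stay inside $\mathcal{W}$, hence all degree bookkeeping with $\alpha^{k}\beta^{l}$ goes through cleanly). Throughout I will write the product generically, since every identity has to be checked separately for $\mu$ and for $\{\cdot,\cdot\}$, and the computations are formally identical.

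For item~(1), I take $D\in Der_{(\alpha^{k},\beta^{l})}(A)$ and $T\in C_{(\alpha^{k'},\beta^{l'})}(A)$ and compute $[D,T](\mu(x,y))=D(T(\mu(x,y)))-T(D(\mu(x,y)))$; expanding $T(\mu(x,y))=\mu(T(x),\alpha^{k'}\beta^{l'}(y))$ and using the derivation property of $D$, then doing the symmetric expansion of $T(D(\mu(x,y)))$ and using the centroid property of $T$ again, the four cross terms cancel in pairs and one is left with $\mu([D,T](x),\alpha^{k+k'}\beta^{l+l'}(y))=[D,T](\mu(x,y))$, and likewise equal to $\mu(\alpha^{k+k'}\beta^{l+l'}(x),[D,T](y))$; the same computation for $\{\cdot,\cdot\}$ shows $[D,T]\in C_{(\alpha^{k+k'},\beta^{l+l'})}(A)$. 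Item~(2) is the same argument with $D\in QDer$ and $T\in QC$: here one uses that $D$ has associated $D',D''$ and that $T$ satisfies only $\mu(T(x),\alpha^{k'}\beta^{l'}(y))=\mu(\alpha^{k'}\beta^{l'}(x),T(y))$, and one checks that $[D,T]$ again lies in $QC$ (only the symmetry of the two-variable expression is needed, not an ambient identity). Item~(4), $C(A)\subseteq QDer(A)$, is immediate: a centroid $T$ with parameter $(\alpha^{k},\beta^{l})$ satisfies $\mu(T(x),\alpha^{k}\beta^{l}(y))+\mu(\alpha^{k}\beta^{l}(x),T(y))=2T(\mu(x,y))$, so $T$ is a $(\alpha^{k},\beta^{l})$-quasiderivation with $D'=D''=2T$; similarly item~(6) $C(A)\circ Der(A)\subseteq Der(A)$ follows by writing $T\circ D$ for $T\in C_{(\alpha^{k'},\beta^{l'})}$, $D\in Der_{(\alpha^{k},\beta^{l})}$, applying first the derivation property of $D$ and then the centroid property of $T$ to each of the two resulting terms, which produces exactly the Leibniz rule for $T\circ D$ with parameter $(\alpha^{k+k'},\beta^{l+l'})$.

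For item~(5), $QDer(A)+QC(A)\subseteq GDer(A)$, I observe that a quasiderivation is already a generalized derivation (take the same $D'$, $D''$), and a quasicentroid $T\in QC_{(\alpha^{k},\beta^{l})}$ is a generalized derivation with the choice $D'=-T$, $D''=0$: indeed $\mu(T(x),\alpha^{k}\beta^{l}(y))+\mu(\alpha^{k}\beta^{l}(x),-T(y))=0=D''(\mu(x,y))$ by the quasicentroid symmetry, and likewise for the bracket; since $GDer(A)$ is closed under sums (it is a subspace of $\mathcal{W}$, as shown in the preceding Proposition), the sum of a quasiderivation and a quasicentroid lies in $GDer(A)$. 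Item~(3), $[QC(A),QC(A)]\subseteq QDer(A)$, is the one I expect to be the main obstacle: for $T_1\in QC_{(\alpha^{k},\beta^{l})}$, $T_2\in QC_{(\alpha^{k'},\beta^{l'})}$ one must produce the maps $D',D''$ witnessing that $[T_1,T_2]$ is a quasiderivation, and here a plain expansion does not close up on its own — one needs the BiHom-commutativity and BiHom-associativity of $\mu$ (resp. the BiHom-skew-symmetry and BiHom-Jacobi identity for $\{\cdot,\cdot\}$) to rewrite terms such as $\mu(T_1(x),\mu(\cdots))$ and move the quasicentroid symmetry past the second product. The strategy there will be to expand $\{[T_1,T_2](x),\alpha^{k+k'}\beta^{l+l'}(y)\}$, insert the quasicentroid relations for $T_1$ and $T_2$ one at a time, reassociate using the BiHom-associativity/Jacobi identity so that $y$ migrates into the innermost slot, and read off $D'':=0$ while $D'$ turns out to be a combination of $T_1\circ T_2$ and $T_2\circ T_1$ composed with powers of $\alpha,\beta$; the bookkeeping of the twisting maps is delicate but mechanical. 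Finally I will remark that, by symmetry of the proofs, the analogous statements with $\mu$ replaced by $\{\cdot,\cdot\}$ hold verbatim, so each inclusion is established on both structures simultaneously, completing the proof.
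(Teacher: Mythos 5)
Items (1), (2), (4) and (6) of your plan coincide with the paper's proof: direct expansion of the commutator (resp.\ the composite $T\circ D$) using the defining identities and the commutation with $\alpha,\beta$, carried out separately for $\mu$ and $\{\cdot,\cdot\}$. For item (5) you take a slightly different and in fact cleaner route than the paper: you exhibit a quasicentroid $T$ as a generalized derivation with witnesses $D'=-T$, $D''=0$ and then invoke closure of $\mathrm{GDer}_{(\alpha^{k},\beta^{l})}(A)$ under sums, whereas the paper verifies the generalized-derivation identity for $D_1+D_2$ directly; both arguments are valid (and both inherit the same small definitional wrinkle of the paper, namely that the quasiderivation witnesses for the bracket and for $\mu$ need not be a single map as the $\mathrm{GDer}$ definition formally demands).

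The genuine gap is item (3), which you leave as a sketch and whose diagnosis is wrong. You claim that ``a plain expansion does not close up on its own'' and that one must invoke BiHom-commutativity, BiHom-associativity, or the BiHom-Jacobi identity to handle terms like $\mu(T_1(x),\mu(\cdots))$; but no nested products ever occur in the quasiderivation condition for $[D_1,D_2]$, so there is nothing to reassociate, and the proposed detour does not apply as stated. The plain expansion does close: for $D_1\in QC_{(\alpha^{k},\beta^{l})}(A)$, $D_2\in QC_{(\alpha^{k'},\beta^{l'})}(A)$ one has
\begin{align*}
\{D_1D_2(x),\alpha^{k+k'}\beta^{l+l'}(y)\}
&=\{\alpha^{k}\beta^{l}(D_2(x)),D_1(\alpha^{k'}\beta^{l'}(y))\}
=\{D_2(\alpha^{k}\beta^{l}(x)),\alpha^{k'}\beta^{l'}(D_1(y))\}\\
&=\{\alpha^{k+k'}\beta^{l+l'}(x),D_2D_1(y)\},
\end{align*}
where the first equality is the quasicentroid property of $D_1$, the second is commutation of $D_1,D_2$ with $\alpha,\beta$, and the third is the quasicentroid property of $D_2$; the analogous identity holds with $D_1$ and $D_2$ interchanged, and subtracting gives $\{[D_1,D_2](x),\alpha^{k+k'}\beta^{l+l'}(y)\}+\{\alpha^{k+k'}\beta^{l+l'}(x),[D_1,D_2](y)\}=0$, with the same computation verbatim for $\mu$. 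Hence $[D_1,D_2]\in QDer_{(\alpha^{k+k'},\beta^{l+l'})}(A)$ with witnesses $D'=D''=0$ (the zero maps trivially commute with $\alpha,\beta$); your expectation that $D'$ would be a nonzero combination of $T_1\circ T_2$ and $T_2\circ T_1$ twisted by powers of $\alpha,\beta$ is also off the mark. This is exactly the paper's argument, and without it your proposal does not actually prove inclusion (3).
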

\begin{proof}
\begin{enumerate}
\item
Suppose that $D_{1} \in Der_{(\alpha^{k},\beta^{l})}(A)$, $D_{2} \in C_{(\alpha^{k'}\beta^{l'})}(A)$. Then for any $x, y \in A$, we have
\begin{align*}
&[D_{1}, D_{2}](\{x, y\})\\
&= D_{1} \circ D_{2}(\{x, y\}) - D_{2} \circ D_{1}(\{x, y\})\\
&= D_{1}(\{D_{2}(x), \alpha^{k'}\beta^{l'}(y)\}) - D_{2}(\{D_{1}(x), \alpha^{k}\beta^{l}(y)\} + \{\alpha^{k}\beta^{l}(x), D_{1}(y)\})\\
&= \{D_{1}(D_{2}(x)), \alpha^{k + k'}\beta^{l+l'}(y)\} + \{\alpha^{k}\beta^{l}(D_{2}(x)), D_{1}(\alpha^{k'}\beta^{l'}(y))\} - \{D_{2}(D_{1}(x)), \alpha^{k + k'}\beta^{l+l'}(y)\}\\
&- \{D_{2}(\alpha^{k}\beta^{l}(x)), \alpha^{k'}\beta^{l'}(D_{1}(y))\}\\
&= \{[D_{1}, D_{2}](x), \alpha^{k + k'}\beta^{l+l'}(y)\}.
\end{align*}
Similarly, we have $[D_{1}, D_{2}](\{x, y\}) = \{\alpha^{k + k'}\beta^{l+l'}(x), [D_{1}, D_{2}](y)\}$.

In the same way, we have
$[D_{1}, D_{2}](\mu(x, y))= \mu([D_{1}, D_{2}](x), \alpha^{k + k'}\beta^{l+l'}(y))\\ = \mu(\alpha^{k + k'}\beta^{l+l'}(x), [D_{1}, D_{2}](y))$.

Hence, $[D_{1}, D_{2}] \in C_{(\alpha^{k + k'},\beta^{l+l'})}(A)$. Therefore $[D_{1}, D_{2}] \in C(A)$.

\item
 Suppose that $D_{1} \in QDer_{(\alpha^{k},\beta^{l})}(A)$, $D_{2} \in QC_{(\alpha^{k'},\beta^{l'})}(A)$. Then for any $x, y \in A$, we have
\begin{align*}
&\{[D_{1}, D_{2}](x), \alpha^{k + k'}\beta^{l+l'}(y)\}\\
&= \{D_{1} \circ D_{2}(x), \alpha^{k + k'}\beta^{l+l'}(y)\} - \{D_{2} \circ D_{1}(x), \alpha^{k + k'}\beta^{l+l'}(y)\}\\
&= D_{1}^{'}(\{D_{2}(x), \alpha^{k'}\beta^{l'}(y)\}) - \{\alpha^{k}\beta^{l}(D_{2}(x)), D_{1}(\alpha^{k'}\beta^{l'}(y))\} \\&- \{\alpha^{k'}\beta^{l'}(D_{1}(x)), D_{2}(\alpha^{k}\beta^{l}(y))\}\\
&= D_{1}^{'}(\{\alpha^{k'}\beta^{l'}(x), D_{2}(y)\}) - \{\alpha^{k}\beta^{l}(D_{2}(x)), D_{1}(\alpha^{k'}\beta^{l'}(y))\}\\& - \{\alpha^{k'}\beta^{l'}(D_{1}(x)), D_{2}(\alpha^{k}\beta^{l}(y))\}\\
&= \{D_{1}(\alpha^{k'}\beta^{l'}(x)), \alpha^{k}\beta^{l}(D_{2}(y))\} + \{\alpha^{k + k'}\beta^{l+l'}(x), D_{1}(D_{2}(y))\} \\&- \{\alpha^{k}\beta^{l}(D_{2}(x)), D_{1}(\alpha^{k'}\beta^{l'}(y))\}
- \{\alpha^{k'}\beta^{l'}(D_{1}(x)), D_{2}(\alpha^{k}\beta^{l}(y))\}\\
&= \{\alpha^{k + k'}\beta^{l+l'}(x), D_{1}(D_{2}(y))\} - \{\alpha^{k}\beta^{l}(D_{2}(x)), D_{1}(\alpha^{k'}\beta^{l'}(y))\}\\
&= \{\alpha^{k + k'}\beta^{l+l'}(x), D_{1}(D_{2}(y))\} - \{D_{2}(\alpha^{k}\beta^{l}(x)), \alpha^{k'}\beta^{l'}(D_{1}(y))\}\\
&= \{\alpha^{k + k'}\beta^{l+l'}(x), D_{1}(D_{2}(y))\} - \{\alpha^{k + k'}\beta^{l+l'}(x), D_{2}(D_{1}(y))\}\\
&= \{\alpha^{k + k'}\beta^{l+l'}(x), [D_{1}, D_{2}](y)\}.
\end{align*}
Similarly, $\mu([D_{1}, D_{2}](x), \alpha^{k + k'}\beta^{l+l'}(y))= \mu(\alpha^{k + k'}\beta^{l+l'}(x), [D_{1}, D_{2}](y)).$

Hence, we have $[D_{1}, D_{2}] \in QC_{(\alpha^{k + k'},\beta^{l+l'})}(A)$. So $[QDer(A), QC(A)] \subseteq QC(A)$.

\item
Suppose that $D_{1} \in QC_{(\alpha^{k},\beta^{l})}(A)$, $D_{2} \in QC_{(\alpha^{k'},\beta^{l'})}(A)$. Then for any $x, y \in A$, we have
\begin{align*}
&\{[D_{1}, D_{2}] (x), \alpha^{k + k'}\beta^{l+l'}(y)\}\\
&= \{D_{1} \circ D_{2}(x), \alpha^{k + k'}\beta^{l+l'}(y)\} - \{D_{2} \circ D_{1}(x), \alpha^{k + k'}\beta^{l+l'}(y)\}\\
&= \{\alpha^{k}\beta^{l}(D_{2}(x)), D_{1}(\alpha^{k'}\beta^{l'}(y))\} - \{\alpha^{k'}\beta^{l'}(D_{1}(x)), D_{2}(\alpha^{k}\beta^{l}(y))\}\\
&= \{D_{2}(\alpha^{k}\beta^{l}(x)), \alpha^{k'}\beta^{l'}(D_{1}(y))\} - \{D_{1}(\alpha^{k'}\beta^{l'}(x)), \alpha^{k}\beta^{l}(D_{2}(y))\}\\
&= \{\alpha^{k + k'}\beta^{l+l'}(x), D_{2}(D_{1}(y))\} - \{\alpha^{k + k'}\beta^{l+l'}(x), D_{1}(D_{2}(y))\}\\
&= -\mu(\alpha^{k + k'}\beta^{l+l'}(x), [D_{1}, D_{2}] (y)),
\end{align*}
i.e., $\{[D_{1}, D_{2}] (x), \alpha^{k + k'}\beta^{l+l'}(y)\} + \{\alpha^{k + k'}\beta^{l+l'}(x), [D_{1}, D_{2}]\} = 0$.

Similarly, $\mu([D_{1}, D_{2}] (x), \alpha^{k + k'}\beta^{l+l'}(y)) + \mu(\alpha^{k + k'}\beta^{l+l'}(x), [D_{1}, D_{2}] ) = 0$.
Hence, we have $[D_{1}, D_{2}]\in QDer_{(\alpha^{k + k'},\beta^{l+l'})}(A)$, which implies that $[QC(A), QC(A)] \subseteq QDer(A)$.
\item
Suppose that $D \in C_{(\alpha^{k},\beta^{l})}(A)$. Then for any $x, y \in A$, we have
\[D(\{x, y\}) = \{D(x), \alpha^{k}\beta^{l}(y)\} = \{\alpha^{k}\beta^{l}(x), D(y)\}.\]

Hence, we have
\[\{D(x), \alpha^{k}\beta^{l}(y)\} + \{\alpha^{k}\beta^{l}(x), D(y)\} = 2D(\{x, y\}),\]
Similarly, $\mu(D(x), \alpha^{k}\beta^{l}(y)) + \mu(\alpha^{k}\beta^{l}(x), D(y)) = 2D(\mu(x, y)),$

which implies that $D \in QDer_{(\alpha^{k},\beta^{l})}(A)$. So $C(A) \subseteq QDer(A)$.
\item
In fact. Let $D_1\in QDer_{(\a^{k},\b^{l})}(A), D_2\in QC_{(\a^{k},\b^l)}(A)$. Then there exist $D'_1, D''_1\in End(A)$, for any $x,y\in A$, we have
\begin{eqnarray*}
&&\{D_1(x), \a^{k}\b^{l}(y)\}+\{\a^{k}\b^{l}(x), D_1(y)\}=D_1'(\{x, y\}),\\
&&\mu(D_1(x),\a^{k}\b^{l}(y))+\mu(\a^{k}\b^{l}(x), D_1(y))
=D_1''(\mu(x,y)).
\end{eqnarray*}
Thus, for any $x, y\in A$, we have
\begin{eqnarray*}
\{(D_1+D_2)(x), \a^{k}\b^{l}(y)\}&=&\{D_1(x), \a^{k}\b^{l}(y)\}+\{ D_2(x), \a^{k}\b^{l}(y)\}\\
&=& D_1'(\{x, y\})-\{\a^{k}\b^{l}(x), D_1(y)\}+\{ \a^{k}\b^{l}(x),D_2(y)\}\\
&=& D_1'(\{x, y\})-\{\a^{k}\b^{l}(x), (D_1-D_2)(y)\},
\end{eqnarray*}
and
\begin{eqnarray*}
\mu((D_1+D_2)(x), \a^{k}\b^{l}(y))&=&\mu(D_1(x), \a^{k}\b^{l}(y))+\mu( D_2(x), \a^{k}\b^{l}(y)\}\\
&=& D_1''(\mu(x, y))-\mu(\a^{k}\b^{l}(x), D_1(y))+\mu( \a^{k}\b^{l}(x),D_2(y))\\
&=& D_1''(\mu(x, y))-\mu(\a^{k}\b^{l}(x), (D_1-D_2)(y)),
\end{eqnarray*}
Therefore, $D_1+D_2\in GDer_{(\a^{k},\b^{l})}(A)$.

\item
Suppose that $D_{1} \in C_{(\alpha^{k},\beta^{l})}(A)$, $D_{2} \in Der_{(\alpha^{k'},\beta^{l'})}(A)$. Then for any $x, y \in A$, we have
\begin{align*}
&D_{1} \circ D_{2}(\{x, y\})\\
&= D_{1}(\{D_{2}(x), \alpha^{k'}\beta^{l'}(y)\} + \{\alpha^{k'}\beta^{l'}(x), D_{2}(y)\})\\
&= \{D_{1}(D_{2}(x)), \alpha^{k + k'}\beta^{l+l'}(y)\} + \{\alpha^{k + k'}\beta^{l+l'}(x), D_{1}(D_{2}(y))\},
\end{align*}
\begin{align*}
&D_{1} \circ D_{2}(\mu(x, y))\\
&= D_{1}(\mu(D_{2}(x), \alpha^{k'}\beta^{l'}(y)) + \mu(\alpha^{k'}\beta^{l'}(x), D_{2}(y)))\\
&= \mu(D_{1}(D_{2}(x)), \alpha^{k + k'}\beta^{l+l'}(y)) + \mu(\alpha^{k + k'}\beta^{l+l'}(x), D_{1}(D_{2}(y))),
\end{align*}
which implies that $D_{1} \circ D_{2} \in Der_{(\alpha^{k + k'},\beta^{l+l'})}(A)$. So $C(A) \circ Der(A) \subseteq Der(A)$.
\end{enumerate}
\end{proof}
\medskip
\begin{thm}
Let $(A,\{\cdot,\cdot\},\mu, \a,\b)$  be a BiHom-Poisson algebra, $\a$ and $\b$  surjections,
 then $[C(A), QC(A)]\subseteq End(A,Z(A))$.
  Moreover, if $Z(A)=\{0\}$, then  $[C(A), QC(A)]=\{0\}$.
\end{thm}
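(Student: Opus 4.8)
The plan is to verify the two defining conditions of $Z(A)$ on the range of $[D_1,D_2]$ for homogeneous $D_1,D_2$, then pass to the full graded spaces using surjectivity of $\a,\b$; here $End(A,Z(A))$ means the endomorphisms of $A$ whose image lies in $Z(A)$. First I would fix $D_1\in C_{(\a^k,\b^l)}(A)$, $D_2\in QC_{(\a^{k'},\b^{l'})}(A)$ and $x,y\in A$, and write $\a^{k+k'}\b^{l+l'}(y)$ as $\a^k\b^l(\a^{k'}\b^{l'}(y))$ or as $\a^{k'}\b^{l'}(\a^k\b^l(y))$ depending on which factor I want to peel off. In the summand $\{D_1D_2(x),\a^{k+k'}\b^{l+l'}(y)\}$ I would apply the centroid identity for $D_1$ (item~2 of its definition) to pull out $D_1$, getting $D_1(\{D_2(x),\a^{k'}\b^{l'}(y)\})$, and then the quasicentroid identity for $D_2$ to rewrite the inner bracket as $\{\a^{k'}\b^{l'}(x),D_2(y)\}$. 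In the summand $\{D_2D_1(x),\a^{k+k'}\b^{l+l'}(y)\}$ I would first apply the quasicentroid identity for $D_2$, then commute $\a^{k'}\b^{l'}$ past $D_1$ and $\a^k\b^l$ past $D_2$ (item~1 of both definitions), and finally apply the centroid identity for $D_1$ once more; this again produces $D_1(\{\a^{k'}\b^{l'}(x),D_2(y)\})$. Subtracting gives $\{[D_1,D_2](x),\a^{k+k'}\b^{l+l'}(y)\}=0$, and the identical chain of substitutions with $\mu$ in place of $\{\cdot,\cdot\}$ (using item~3 of each definition) gives $\mu([D_1,D_2](x),\a^{k+k'}\b^{l+l'}(y))=0$.

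Since $\a$ and $\b$ are surjective, so is $\a^{k+k'}\b^{l+l'}$, hence for every $z\in A$ we get $\{[D_1,D_2](x),z\}=0$ and $\mu([D_1,D_2](x),z)=0$, i.e. $[D_1,D_2](x)\in Z(A)$ for all $x\in A$. Moreover $[D_1,D_2]$ commutes with $\a$ and $\b$ because $D_1$ and $D_2$ do, so $[D_1,D_2]\in End(A,Z(A))$. An arbitrary element of $[C(A),QC(A)]$ is a finite sum of such commutators of homogeneous components, and $End(A,Z(A))$ is a linear subspace of $End(A)$, so $[C(A),QC(A)]\subseteq End(A,Z(A))$. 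If in addition $Z(A)=\{0\}$, then each $[D_1,D_2]$ is the zero endomorphism, and therefore $[C(A),QC(A)]=\{0\}$.

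The only genuine obstacle is bookkeeping. The centroid relation is a string of three equal expressions $\{D_1(u),\a^k\b^l(v)\}=\{\a^k\b^l(u),D_1(v)\}=D_1(\{u,v\})$, and it must be invoked in the correct direction at each step so that the two expansions collapse to the \emph{same} element and not merely to superficially similar ones; keeping the exponents $k,l,k',l'$ matched across the two computations and using the commutation of $\a^k\b^l,\a^{k'}\b^{l'}$ with $D_1,D_2$ at exactly the right moment is what makes the difference vanish. Everything else is a routine repetition of the same computation for the BiHom-associative product $\mu$.
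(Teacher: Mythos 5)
Your proposal is correct and follows essentially the same route as the paper: expand $[D_1,D_2]$, use the centroid identity for $D_1$, the quasicentroid identity for $D_2$ and their commutation with $\a,\b$ to collapse both summands to a common expression (the paper writes it as $D_1(\{D_2(x),\a^{k'}\b^{l'}(y')\})$, you as $D_1(\{\a^{k'}\b^{l'}(x),D_2(y)\})$, which agree by the quasicentroid identity), then invoke surjectivity of $\a^{k+k'}\b^{l+l'}$ and repeat with $\mu$. The only cosmetic differences are that the paper substitutes $y=\a^{k+k'}\b^{l+l'}(y')$ at the start rather than at the end, and you spell out the passage from homogeneous components to the full graded spaces, which the paper leaves implicit.
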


\begin{proof}
For any $D_1\in C_{(\a^{k},\b^{l})}(A), D_2\in QC_{(\a^{k'},\b^{l'})}(A)$ and $x,y\in A$, since $\a$ and $\b$ are surjections, there exist $y'\in A$ such that $y=\a^{k+k'}\b^{l+l'}(y')$, we have
\begin{eqnarray*}
\{[D_1, D_2](x), y\}&=&\{\{D_1, D_2\}(x), \a^{k+k'}\b^{l+l'}(y')\}\\
&=& \{D_1D_2(x), \a^{k+k'}\b^{l+l'}(y')\}-\{D_2D_1(x), \a^{k+k'}\b^{l+l'}(y')\}\\
&=& D_1(\{D_2(x), \a^{k'}\b^{l'}(y')\})-\{\a^{k'}\b^{l'}D_1(x), D_2\a^{k}\b^{l}(y')\}\\
&=& D_1(\{D_2(x), \a^{k'}\b^{l'}(y')\})-D_1(\{D_2(x), \a^{k'}\b^{l'}(y')\})\\
&=& 0,
\end{eqnarray*}
and
\begin{eqnarray*}
\mu([D_1, D_2](x), y)&=&\mu([D_1, D_2](x), \a^{k+k'}\b^{l+l'}(y'))\\
&=& \mu(D_1D_2(x), \a^{k+k'}\b^{l+l'}(y'))-\mu(D_2D_1(x), \a^{k+k'}\b^{l+l'}(y'))\\
&=& D_1(\mu(D_2(x), \a^{k'}\b^{l'}(y')))-\mu(\a^{k'}\b^{l'}D_1(x), D_2\a^{k}\b^{l}(y'))\\
&=& D_1(\mu(D_2(x), \a^{k'}\b^{l'}(y')))-D_1(\mu(D_2(x), \a^{k'}\b^{l'}(y')))\\
&=& 0.
\end{eqnarray*}
So $[D_1,D_2](x)\in Z(A)$ and therefore $[C(A), QC(A)]\subseteq End(A,Z(A))$.
 Moreover, if $Z(A)=\{0\}$, then  it is easy to see that $[C(A), QC(A)]=\{0\}$.
\end{proof}
\section{BiHom-Poisson modules}
First, let recall the following.
\begin{defn}
Let $(A,\{\cdot,\cdot\}_A, \mu_A )$ be a Poisson algebra. Then a left Poisson module structure on a
left $A$-module over $A$ is  linear maps
$\mu_M,\{\cdot,\cdot\}_M: A\otimes M\longrightarrow M$
such that
\begin{eqnarray}
\mu_M(a,\mu_M(b,m))=\mu_M(\mu_A(a,b),m)\label{wid}\\
\{\{a,b\}_A,m\}_M=\{a,\{b,m\}_M \}_M-\{b,\{a,m\}_M\}_M \\
\{a,\mu_M(b,m)\}_M=\mu_M(\{a, b\}_A,m)+ \mu_M(b,\{a, m\}_M) \\
\{\mu_A(a,b), m\}_M =\mu_M(a,\{b, m\}_M)+\mu_M(b,\{a, m\}_M)
\end{eqnarray}
for any $a, b\in A$ and $m\in M.$
\end{defn}
\begin{rmk}
 In \cite{LWZ}, Poisson algebras are defined without the associativity assumption and then, left Poisson modules are defined without the identity (\ref{wid}).
In a similar way, basing on the definition above, one can defined a right Poisson module.
\end{rmk}
\begin{defn}
Let $(A, \{\cdot,\cdot\},\mu,\alpha,\beta)$ be a BiHom-Poisson algebra.
\begin{enumerate}
\item A left BiHom-Poisson $A$-module is a BiHom-module $(V,\phi,\psi)$  with  structure maps  $\lambda:A\otimes V\longrightarrow V$  and  $\rho:A\otimes V\longrightarrow V$ such that the following equalities hold:
\begin{eqnarray}
\lambda(\alpha(x),\lambda(y,v))&=&\lambda(\mu(x,y),\psi(v))\label{lbhm1}\\
\rho(\{\beta(x),y \},\psi(v))&=&\rho(\alpha\beta(x),\rho(y,v))-\rho(\beta(y),\rho(\alpha(x),v))\label{lbhm2}\\
\rho(\alpha\beta(x),\lambda(y,v))&=& \lambda(\{\beta(x),y\},\psi(v))+\lambda(\beta(y),\rho(\alpha(x),v))\label{lbhm3}\\
\rho(\mu(\beta(x),y),\psi(v))&=&\lambda(\alpha\beta(x),\rho(y,v))+\lambda(\beta(y),\rho(\alpha(x),v))
\label{lbhm4}
\end{eqnarray}
\item A right BiHom-Poisson $A$-module is a BiHom-module $(V,\phi,\psi)$  with  structure maps  $\wedge:V\otimes A\longrightarrow V$  and  $\delta:V\otimes A\longrightarrow V$ such that the following equalities hold:
\begin{eqnarray}
\wedge(\wedge(v,x),\beta(y))&=&\wedge(\phi(v),\mu(x,y))\label{ebhm1}\\
\delta(\phi(v),\{x,\alpha(y)\})&=&\delta(\delta(v,x),\alpha\beta(y))-\delta(\delta(v,\beta(x)),\alpha(y))\label{rbhm2}\\
\delta(\wedge(v,x),\alpha\beta(y))&=&\delta(\phi(v),\{x,\alpha(y)\})+
\wedge(\delta(v,\beta(x)),\alpha(y))\label{rbhm3}\\
\delta(\phi(v),\mu(x,\alpha(y)))&=&\wedge(\delta(v,x),\alpha\beta(y))+\wedge(
\delta(v,\beta(x)),\alpha(y))\label{rbhm4}
\end{eqnarray}
\end{enumerate}
\end{defn}
\begin{rmk}
\begin{enumerate}
\item A left BiHom-Poisson $A$-module is a BiHom-module $(V,\phi,\psi)$  with  structure maps  $\lambda:A\otimes V\longrightarrow V$  and  $\rho:A\otimes V\longrightarrow V$ such that $(V,\phi,\psi,\lambda)$ is a left Bihom-associative $A$-module, $(V,\phi,\psi,\rho)$ is a left BiHom-Lie $A$-module 
\cite{hadimi}\cite{yong} and (\ref{lbhm3}) and (\ref{lbhm4}) hold.  Similarly a right BiHom-Poisson $A$-module is a BiHom-module $(V,\phi,\psi)$  with  structure maps  $\wedge:V\otimes A\longrightarrow V$  and  $\delta:V\otimes A\longrightarrow V$ such that $(V,\phi,\psi,\wedge)$ is a right Bihom-associative $A$-module, $(V,\phi,\psi,\delta)$ is a right BiHom-Lie $A$-module and (\ref{rbhm3}) and (\ref{rbhm4}) hold.
\item If $\alpha=\beta=Id$ and $\phi=\psi=Id$, we recover a left (respectively a right) Poisson module. Thus if $(A,\{\cdot,\cdot\}, \mu)$ is a Poisson algebra and $V$ is a left Poisson $A$-module  in the usual sense, then $(V,Id_{V},Id_{V})$ is a left BiHom-Poisson $\mathbb{A}$-module where $\mathbb{A}=(A, \{\cdot,\cdot\}, \mu,  Id_{A},
Id_{A})$ is a BiHom-Poisson algebra.
\end{enumerate}
\end{rmk}
The following results allow to give some examples of left BiHom-Poisson $A$-modules.
\begin{prop}\label{prop1}
Let $(A,\{\cdot,\cdot\},\mu,\alpha,\beta)$ be a regular BiHom-Poisson algebra. Then $(A,\alpha,\beta)$ is a left BiHom-Poisson $A$-module where the structure maps are $\lambda(a,b)=\mu(a,b)$ and $\rho(a,b)=\{a,b\}$. More generally, if $B$ is a left  BiHom-ideal of $(A,\{\cdot,\cdot\},\mu,\alpha,\beta)$, then $(B,\alpha,\beta)$ is a left BiHom-Poisson
$A$-module where the structure maps are $\lambda(a,x)=\mu(a,x),\ \rho(x,a)=\{x,a\}$ for all $x\in B$ and $(a,b)\in A^{\times 2}$.
\end{prop}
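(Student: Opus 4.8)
The plan is to verify, identity by identity, that the data $(V,\phi,\psi)=(A,\alpha,\beta)$ together with $\lambda=\mu$ and $\rho=\{\cdot,\cdot\}$ satisfies the four defining equations \eqref{lbhm1}--\eqref{lbhm4} of a left BiHom-Poisson $A$-module. The underlying BiHom-module axiom $\alpha\beta=\beta\alpha$ is part of the hypothesis, and the requirement that $\lambda=\mu$ be a morphism of BiHom-modules, i.e. $\alpha\circ\mu=\mu\circ(\alpha\otimes\alpha)$ and $\beta\circ\mu=\mu\circ(\beta\otimes\beta)$, is just multiplicativity. So everything reduces to checking \eqref{lbhm1}--\eqref{lbhm4}.

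Two of the four are read off directly. Equation \eqref{lbhm1}, which for our data says $\mu(\alpha(x),\mu(y,v))=\mu(\mu(x,y),\beta(v))$, is exactly the BiHom-associativity identity \eqref{BiHom ass identity} with $v$ in the last slot, i.e. $as_A(x,y,v)=0$. Equation \eqref{lbhm3}, which says $\{\alpha\beta(x),\mu(y,v)\}=\mu(\{\beta(x),y\},\beta(v))+\mu(\beta(y),\{\alpha(x),v\})$, is verbatim the BiHom-Leibniz identity \eqref{BHL}. For \eqref{lbhm2}, namely $\{\{\beta(x),y\},\beta(v)\}=\{\alpha\beta(x),\{y,v\}\}-\{\beta(y),\{\alpha(x),v\}\}$, I would observe that this is precisely the left BiHom-Lie module axiom \eqref{Li1} specialised to $\rho=\{\cdot,\cdot\}$ and $\psi=\beta$; in other words it asserts that $(A,\alpha,\beta)$ is the adjoint left BiHom-Lie module over the BiHom-Lie algebra $(A,\{\cdot,\cdot\},\alpha,\beta)$, which follows from BiHom-skew-symmetry together with the BiHom-Jacobi identity \eqref{BiHJacob} (cf. \cite{hadimi}).

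The substantive point is \eqref{lbhm4}, the "opposite" Leibniz rule
\[
\{\mu(\beta(x),y),\beta(v)\}=\mu(\alpha\beta(x),\{y,v\})+\mu(\beta(y),\{\alpha(x),v\}),
\]
which is not among the defining axioms of a BiHom-Poisson algebra and has to be deduced; this is where regularity of $A$ is genuinely used. The plan is: first apply BiHom-skew-symmetry to interchange the two entries of the bracket on the left, expressing the required pre-twists through $\alpha^{-1}$ and $\beta^{-1}$ and pulling the twisting maps through $\mu$ by multiplicativity, which turns the left-hand side into a bracket of the form $\{\,\cdot\,,\mu(\,\cdot\,,\,\cdot\,)\}$; then apply the BiHom-Leibniz identity \eqref{BHL} to that bracket; then apply BiHom-skew-symmetry once more to each of the two brackets produced; and finally use the BiHom-commutativity of $\mu$ to move the twisting maps between the two tensor factors and collapse the result to the claimed right-hand side. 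The only delicate part of this argument is the bookkeeping of the exponents of $\alpha$ and $\beta$ along the chain of rewrites; once the twistings are normalised the identity drops out, and this is the step I expect to be the main obstacle.

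For the second assertion, let $B$ be a left BiHom-ideal of $A$. The inclusions $\alpha(B)\subseteq B$, $\beta(B)\subseteq B$, $\mu(A,B)\subseteq B$ and $\{A,B\}\subseteq B$ guarantee that $(B,\alpha|_B,\beta|_B)$ is a BiHom-module and that the restrictions of $\mu$ and $\{\cdot,\cdot\}$ give well-defined structure maps $A\otimes B\to B$. The four identities \eqref{lbhm1}--\eqref{lbhm4} for $B$ are then literal instances of the identities already established on $A$, now read with the relevant arguments taken in $B$, so no further computation is required.
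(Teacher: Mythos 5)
Your plan coincides with the paper's own proof: \eqref{lbhm1} is read off from BiHom-associativity, \eqref{lbhm3} is the BiHom-Leibniz identity \eqref{BHL} verbatim, \eqref{lbhm2} comes from BiHom-skew-symmetry plus the BiHom-Jacobi identity (the adjoint BiHom-Lie module), and \eqref{lbhm4} is established by exactly the chain of skew-symmetry, \eqref{BHL} and BiHom-commutativity rewrites you outline, with the BiHom-ideal case handled by restriction. One small correction: regularity is not confined to \eqref{lbhm4} — the paper's verification of \eqref{lbhm2} also uses $\alpha^{-1},\beta^{-1}$ to renormalise the twists via skew-symmetry before the BiHom-Jacobi identity can be applied, so the invertibility hypothesis enters there as well.
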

\begin{proof}
The fact that $\lambda$ and $\rho$ are structure maps follows from the multiplicativity of $\alpha$ and  $\beta$ with respect to $\mu$ and $\{\cdot,\cdot\}.$
Next, observe that from the BiHom-commutativity of $\mu$ and the Bihom-skew-symmetry of
$\{\cdot,\cdot\}$ that  $\mu(x,y)=\mu(\alpha^{-1}\beta(y),\alpha\beta^{-1}(x))$ and
$\{x,y\}=-\{\alpha^{-1}\beta(y),\alpha\beta^{-1}(x)\}$ for all $x,y\in A.$ Now, pick $(x,y,v)\in A^{\times 3}$ then, we have by the BiHom-associativity
\begin{eqnarray}
&&\lambda(\alpha(x),\lambda(y,v))=\mu(\alpha(x),\mu(y,v))=\mu(\mu(x,y),\beta(v))=
\lambda(\mu(x,y),\beta(v))\nonumber
\end{eqnarray}
Next, compute (\ref{lbhm2}) using the BiHom-Jacobi identity in the third line, as follows
\begin{eqnarray}
&&\rho(\{\beta(x),y\},\beta(v))=\{\{\beta(x),y\},\beta(v)\}=
\{\alpha^{-1}\beta(\beta(v)),\alpha\beta^{-1}\{\alpha^{-1}\beta(y),\alpha\beta^{-1}(\beta(x))\}\}\nonumber\\
&&=\{\beta^2(\alpha^{-1}(v)),\{y,\alpha^2\beta^{-1}(x)\}\}=
\{\beta^2(\alpha^{-1}(v)),\{\beta(\beta^{-1}(y)),\alpha(\alpha\beta^{-1}(x))\}\}
\nonumber\\
&&=-\{\beta^2(\beta^{-1}(y)),\{\beta(\alpha\beta^{-1}(x)),\alpha(\alpha^{-1}(v))\}\}
-\{\beta^2(\alpha\beta^{-1}(x)),\{\beta(\alpha^{-1}(v)),\alpha(\beta^{-1}(y))\}\}
\nonumber\\
&&=-\{\beta(y),\{\alpha(x),v)\}\}
-\{\alpha\beta(x),\{\beta\alpha^{-1}(v),\alpha\beta^{-1}(y)\}\}=-\{\beta(y),\{\alpha(x),v)\}\}\nonumber\\
&&+\{\alpha\beta(x),\{y,v\}\}=-\rho(\beta(y),\rho(\alpha(x),v))+\rho(\alpha\beta(x),\rho(y,v))\nonumber
\end{eqnarray}
Similarly, using (\ref{BHL}), we compute
\begin{eqnarray}
&&\rho(\alpha\beta(x),\lambda(y,v))=\{\alpha\beta(x),\mu(y,v)\}
=\mu(\{\beta(x),y\},\beta(v))+\mu(\beta(y),\{\alpha(x),v\})\nonumber\\
&&=\lambda(\{\beta(x),y\},\beta(v))+\lambda(\beta(y),\rho(\alpha(x),v))
 \mbox{\ which is (\ref{lbhm3})} \nonumber
\end{eqnarray}
Finally, we obtain (\ref{lbhm4}) as follows
\begin{eqnarray}
&& \rho(\mu(\beta(x),y),\beta(v))=\{\mu(\beta(x),y),\beta(v)\}=
-\{\alpha^{-1}\beta(\beta(v)),\alpha\beta^{-1}\mu(\beta(x),y)\}\nonumber\\
&&=-\{\alpha\beta(\alpha^{-2}\beta(v)),\mu(\alpha(x),\alpha\beta^{-1}(y))\}
=-\mu(\{\beta(\alpha^{-2}\beta(v)),\alpha(x)\},\beta(\beta^{-1}\alpha(y)))
\nonumber\\
&&-\mu(\beta(\alpha(x)),\{\alpha(\alpha^{-2}\beta(v)),\alpha\beta^{-1}(y)\})
\mbox{\  ( by (\ref{BHL})  )} \nonumber\\
&&=-\mu(\{\beta^2\alpha^{-2}(v),\alpha(x)\},\alpha(y))
-\mu(\beta\alpha(x),\{\alpha^{-1}\beta(v),\alpha\beta^{-1}(y)\})\nonumber\\
&&=-\mu(\alpha^{-1}\beta(\alpha(y)),\alpha\beta^{-1}\{\beta^2\alpha^{-2}(v),\alpha(x)\})+\mu(\alpha\beta(x),\{y,v\})\nonumber\\
&&=-\mu(\beta(y),\{\beta\alpha^{-1}(v),\alpha\beta^{-1}(\alpha(x))\})+\mu(\alpha\beta(x),\{y,v\})\nonumber\\
&&=\mu(\beta(y),\{\alpha(x),v\})+\mu(\alpha\beta(x),\{y,v\})
=\lambda(\beta(y),\rho(\alpha(x),v))+\lambda(\alpha\beta(x),\rho(y,v)).
\nonumber
\end{eqnarray}
Hence $(A,\alpha,\beta)$ is a left BiHom-Poisson $A$-module.
Similarly, we prove that more generally, any two-sided BiHom-ideal $(B,\alpha,\beta)$  of $(A,\{\cdot,\cdot\},\mu,\alpha,\beta)$ is a left BiHom-Poisson
$A$-module.
\end{proof}
\begin{rmk}
The analogous of Proposition \ref{prop1} can be proved for right BiHom-Poisson algebras.
\end{rmk}
More generally, we prove:
\begin{prop}
If $f:(A,\{\cdot,\cdot\}_A,\mu_A, \alpha_{A},\beta_{A})\longrightarrow(B,\{\cdot,\cdot\}_A, \mu_B,\alpha_{B},\beta_{B})$ is a morphism of BiHom-Poisson algebras and $\alpha_B$ and $\beta_B$ are invertible then, $(B,\alpha_{B},\beta_{B})$ becomes a
left BiHom-Poisson $A$-module via $f$, i.e, the structure maps are defined as $\lambda(a,b)=\mu_{B}(f(a),b)$ and $\rho(a,b)=\{f(a),b\}_B$ for all
$(a,b)\in A\times B$.
\end{prop}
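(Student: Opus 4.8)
The plan is to verify the four defining identities \eqref{lbhm1}--\eqref{lbhm4} directly for the structure maps $\lambda(a,b)=\mu_B(f(a),b)$ and $\rho(a,b)=\{f(a),b\}_B$ on the BiHom-module $(B,\phi,\psi):=(B,\alpha_B,\beta_B)$. The point is that, since $f$ is a morphism of BiHom-Poisson algebras, $f\alpha_A=\alpha_B f$, $f\beta_A=\beta_B f$, $f\circ\mu_A=\mu_B\circ(f\otimes f)$ and $f\circ\{\cdot,\cdot\}_A=\{\cdot,\cdot\}_B\circ(f\otimes f)$, so that each identity, once the arguments coming from $A$ are pushed through $f$, turns into a purely $B$-internal identity in the elements $f(x),f(y),v$. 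First I would record that $(B,\alpha_B,\beta_B)$ is a BiHom-module ($\alpha_B\beta_B=\beta_B\alpha_B$ by multiplicativity of $B$) and that $\lambda,\rho$ are morphisms of BiHom-modules $(A\otimes B,\alpha_A\otimes\alpha_B,\beta_A\otimes\beta_B)\to(B,\alpha_B,\beta_B)$; this is immediate from multiplicativity of $\alpha_B,\beta_B$ with respect to $\mu_B$ and $\{\cdot,\cdot\}_B$ together with $f\alpha_A=\alpha_B f$, $f\beta_A=\beta_B f$.

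Then I would treat the four identities in turn. Using $f\mu_A=\mu_B(f\otimes f)$ and $f\alpha_A=\alpha_B f$, identity \eqref{lbhm1} becomes exactly the BiHom-associativity of $\mu_B$ applied to $(f(x),f(y),v)$. Using in addition $f\{\cdot,\cdot\}_A=\{\cdot,\cdot\}_B(f\otimes f)$ and $f\beta_A=\beta_B f$, identity \eqref{lbhm3} becomes exactly the BiHom-Leibniz identity \eqref{BHL} of $B$ applied to $(f(x),f(y),v)$; neither of these two steps uses invertibility. Identity \eqref{lbhm2} becomes the assertion that $(B,\alpha_B,\beta_B)$, with the bracket action $a\otimes v\mapsto\{a,v\}_B$, is a left BiHom-Lie module over the BiHom-Lie algebra $(B,\{\cdot,\cdot\}_B,\alpha_B,\beta_B)$, i.e. identity \eqref{Li1} for $B$. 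Finally \eqref{lbhm4} becomes the identity
\[
\{\mu_B(\beta_B(a),b),\beta_B(v)\}_B=\mu_B(\alpha_B\beta_B(a),\{b,v\}_B)+\mu_B(\beta_B(b),\{\alpha_B(a),v\}_B)
\]
with $a=f(x)$, $b=f(y)$.

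The main obstacle is checking these last two $B$-internal identities, \eqref{lbhm2} and \eqref{lbhm4}. Unlike the first two, they are not one-line consequences of a single axiom of $B$: they are obtained by the same chain of rewritings used in the proof of Proposition \ref{prop1} --- repeatedly using $\mu_B(x,y)=\mu_B(\alpha_B^{-1}\beta_B(y),\alpha_B\beta_B^{-1}(x))$ (BiHom-commutativity) and $\{x,y\}_B=-\{\alpha_B^{-1}\beta_B(y),\alpha_B\beta_B^{-1}(x)\}$ (BiHom-skew-symmetry), together with the BiHom-Jacobi identity and \eqref{BHL} --- and these rewritings require precisely the invertibility of $\alpha_B$ and $\beta_B$ that is assumed. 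Since $(B,\{\cdot,\cdot\}_B,\mu_B,\alpha_B,\beta_B)$ is a BiHom-Poisson algebra whose twisting maps are invertible (hence automorphisms, so $B$ is regular), the computations of Proposition \ref{prop1} apply to $B$ verbatim; the only new bookkeeping is the insertion of $f$ on the arguments coming from $A$, which is harmless exactly because $f$ intertwines the structure maps of $A$ and $B$. Conceptually the statement is the combination of ``a regular BiHom-Poisson algebra is a module over itself'' (Proposition \ref{prop1} for $B$) with ``a module pulls back along a morphism'' (through $f$).
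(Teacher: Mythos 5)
Your proposal is correct and takes essentially the same route as the paper: the paper's proof verifies \eqref{lbhm1}--\eqref{lbhm4} by pushing the $A$-arguments through $f$ (using $f\alpha_A=\alpha_Bf$, $f\beta_A=\beta_Bf$, $f\mu_A=\mu_B(f\otimes f)$, $f\{\cdot,\cdot\}_A=\{\cdot,\cdot\}_B(f\otimes f)$) and then repeating, with $f(x),f(y),v$ in place of $x,y,v$, exactly the rewritings of Proposition \ref{prop1} inside $B$ (BiHom-associativity and \eqref{BHL} for \eqref{lbhm1} and \eqref{lbhm3}; BiHom-commutativity/skew-symmetry together with the BiHom-Jacobi identity and \eqref{BHL}, where the invertibility of $\alpha_B,\beta_B$ enters, for \eqref{lbhm2} and \eqref{lbhm4}). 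Your reading of the statement as ``Proposition \ref{prop1} for the regular algebra $B$, pulled back along the morphism $f$'' is precisely the structure of the paper's computation, merely phrased more modularly.
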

\begin{proof}
The fact that $\lambda$ and $\rho$ are structure maps follows from the multiplicativity of $\alpha_B$ and  $\beta_B$ with respect to $\mu_B$ and $\{\cdot,\cdot\}_B.$
Next, observe that from the BiHom-commutativity of $\mu_B$ and the Bihom-skew-symmetry of
$\{\cdot,\cdot\}_B$ that $\mu_B(b_1,b_2)=\mu_B(\alpha^{-1}\beta(b_2),\alpha\beta^{-1}(b_1))$ and
$\{b_1,b_2\}_B=-\{\alpha_B^{-1}\beta_B(b_2),\alpha_B\beta_B^{-1}(b_1)\}_B$ for all $b_1,b_2\in B.$ Now,
pick $(x,y)\in A^{\times 2}$ and $v\in B$ then sine $f$ is a morphism
of BiHom-algebras, we have by the BiHom-associativity in $B$
\begin{eqnarray}
&&\lambda(\alpha_A(x),\lambda(y,v))=\mu_B(f\alpha_A(x),\mu_B(f(y),v))=
\mu_B(\alpha_B(f(x)),\mu_B(f(y),v))\nonumber\\
&&=\mu_B(f\mu_B(x,y),\beta_B(v))=
\lambda(\mu_B(x,y),\beta_B(v))\nonumber
\end{eqnarray}
Next, compute (\ref{lbhm2}) using the BiHom-Jacobi identity in the third line, as follows
\begin{eqnarray}
&&\rho(\{\beta_A(x),y\}_A,\beta_B(v))=\{f\{\beta(x),y\}_A,\beta(v)\}_B=
\{\{\beta_B(f(x)),f(y)\}_B,\beta_B(v)\}_B\nonumber\\
&&=\{\alpha_B^{-1}\beta_B(\beta_B(v)),\alpha_B\beta_B^{-1}\{\alpha_B^{-1}\beta_B f(y),\alpha_B\beta_B^{-1}(\beta_B f(x))\}_B\}_B\nonumber\\
&&=\{\beta_B^2(\alpha_B^{-1}(v)),\{f(y),\alpha_B^2\beta_B^{-1}f(x)\}_B\}_B\nonumber\\
&&=
\{\beta_B^2(\alpha_B^{-1}f(v)),\{\beta_B(\beta_B^{-1}f(y)),\alpha_B(\alpha_B\beta_B^{-1}f(x))\}_B\}_B
\nonumber\\
&&=-\{\beta_B^2(\beta_B^{-1}f(y)),\{\beta_B(\alpha_B\beta_B^{-1}f(x)),\alpha_B(\alpha_B^{-1}(v))\}_B\}_B\nonumber\\
&&-\{\beta_B^2(\alpha_B\beta_B^{-1}f(x)),\{\beta_B(\alpha_B^{-1}(v)),\alpha_B(\beta_B^{-1}f(y))\}_B\}_B
\nonumber\\
&&=-\{\beta_B(f(y)),\{\alpha_B(f(x)),v)\}_B\}_B
-\{\alpha_B\beta_B(f(x)),\{\beta_B\alpha_B^{-1}(v),\alpha_B\beta_B^{-1}(f(y))\}_B\}_B\nonumber\\
&&=-\{\beta_B(f(y)),\{\alpha_B(f(x)),v)\}_B\}_B+\{\alpha_B\beta_B(f(x)),\{f(y),v\}_B\}_B\nonumber\\
&&=-\{f(\beta_A(y)),\{f(\alpha_A(x)),v)\}_B\}_B+\{f(\alpha_A\beta_A(x)),\{f(y),v\}_B\}_B\nonumber\\
&&=-\rho(\beta(y),\rho(\alpha(x),v))+\rho(\alpha\beta(x),\rho(y,v))\nonumber
\end{eqnarray}
Similarly, using (\ref{BHL}) for $B$ and $f$ is a morphism, we compute
\begin{eqnarray}
&&\rho(\alpha_A\beta_A(x),\lambda(y,v))=\{f(\alpha_A\beta_A(x)),\mu_B(f(y),v)\}_B
=\{\alpha_B\beta_B(f(x)),\mu_B(f(y),v)\}_B\nonumber\\
&&=\mu_B(\{\beta_B(f(x)),f(y)\},\beta_B(v))+\mu_B(\beta_B(f(y)),\{\alpha_B(f(x)),v\}_B)\nonumber\\
&&=\mu_B(f\{\beta_A(x),y\}_A,\beta_B(v))+\mu_B(f(\beta_A(y)),\{f(\alpha_B(x)),v\}_B)\nonumber\\
&&=\lambda(\{\beta_A(x),y\}_A,\beta_B(v))+\lambda(\beta_A(y),\rho(\alpha_A(x),v))
 \mbox{\ which is (\ref{lbhm3})} \nonumber
\end{eqnarray}
Finally, using $f$ is a morphism we obtain (\ref{lbhm4}) as follows
\begin{eqnarray}
&& \rho(\mu_A(\beta_A(x),y),\beta_B(v))=\{f\mu_A(\beta_A(x),y),\beta_B(v)\}_B\nonumber\\
&&=-\{\alpha_B^{-1}\beta_B(\beta_B(v)),\alpha_B\beta_B^{-1}f\mu_A(\beta_A(x),y)\}_B\nonumber\\
&&=-\{\alpha_B\beta_B(\alpha_B^{-2}\beta_B(v)),\mu_B(\alpha_B(f(x)),\alpha_B\beta_B^{-1}(f(y)))\}_B\nonumber\\
&&=-\mu_B(\{\beta_B(\alpha_B^{-2}\beta_B(v)),\alpha_B(f(x))\}_B,\beta_B(\beta_B^{-1}\alpha_B f(y)))
\nonumber\\
&&-\mu_B(\beta_B(\alpha_B f(x)),\{\alpha_B(\alpha_B^{-2}\beta_B(v)),\alpha_B\beta_B^{-1}(f(y))\})
\mbox{\  ( by (\ref{BHL})  in $B$ )} \nonumber\\
&&=-\mu_B(\{\beta_B^2\alpha_B^{-2}(v),\alpha_B(f(x))\}_B,\alpha_B(f(y)))\nonumber\\
&&-\mu_B(\beta_B\alpha_B(f(x)),\{\alpha_B^{-1}\beta_B(v),\alpha_B\beta_B^{-1}(f(y))\}_B)\nonumber\\
&&=-\mu_B(\alpha_B^{-1}\beta_B(\alpha_B f(y)),\alpha_B\beta_B^{-1}\{\beta_B^2\alpha_B^{-2}(v),\alpha_B(f(x))\}_B)+\mu_B(\alpha_B\beta_B(f(x)),\{f(y),v\}_B)\nonumber\\
&&=-\mu_B(\beta_B(f(y)),\{\beta_B\alpha_B^{-1}(v),\alpha_B\beta_B^{-1}(\alpha_B f(x))\})+\mu_B(\alpha_B\beta_B(f(x)),\{f(y),v\}_B)\nonumber\\
&&=\mu_B(\beta_B(f(y)),\{\alpha_B(f(x)),v\}_B)+\mu_B(\alpha_B\beta_B(f(x)),\{f(y),v\}_B)\nonumber\\
&&=\mu_B(f(\beta_A(y)),\{f(\alpha_A(x)),v\}_B)+\mu_B(f(\alpha_A\beta_A(x)),\{f(y),v\}_B)\nonumber\\
&&=\lambda(\beta_A(y),\rho(\alpha_A(x),v))+\lambda(\alpha_A\beta_A(x),\rho(y,v))
\nonumber
\end{eqnarray}
Hence $(B,\alpha_{B},\beta_{B})$ is a
left BiHom-Poisson $A$-module.
\end{proof}
Similarly, we can prove:
\begin{prop}
If $f:(A,\{\cdot,\cdot\}_A, \mu_A, \alpha_{A},\beta_{A})\longrightarrow(B,\{\cdot,\cdot\}_B, \mu_B, \alpha_{B},\beta_{B})$ is a morphism of BiHom-Poisson algebras and $\alpha_B$ and $\beta_B$ are invertible then, $(B,\alpha_{B},\beta_{B})$ becomes a
right BiHom-Poisson $A$-module via $f$, i.e, the structure maps are defined as $\lambda(b,a)=\mu_{B}(b,f(a))$ and $\rho(b,a)=\{b,f(a)\}_B$ for all
$(a,b)\in A\times B$.
\end{prop}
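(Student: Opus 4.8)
The plan is to run the mirror image of the argument used for left modules in the preceding proposition, exchanging the roles of the left structure maps and the right structure maps. Set $V:=B$ with its BiHom-module structure $(\phi,\psi)=(\alpha_B,\beta_B)$, and take as candidate right structure maps $\wedge(b,a)=\mu_B(b,f(a))$ and $\delta(b,a)=\{b,f(a)\}_B$ (these are the maps written $\lambda$ and $\rho$ in the statement). One must verify that $\wedge$ and $\delta$ are morphisms of BiHom-modules and that the four identities \eqref{ebhm1}--\eqref{rbhm4} hold.

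First I would record the two basic ingredients. That $\wedge$ and $\delta$ are morphisms of BiHom-modules is immediate from the multiplicativity of $\alpha_B,\beta_B$ with respect to $\mu_B$ and $\{\cdot,\cdot\}_B$ together with $f\alpha_A=\alpha_B f$ and $f\beta_A=\beta_B f$. Second, since $\alpha_B$ and $\beta_B$ are invertible, BiHom-commutativity of $\mu_B$ and BiHom-skew-symmetry of $\{\cdot,\cdot\}_B$ yield the rewriting rules $\mu_B(b_1,b_2)=\mu_B(\alpha_B^{-1}\beta_B(b_2),\alpha_B\beta_B^{-1}(b_1))$ and $\{b_1,b_2\}_B=-\{\alpha_B^{-1}\beta_B(b_2),\alpha_B\beta_B^{-1}(b_1)\}_B$; these are applied repeatedly to shuttle twisting maps between the two factors.

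Then I would check the four identities in turn, each time inserting $f$ (so that $\mu_A$ and $\{\cdot,\cdot\}_A$ pass to $\mu_B$ and $\{\cdot,\cdot\}_B$) and reducing to a structural identity of $B$. Identity \eqref{ebhm1} is the BiHom-associativity \eqref{BiHom ass identity} of $\mu_B$ evaluated at $(v,f(x),f(y))$. Identity \eqref{rbhm2} is the right-module counterpart of the BiHom-Jacobi identity and follows from \eqref{BiHJacob} for $B$ together with the skew-symmetry rewriting, exactly as \eqref{lbhm2} was handled in the left-module case. For \eqref{rbhm3} and \eqref{rbhm4} one feeds the relevant expression into the BiHom-Leibniz identity \eqref{BHL} of $B$ --- after, where necessary, using the skew-symmetry rule to bring the $A$-argument of $\delta$ into the position of the product $\mu_B(\cdot,\cdot)$ inside $\{\alpha_B\beta_B(\cdot),\mu_B(\cdot,\cdot)\}$ --- and then applies the rewriting rules enough times to rearrange the powers of $\alpha_B$ and $\beta_B$ so that both terms on the right become visibly of the form $\wedge(\delta(\cdot,\cdot),\cdot)$.

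The main obstacle is the verification of \eqref{rbhm3} and \eqref{rbhm4}: since \eqref{BHL} is stated in an asymmetric form, extracting the right-handed Leibniz identities requires conjugating by $\alpha_B^{-1}\beta_B$ and $\alpha_B\beta_B^{-1}$ and carefully tracking how the twisting maps redistribute across the three factors --- the same multi-line bookkeeping that was carried out for \eqref{lbhm4} in the left-module case, with left and right interchanged. Everything else is routine once the rewriting rules are in place.
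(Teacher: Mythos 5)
Your proposal is correct and follows exactly the route the paper intends: the paper gives no separate argument for the right-module case but simply says it is proved "similarly" to the left-module proposition, whose proof is precisely the scheme you describe (morphism property of the structure maps from multiplicativity and $f\alpha_A=\alpha_Bf$, $f\beta_A=\beta_Bf$; the rewriting rules $\mu_B(b_1,b_2)=\mu_B(\alpha_B^{-1}\beta_B(b_2),\alpha_B\beta_B^{-1}(b_1))$ and $\{b_1,b_2\}_B=-\{\alpha_B^{-1}\beta_B(b_2),\alpha_B\beta_B^{-1}(b_1)\}_B$; then reduction of \eqref{ebhm1}--\eqref{rbhm4} to BiHom-associativity, BiHom-Jacobi and \eqref{BHL} in $B$). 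Your identification of the only delicate point --- conjugating the asymmetric Leibniz identity to obtain its right-handed versions --- matches the bookkeeping done for \eqref{lbhm2}--\eqref{lbhm4} in the left-module case.
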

As the case of BiHom-alternative and BiHom-Jordan algebras \cite{LA}, in order to give another example of left BiHom-Poisson modules, let us consider the following:
\begin{defn} An abelian extension of BiHom-Poisson algebras is a short exact sequence of BiHom-Poisson algebras
$$0\longrightarrow(V,\alpha_V,\beta_V)\stackrel{\mbox{i}}{\longrightarrow} (A,\{\cdot,\cdot\}_A, \mu_A, \alpha_A,\beta_A)\stackrel{\mbox{$\pi$}}{\longrightarrow} (B,\{\cdot,\cdot\}_B, \mu_B, \alpha_B,\beta_B)\longrightarrow 0$$
 where $(V,\alpha_V,\beta_B)$ is a trivial BiHom-Poisson algebra, $i$ and $\pi$ are morphisms of BiHom-algebras. Furthermore,  if there exists a morphism
$s :(B,\{\cdot,\cdot\}_B, \mu_B, \alpha_B,\beta_B)\longrightarrow\\ (A,\{\cdot,\cdot\}_A, \mu_A, \alpha_A, \beta_A)$ such that $\pi\circ s = id_B$ then the abelian extension is said to be split and $s$ is called a section of $\pi.$
\end{defn}
\begin{ex}
Given an abelian extension as in the previous definition,  the BiHom-module
$(V,\alpha_V, \beta_V)$ inherits a structure of a left BiHom-Poisson $B$-module and the actions of the BiHom-algebra $(B,\{\cdot,\cdot\}_B,\mu_B, \alpha_B,\beta_B)$  on $V$ are as follows. For any $x\in B,$ there exist $\tilde{x}\in A$ such that $x=\pi(\tilde{x}).$ Let $x$ acts on $v\in V$ by $\lambda(x, v):=\mu_A(\tilde{x},i(v))$ and $\rho(x, v):=\{\tilde{x},i(v)\}_A.$
These are well-defined, as another lift $\tilde{x'}$
of $x$ is written $\tilde{x'}=\tilde{x}+v'$ for some $v'\in V$ and thus
$\lambda(x,v)=\mu_A(\tilde{x},i(v))=\mu_A(\tilde{x'},i(v))$ and $\rho(x,v)=\{\tilde{x},i(v)\}_A=\{\tilde{x'},i(v)\}_A$ because $V$ is trivial. The actions property follow from the BiHom-Poisson identities.
In case these actions of $B$ on $V$  are trivial, one speaks of a central extension.
\end{ex}
The next result allow to construct a sequence of left BiHom-Poisson modules from a given one.
\begin{prop}\label{HJB-HJB}
Let $(A,\{\cdot,\cdot\},\mu,\alpha,\beta)$ be a BiHom-Poisson algebra and  $V_{\phi,\psi}=(V,\phi, \psi)$  be a left BiHom-Poisson $A$-module with the structure maps $\lambda$ and $\rho$.  Then for each $n, m\in\mathbb{N},$ the maps
\begin{eqnarray}
\lambda^{(n,m)}=\lambda\circ(\alpha^n\beta^m\otimes Id_V)\label{nmj1}\\
\rho^{(n,m)}=\rho\circ(\alpha^n\beta^m\otimes Id_V)\label{nmj2}
\end{eqnarray}
give the BiHom-module $(V,\phi, \psi)$ the structure of a left BiHom-Poisson $A$-module that we denote
by $V_{\phi,\psi}^{(n,m)}.$
\end{prop}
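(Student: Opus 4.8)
The plan is to verify directly that $(\lambda^{(n,m)},\rho^{(n,m)})$ equips the \emph{unchanged} BiHom-module $(V,\phi,\psi)$ with the structure of a left BiHom-Poisson $A$-module, that is, that $\lambda^{(n,m)}$ and $\rho^{(n,m)}$ are morphisms of BiHom-modules and that the four identities \eqref{lbhm1}--\eqref{lbhm4} still hold after replacing $\lambda,\rho$ by $\lambda^{(n,m)},\rho^{(n,m)}$. The only inputs needed are the multiplicativity of $A$ (so $\alpha,\beta$ are morphisms; hence $\alpha^n\beta^m$ commutes with $\alpha$ and with $\beta$ and distributes over $\mu$ and $\{\cdot,\cdot\}$) together with the fact that $\lambda,\rho$ are themselves BiHom-module morphisms satisfying \eqref{lbhm1}--\eqref{lbhm4}.

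First I would check that $\lambda^{(n,m)}$ is a morphism of BiHom-modules. Using $\lambda\circ(\alpha\otimes\phi)=\phi\circ\lambda$ and $\alpha^n\beta^m\alpha=\alpha\,\alpha^n\beta^m$,
\[
\lambda^{(n,m)}\circ(\alpha\otimes\phi)=\lambda\circ(\alpha\,\alpha^n\beta^m\otimes\phi)=\phi\circ\lambda\circ(\alpha^n\beta^m\otimes Id_V)=\phi\circ\lambda^{(n,m)},
\]
and the same computation with $(\beta,\psi)$ in place of $(\alpha,\phi)$, and with $\rho$ in place of $\lambda$, gives all four required intertwining relations. Hence $\lambda^{(n,m)}$ and $\rho^{(n,m)}$ are morphisms of BiHom-modules on $(V,\phi,\psi)$.

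Next I would verify the four module identities. The mechanism is uniform: insert the definitions, use multiplicativity to pull each copy of $\alpha^n\beta^m$ outside $\mu$ and $\{\cdot,\cdot\}$, and use $\alpha^n\beta^m\alpha=\alpha\,\alpha^n\beta^m$ and $\alpha^n\beta^m\beta=\beta\,\alpha^n\beta^m$ to line up the arguments, so that the resulting equation is precisely the original identity \eqref{lbhm1}, \eqref{lbhm2}, \eqref{lbhm3} or \eqref{lbhm4} evaluated at $(\alpha^n\beta^m(x),\alpha^n\beta^m(y),v)$ instead of $(x,y,v)$. For instance, for \eqref{lbhm2} one has
\[
\rho^{(n,m)}(\{\beta(x),y\},\psi(v))=\rho\big(\{\beta(\alpha^n\beta^m(x)),\alpha^n\beta^m(y)\},\psi(v)\big),
\]
while
\begin{align*}
&\rho^{(n,m)}(\alpha\beta(x),\rho^{(n,m)}(y,v))-\rho^{(n,m)}(\beta(y),\rho^{(n,m)}(\alpha(x),v))\\
&\qquad=\rho\big(\alpha\beta(\alpha^n\beta^m(x)),\rho(\alpha^n\beta^m(y),v)\big)-\rho\big(\beta(\alpha^n\beta^m(y)),\rho(\alpha(\alpha^n\beta^m(x)),v)\big),
\end{align*}
and these agree by \eqref{lbhm2} applied to $\alpha^n\beta^m(x),\alpha^n\beta^m(y),v$. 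The identities \eqref{lbhm1}, \eqref{lbhm3} and \eqref{lbhm4} follow in exactly the same way, also using $\mu(\alpha^n\beta^m(x),\alpha^n\beta^m(y))=\alpha^n\beta^m\mu(x,y)$ on the relevant right-hand sides.

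There is no conceptual obstacle here; the argument is a bookkeeping exercise with the twisting maps. The point to be careful with is tracking which factor of $\alpha^n\beta^m$ lands on which argument and checking that it always recombines with the $\alpha$'s and $\beta$'s already present to reproduce the original axiom verbatim --- it is precisely the commutativity of $\alpha$ and $\beta$ (hypothesis $\alpha\beta=\beta\alpha$) that makes this go through cleanly.
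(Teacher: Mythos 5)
Your proposal is correct and takes essentially the same route as the paper's proof: first check that $\lambda^{(n,m)}$ and $\rho^{(n,m)}$ intertwine $(\alpha,\phi)$ and $(\beta,\psi)$ by commuting $\alpha^n\beta^m$ past the structure maps, then use multiplicativity of $\alpha,\beta$ to reduce each of \eqref{lbhm1}--\eqref{lbhm4} for the twisted maps to the corresponding axiom for $(\lambda,\rho)$ evaluated at $(\alpha^n\beta^m(x),\alpha^n\beta^m(y),v)$, exactly as the paper does.
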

\begin{proof}
Since the structure map $\lambda$ is a morphism of BiHom-modules, we get:
\begin{eqnarray}
&&\phi\lambda^{(n,m)}=\phi\rho_l\circ(\alpha^n\beta^m\otimes Id_V)
\mbox{ \textsl{( by (\ref{nmj1}) )}}\nonumber\\
&=&\lambda\circ(\alpha^{n+1}\beta^m\otimes\phi)
=\lambda\circ(\alpha^n\beta^m\otimes Id_V)\circ(\alpha\otimes\phi)
=\lambda^{(n,m)}\circ(\alpha\otimes\phi)\nonumber
\end{eqnarray}
 Similarly, we get that $\psi\lambda^{(n,m)}=\lambda^{(n,m)}\circ(\beta\otimes\psi),\
 \phi\rho^{(n,m)}=\rho^{(n,m)}\circ(\phi\otimes\alpha)$ and $\psi\rho^{(n,m)}=\rho^{(n,m)}\circ(\psi\otimes\beta).$  Thus $\rho_l^{(n)}$ and $\rho_r^{(n)}$ are morphisms of BiHom-modules. First, pick $(x,y)\in A^{\times 2}$ and $v\in V,$ then using (\ref{lbhm1}) in the second line for $V_{\phi,\psi},$ we get
 \begin{eqnarray}
&& \lambda^{(n,m)}(\alpha(x),\lambda^{(n,m)}(y,v))=\lambda(\alpha^{n+1}\beta^m(x),
 \lambda(\alpha^n\beta^m(y),v))\nonumber\\
 &&=\lambda(\mu(\alpha^n\beta^m(x),\alpha^n\beta^m(y)),\psi(v))=\lambda^{(n,m)}
 (\mu(x,y),\psi(v))\nonumber
 \end{eqnarray}
 Secondly, we compute
 \begin{eqnarray}
 &&\rho^{(n,m)}(\{\beta(x),y\},\psi(v))=\rho(\{\alpha^n\beta^{m+1}(x),\beta(y)\},
 \psi(v))\nonumber\\
 &&=\rho(\alpha\beta\alpha^n\beta^m(x),\rho(\alpha^n\beta^m(y),v))-\rho(\beta\alpha^n\beta^m(y),\rho(\alpha\alpha^n\beta^m(x),v)) \mbox{ ( by (\ref{lbhm2}) in  $V_{\phi,\psi}$ )} \nonumber\\
 &&=\lambda^{(n,m)}(\alpha\beta(x),\lambda^{(n,m)}(y,v))-\lambda^{(n,m)}(\beta(y),\lambda^{(n,m)}(\alpha(y),v))\nonumber
 \end{eqnarray}
 Next, we obtain
 \begin{eqnarray}
&&\rho^{(n,m)}(\beta\alpha(x),\lambda^{(n,m)}(y,v))=\rho(\alpha\beta\alpha^n\beta^m(x),\lambda(\alpha^n\beta^m(y),v))\nonumber\\
&&=\lambda\{\beta\alpha^n\beta^m(x),\alpha^n\beta^m(y)\},\psi(v))+
\lambda(\beta\alpha^n\beta^(y),\rho(\alpha\alpha^n\beta^m(x),v)) \mbox{ ( by (\ref{lbhm3}) in  $V_{\phi,\psi}$ )} \nonumber\\
&&=\lambda^{(n,m)}(\{\beta(x),y\},\psi(v))+\lambda^{(n,m)}(\beta(y),
\rho^{(n,m)}(\alpha(x),v))\nonumber
 \end{eqnarray}
 Finally, we compute
 \begin{eqnarray}
 &&\rho^{(n,m)}(\mu(\beta(x),y),\psi(v))=\rho(\mu(\beta\alpha^n\beta^m(x),\alpha^n\beta^m(y)),\psi(v))\nonumber\\
 &&=\lambda(\alpha\beta\alpha^n\beta^m(x),\rho(\alpha^n\beta^m(y),v))+
 \lambda(\beta\alpha^n\beta^m(y),\rho(\alpha\alpha^n\beta^m(x),v))
 \mbox{ ( by (\ref{lbhm3}) in  $V_{\phi,\psi}$ )} \nonumber\\
 &&=\lambda^{(n,m)}(\alpha\beta(x),\rho^{(n,m)}(y,v))+\lambda^{(n,m)}(
 \beta(y), \rho^{(n,m)}(\alpha(x),v)).\nonumber
 \end{eqnarray}
 Hence, $V_{\phi,\psi}^{(n,m)}$ is a left BiHom-Poisson $A$-module.
\end{proof}
 Proposition \ref{HJB-HJB} reads for the case of right BiHom-Poisson module as:
\begin{prop}\label{HJB-HJBr}
Let $(A,\{\cdot,\cdot\},\mu,\alpha,\beta)$ be a BiHom-Poisson algebra and  $V_{\phi,\psi}=(V,\phi, \psi)$  be a right BiHom-Poisson $A$-module with the structure maps $\wedge$ and $\delta$.  Then for each $n, m\in\mathbb{N},$ the maps
\begin{eqnarray}
\wedge^{(n,m)}=\wedge\circ(Id_V\otimes\alpha^n\beta^m)\label{nmj1}\\
\delta^{(n,m)}=\delta\circ(Id_V\otimes\alpha^n\beta^m)\label{nmj2}
\end{eqnarray}
give the BiHom-module $(V,\phi, \psi)$ the structure of a right BiHom-Poisson $A$-module that we denote
by $V_{\phi,\psi}^{(n,m)}.$
\end{prop}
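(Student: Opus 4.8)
The plan is to mirror, slot by slot, the proof of Proposition~\ref{HJB-HJB}, replacing the left structure maps $\lambda,\rho$ by the right ones $\wedge,\delta$ and the twist $\alpha^n\beta^m\otimes Id_V$ by $Id_V\otimes\alpha^n\beta^m$.

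First I would check that $\wedge^{(n,m)}$ and $\delta^{(n,m)}$ are morphisms of BiHom-modules. Since $(V,\phi,\psi,\wedge)$ is a right BiHom-associative $A$-module and $(V,\phi,\psi,\delta)$ a right BiHom-Lie $A$-module, we already have $\phi\circ\wedge=\wedge\circ(\phi\otimes\alpha)$, $\psi\circ\wedge=\wedge\circ(\psi\otimes\beta)$ and the analogous equalities for $\delta$. Composing on the right with $Id_V\otimes\alpha^n\beta^m$ and using $\alpha\beta=\beta\alpha$, one obtains $\phi\circ\wedge^{(n,m)}=\wedge^{(n,m)}\circ(\phi\otimes\alpha)$, $\psi\circ\wedge^{(n,m)}=\wedge^{(n,m)}\circ(\psi\otimes\beta)$, and likewise for $\delta^{(n,m)}$, so both twisted maps are morphisms of BiHom-modules.

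Next I would verify the four defining identities (\ref{ebhm1})--(\ref{rbhm4}) for the pair $(\wedge^{(n,m)},\delta^{(n,m)})$. The recipe is uniform: unfold $\wedge^{(n,m)}(v,x)=\wedge(v,\alpha^n\beta^m(x))$ and $\delta^{(n,m)}(v,x)=\delta(v,\alpha^n\beta^m(x))$, absorb the twists $\alpha^n\beta^m$ into the arguments of $\mu$ and $\{\cdot,\cdot\}$ via multiplicativity of $\alpha$ and $\beta$, and recognize the outcome as the corresponding identity for $\wedge,\delta$ evaluated at $\alpha^n\beta^m(x)$ and $\alpha^n\beta^m(y)$. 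For instance, for (\ref{ebhm1}) one computes
\begin{align*}
\wedge^{(n,m)}(\wedge^{(n,m)}(v,x),\beta(y))&=\wedge(\wedge(v,\alpha^n\beta^m(x)),\alpha^n\beta^{m+1}(y))\\
&=\wedge(\phi(v),\mu(\alpha^n\beta^m(x),\alpha^n\beta^m(y)))\\
&=\wedge(\phi(v),\alpha^n\beta^m(\mu(x,y)))=\wedge^{(n,m)}(\phi(v),\mu(x,y)),
\end{align*}
using (\ref{ebhm1}) in $V_{\phi,\psi}$ in the second line and multiplicativity in the last. The verifications of (\ref{rbhm2}), (\ref{rbhm3}) and (\ref{rbhm4}) proceed in exactly the same fashion, invoking (\ref{rbhm2}), (\ref{rbhm3}) and (\ref{rbhm4}) respectively for $V_{\phi,\psi}$, the only subtlety being that the arguments already carrying a $\beta$ or $\alpha$ in those identities now acquire an extra $\alpha^n\beta^m$, which causes no trouble since $\alpha$ and $\beta$ commute.

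I do not anticipate a genuine obstacle; the one point deserving attention is the pair of mixed identities (\ref{rbhm3}) and (\ref{rbhm4}), where both $\{\cdot,\cdot\}$ and $\mu$ occur, so multiplicativity must be applied to both operations and one must confirm that the twist $\alpha^n\beta^m$ lands on the same slot on each side of the equation. Once these checks are complete, $(V,\phi,\psi)$ equipped with $\wedge^{(n,m)}$ and $\delta^{(n,m)}$ satisfies all the axioms of a right BiHom-Poisson $A$-module, which is the claim; this proof is essentially the mirror image of that of Proposition~\ref{HJB-HJB} under the symmetry exchanging the two tensor slots.
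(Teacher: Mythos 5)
Your proof is correct: each twisted identity follows by applying the corresponding axiom (\ref{ebhm1})--(\ref{rbhm4}) to $\alpha^n\beta^m(x)$, $\alpha^n\beta^m(y)$ and using multiplicativity and the commutation of $\alpha$ and $\beta$, and the BiHom-module morphism checks go through as you state. This is exactly the route the paper intends, since it omits the proof of this right-module version and simply presents it as the mirror of Proposition~\ref{HJB-HJB}, whose proof you have faithfully transposed.
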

\begin{thm}\label{HJB-JB}
Let $(A,\{\cdot,\cdot\}, \mu)$ be a Poisson algebra, $V$ be a left Poisson $A$-module with the structure maps
$\lambda$, $\rho$ and $\alpha,\beta$ be  endomorphisms of the Jordan algebra $A$
and $\phi, \psi$ be  linear self-maps of $V$ such that $\phi\circ\lambda=\lambda\circ(\alpha\otimes\phi),$
$\phi\circ\rho=\rho\circ(\alpha\otimes\phi),$
$\psi\circ\lambda=\lambda\circ(\beta\otimes\psi)$ and
$\psi\circ\rho=\rho\circ(\beta\otimes\psi)$.
Write $A_{\alpha,\beta}$ for the BiHom-Poisson algebra $(A,\{\cdot,\cdot\}_{\alpha,\beta}=\{\cdot,\cdot\}(\alpha\otimes\beta),\mu_{\alpha,\beta}=\mu(\alpha\otimes\beta), \alpha, \beta)$ and
$V_{\phi,\psi}$ for the BiHom-module $(V,\phi,\psi).$ Then the maps:
\begin{eqnarray}
\tilde{\lambda}=\lambda\circ(\alpha\beta\otimes\psi) \mbox{ and }
\tilde{\rho}=\rho\circ(\alpha\beta\otimes\psi)
\end{eqnarray}
give the BiHom-module $V_{\phi,\psi}$ the structure of a left BiHom-Poisson $A_{\alpha,\beta}$-module.
\end{thm}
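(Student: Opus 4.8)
The plan is to check, item by item, that the quadruple $(V,\phi,\psi)$ equipped with the structure maps $\tilde\lambda$ and $\tilde\rho$ satisfies the requirements of a left BiHom-Poisson module over $A_{\alpha,\beta}$; recall that $A_{\alpha,\beta}$ is itself a BiHom-Poisson algebra by Theorem \ref{thm:twist} applied with $\alpha=\beta=\mathrm{id}$. The whole verification rests on three ingredients: the intertwining hypotheses $\phi\lambda=\lambda(\alpha\otimes\phi)$, $\psi\lambda=\lambda(\beta\otimes\psi)$, $\phi\rho=\rho(\alpha\otimes\phi)$, $\psi\rho=\rho(\beta\otimes\psi)$; the fact that $\alpha$ and $\beta$ are commuting endomorphisms of the Poisson algebra $A$, so that $\alpha\beta\circ\mu=\mu\circ(\alpha\beta\otimes\alpha\beta)$ and $\alpha\beta\circ\{\cdot,\cdot\}=\{\cdot,\cdot\}\circ(\alpha\beta\otimes\alpha\beta)$; and the four defining identities of $V$ as an ordinary left Poisson $A$-module.

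First I would verify that $\tilde\lambda$ and $\tilde\rho$ are morphisms of BiHom-modules from $(A_{\alpha,\beta}\otimes V,\alpha\otimes\phi,\beta\otimes\psi)$ to $(V,\phi,\psi)$. For instance,
\[
\phi\circ\tilde\lambda=\phi\circ\lambda\circ(\alpha\beta\otimes\psi)=\lambda\circ(\alpha\otimes\phi)\circ(\alpha\beta\otimes\psi)=\lambda\circ(\alpha^{2}\beta\otimes\phi\psi)=\lambda\circ(\alpha^{2}\beta\otimes\psi\phi)=\tilde\lambda\circ(\alpha\otimes\phi),
\]
where we used $\phi\psi=\psi\phi$, which holds since $(V,\phi,\psi)$ is a BiHom-module; the remaining relations $\psi\tilde\lambda=\tilde\lambda(\beta\otimes\psi)$, $\phi\tilde\rho=\tilde\rho(\alpha\otimes\phi)$ and $\psi\tilde\rho=\tilde\rho(\beta\otimes\psi)$ are obtained in exactly the same way.

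Next I would establish the four module identities \eqref{lbhm1}--\eqref{lbhm4}; the pattern is always the same. One expands $\tilde\lambda$ and $\tilde\rho$, moves each $\psi$ to the innermost slot via the intertwining relations, moves each copy of $\alpha\beta$ inside $\mu$ and $\{\cdot,\cdot\}$ using that $\alpha,\beta$ are Poisson morphisms, and then recognizes the corresponding untwisted Poisson-module identity for $V$. For \eqref{lbhm1} this reads
\[
\tilde\lambda(\alpha(x),\tilde\lambda(y,v))=\lambda\bigl(\alpha^{2}\beta(x),\psi\lambda(\alpha\beta(y),\psi(v))\bigr)=\lambda\bigl(\alpha^{2}\beta(x),\lambda(\alpha\beta^{2}(y),\psi^{2}(v))\bigr),
\]
\[
\tilde\lambda(\mu_{\alpha,\beta}(x,y),\psi(v))=\lambda\bigl(\alpha\beta\,\mu(\alpha(x),\beta(y)),\psi^{2}(v)\bigr)=\lambda\bigl(\mu(\alpha^{2}\beta(x),\alpha\beta^{2}(y)),\psi^{2}(v)\bigr),
\]
and these agree by identity \eqref{wid} for $V$ applied to $a=\alpha^{2}\beta(x)$, $b=\alpha\beta^{2}(y)$, $m=\psi^{2}(v)$. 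Similarly, \eqref{lbhm2} reduces to the Lie-module identity $\rho(\{a,b\},m)=\rho(a,\rho(b,m))-\rho(b,\rho(a,m))$, while \eqref{lbhm3} and \eqref{lbhm4} reduce to the two mixed Poisson-module identities $\rho(a,\lambda(b,m))=\lambda(\{a,b\},m)+\lambda(b,\rho(a,m))$ and $\rho(\mu(a,b),m)=\lambda(a,\rho(b,m))+\lambda(b,\rho(a,m))$, with $a,b$ the appropriate twisted arguments and $m=\psi^{2}(v)$.

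The only point requiring care is the bookkeeping of the twists: after pushing all the $\psi$'s and all the $\alpha\beta$'s inward one must confirm that the total powers of $\alpha$ and $\beta$ in each tensor slot and the power of $\psi$ on $v$ coincide on the two sides of each identity. A minor subtlety is that $\mu_{\alpha,\beta}=\mu\circ(\alpha\otimes\beta)$ and $\{\cdot,\cdot\}_{\alpha,\beta}=\{\cdot,\cdot\}\circ(\alpha\otimes\beta)$ are not symmetric, so in \eqref{lbhm2}--\eqref{lbhm4} one must track which of the two arguments receives the extra $\alpha$ and which the extra $\beta$; once this is accounted for the untwisted identities apply verbatim. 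No genuine obstacle arises --- this is essentially a twisted replay of Proposition \ref{HJB-HJB}, now with the ``Yau-type'' twist $\alpha\beta\otimes\psi$ in place of $\alpha^{n}\beta^{m}\otimes\mathrm{Id}_{V}$.
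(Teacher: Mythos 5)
Your proposal is correct and follows essentially the same route as the paper: expand $\tilde\lambda,\tilde\rho$, push $\psi$ inward via the intertwining hypotheses and $\alpha\beta$ inward via multiplicativity, and reduce each of \eqref{lbhm1}--\eqref{lbhm4} to the untwisted Poisson-module axioms of $V$ (your sample computation for \eqref{lbhm1} matches the paper's verbatim). The only difference is cosmetic: you correctly attribute the reductions to the ordinary Poisson-module identities such as \eqref{wid}, whereas the paper's text cites the BiHom identities ``in $V_{\phi,\psi}$'' out of habit from Proposition \ref{HJB-HJB}.
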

\begin{proof}
It is clear that $\tilde{\lambda}$ and $\tilde{\rho}$ are morphisms of
BiHom-modules. Netx, pick $(x,y)\in A^{\times 2}$ and $v\in V,$ then
using (\ref{lbhm1}) for  $V_{\phi,\psi}$ in the second line  we get:
 \begin{eqnarray}
&& \tilde{\lambda}(\alpha(x),\tilde{\lambda}(y,v))=\lambda(\alpha^2\beta(x),
 \lambda(\alpha\beta^2(y),\psi^2(v)))\nonumber\\
 &&=\lambda(\mu(\alpha^2\beta(x),\alpha\beta^2(y)),\psi^2(v))=\lambda(\alpha\beta\mu(\alpha(x),\beta(y)),\psi^2(v))=\tilde{\lambda}
 (\mu_{\alpha,\beta}(x,y),\psi(v))\nonumber
 \end{eqnarray}
 Secondly, we compute
 \begin{eqnarray}
 &&\tilde{\rho}(\{\beta(x),y\}_{\alpha,\beta},\psi(v))=\rho(\{\alpha^2\beta^2(x),\alpha\beta^2(y)\},
 \psi^2(v))\nonumber\\
 &&=\rho(\alpha^2\beta^2(x),\rho(\alpha\beta^2(y),\psi^2(v)))-\rho(\alpha\beta^2(y),\rho(\alpha^2\beta^2(x),\psi^2(v)) \mbox{ ( by (\ref{lbhm2}) in  $V_{\phi,\psi}$ )} \nonumber\\
 &&=\tilde{\lambda}(\alpha\beta(x),\tilde{\lambda}(y,v))-\tilde{\lambda}(\beta(y),\tilde{\lambda}(\alpha(y),v))\nonumber
 \end{eqnarray}
 Next, we obtain
 \begin{eqnarray}
&& \tilde{\rho}(\beta\alpha(x), \tilde{\lambda}(y,v))=\rho(\alpha^2\beta(x),\lambda(\alpha\beta^2(y),\psi^2(v)))\nonumber\\
&&=\lambda\{\alpha^2\beta(x),\alpha\beta^2(y)\},\psi^2(v))+
\lambda(\alpha\beta^2(y),\rho(\alpha^2\beta(x),\psi^2(v))) \mbox{ ( by (\ref{lbhm3}) in  $V_{\phi,\psi}$ )} \nonumber\\
&&= \tilde{\lambda}(\{\beta(x),y\}_{\alpha,\beta},\psi(v))+ \tilde{\lambda}(\beta(y),
 \tilde{\rho}(\alpha(x),v))\nonumber
 \end{eqnarray}
 Finally, we compute
 \begin{eqnarray}
 && \tilde{\rho}(\mu_{\alpha,\beta}(\beta(x),y),\psi(v))=\rho(\mu(\alpha^2\beta^2(x),\alpha\beta^2(y)),\psi^2(v))\nonumber\\
 &&=\lambda(\alpha^2\beta^2(x),\rho(\alpha\beta^2(y),\psi^2(v)))+
 \lambda(\alpha\beta^2(y),\rho(\alpha^2\beta^2(x),\psi^2(v)))
 \mbox{ ( by (\ref{lbhm3}) in  $V_{\phi,\psi}$ )} \nonumber\\
 &&= \tilde{\lambda}(\alpha\beta(x), \tilde{\rho}(y,v))+ \tilde{\lambda}
 (\beta(y),  \tilde{\rho}(\alpha(x),v)))\nonumber
 \end{eqnarray}
 Hence, $V_{\phi,\psi}^{(n,m)}$ is a left BiHom-Poisson $A$-module.
\end{proof}
\begin{cor}
Let $(A,\{\cdot,\cdot\}, \mu)$ be a Poisson algebra, $V$ be a left Poisson $A$-module with the structure maps
$\lambda$ and $\rho$, $\alpha,\ \beta$ be  endomorphisms of the Poisson algebra $A$ and $\phi, \psi$ be  linear self-maps of $V$ such that $\phi\circ\lambda=\lambda\circ(\alpha\otimes\phi),$
$\phi\circ\rho=\rho\circ(\alpha\otimes\phi),$
$\psi\circ\lambda=\lambda\circ(\beta\otimes\psi)$ and
$\psi\circ\rho=\rho\circ(\alpha\otimes\psi)$.\\
Write $A_{\alpha,\beta}$ for the BiHom-Poisson algebra $(A,\{\cdot,\cdot\}_{\alpha,\beta}=\{\alpha\otimes\beta\}, \mu_{\alpha,\beta}=\mu(\alpha\otimes\beta), \alpha, \beta)$ and
$V_{\phi,\psi}$ for the BiHom-module $(V,\phi,\psi).$ Then the maps:
\begin{eqnarray}
\tilde{\lambda}^{(n,m)}=\lambda\circ(\alpha^{n+1}\beta^{m+1}\otimes\psi) \mbox{ and }
\tilde{\rho}^{(n,m)}=\rho\circ(\alpha^{n+1}\beta^{m+1}\otimes\psi)
\end{eqnarray}
give the BiHom-module $V_{\phi,\psi}$ the structure of a left BiHom-Poisson $A_{\alpha,\beta}$-module for all
$n, m\in\mathbb{N}$.
\end{cor}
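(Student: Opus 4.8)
The plan is to obtain this corollary for free by chaining the two preceding results, Theorem~\ref{HJB-JB} and Proposition~\ref{HJB-HJB}, rather than by verifying the four defining identities of a left BiHom-Poisson module from scratch. The key observation is that the maps appearing in the statement factor as $\tilde{\lambda}^{(n,m)}=\tilde{\lambda}\circ(\alpha^{n}\beta^{m}\otimes Id_V)$ and $\tilde{\rho}^{(n,m)}=\tilde{\rho}\circ(\alpha^{n}\beta^{m}\otimes Id_V)$, where $\tilde{\lambda}=\lambda\circ(\alpha\beta\otimes\psi)$ and $\tilde{\rho}=\rho\circ(\alpha\beta\otimes\psi)$ are exactly the structure maps produced by Theorem~\ref{HJB-JB}; this uses that $\alpha$ and $\beta$ commute, so that $(\alpha\beta)\circ(\alpha^{n}\beta^{m})=\alpha^{n+1}\beta^{m+1}$.

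First I would invoke Theorem~\ref{HJB-JB}. Its hypotheses coincide with those in the present statement — $\alpha,\beta$ are endomorphisms of the Poisson algebra $A$, and $\phi,\psi$ satisfy the four intertwining relations with $\lambda$ and $\rho$ — so it gives $V_{\phi,\psi}=(V,\phi,\psi)$ the structure of a left BiHom-Poisson module over $A_{\alpha,\beta}=(A,\{\cdot,\cdot\}_{\alpha,\beta},\mu_{\alpha,\beta},\alpha,\beta)$ with structure maps $\tilde{\lambda}$ and $\tilde{\rho}$. The essential point to record at this stage is that the two twisting maps of the BiHom-Poisson algebra $A_{\alpha,\beta}$ are $\alpha$ and $\beta$ themselves.

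Next I would apply Proposition~\ref{HJB-HJB}, now taking the BiHom-Poisson algebra to be $A_{\alpha,\beta}$ (whose twisting maps are $\alpha,\beta$) and the left BiHom-Poisson $A_{\alpha,\beta}$-module to be $V_{\phi,\psi}$ with structure maps $\tilde{\lambda},\tilde{\rho}$. For every $n,m\in\mathbb{N}$ this produces a left BiHom-Poisson $A_{\alpha,\beta}$-module $V_{\phi,\psi}^{(n,m)}$ with structure maps $\tilde{\lambda}\circ(\alpha^{n}\beta^{m}\otimes Id_V)$ and $\tilde{\rho}\circ(\alpha^{n}\beta^{m}\otimes Id_V)$. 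It then only remains to unravel these compositions:
\[
\tilde{\lambda}\circ(\alpha^{n}\beta^{m}\otimes Id_V)=\lambda\circ(\alpha\beta\otimes\psi)\circ(\alpha^{n}\beta^{m}\otimes Id_V)=\lambda\circ(\alpha^{n+1}\beta^{m+1}\otimes\psi)=\tilde{\lambda}^{(n,m)},
\]
and similarly $\tilde{\rho}\circ(\alpha^{n}\beta^{m}\otimes Id_V)=\rho\circ(\alpha^{n+1}\beta^{m+1}\otimes\psi)=\tilde{\rho}^{(n,m)}$, which is exactly the assertion.

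I do not anticipate any substantive obstacle; the main difficulty, such as it is, will be purely bookkeeping. Two points will require care: matching the compatibility hypotheses of the corollary with those of Theorem~\ref{HJB-JB} (the relation $\psi\circ\rho=\rho\circ(\alpha\otimes\psi)$ as stated here should read $\psi\circ\rho=\rho\circ(\beta\otimes\psi)$ in order to agree with Theorem~\ref{HJB-JB}, where $\psi$ must intertwine $\rho$ with the second twisting map $\beta$), and ensuring that Proposition~\ref{HJB-HJB} is applied over the twisted algebra $A_{\alpha,\beta}$ rather than over $A$. Should a self-contained argument be preferred, one could instead substitute $\tilde{\lambda}^{(n,m)}$ and $\tilde{\rho}^{(n,m)}$ directly into the identities \eqref{lbhm1}--\eqref{lbhm4} for a left BiHom-Poisson $A_{\alpha,\beta}$-module and collapse each one to the corresponding left Poisson $A$-module identity for $V$, exactly as in the proofs of Theorem~\ref{HJB-JB} and Proposition~\ref{HJB-HJB}; the chaining argument above merely avoids repeating those routine calculations.
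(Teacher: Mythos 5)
Your proposal is correct and is essentially the paper's own argument: the paper also proves this corollary by combining Proposition \ref{HJB-HJB} and Theorem \ref{HJB-JB}, and you have simply spelled out the factorization $\tilde{\lambda}^{(n,m)}=\tilde{\lambda}\circ(\alpha^{n}\beta^{m}\otimes Id_V)$, $\tilde{\rho}^{(n,m)}=\tilde{\rho}\circ(\alpha^{n}\beta^{m}\otimes Id_V)$ that makes the chaining explicit. Your remark that the hypothesis $\psi\circ\rho=\rho\circ(\alpha\otimes\psi)$ should read $\psi\circ\rho=\rho\circ(\beta\otimes\psi)$ to match Theorem \ref{HJB-JB} is also well taken.
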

\begin{proof} The proof follows from Proposition \ref{HJB-HJB} and Theorem \ref{HJB-JB}.
\end{proof}
Similarly, we can prove the following result which is the analogous of
Theorem \ref{HJB-JB} for right BiHom-Poisson modules.
\begin{thm}\label{HJB-JB}
Let $(A,\{\cdot,\cdot\}, \mu)$ be a Poisson algebra, $V$ be a right Poisson $A$-module with the structure maps
$\lambda$, $\rho$ and $\alpha,\beta$ be  endomorphisms of the Jordan algebra $A$
and $\phi, \psi$ be  linear self-maps of $V$ such that $\phi\circ\lambda=\lambda\circ(\alpha\otimes\phi),$
$\phi\circ\rho=\rho\circ(\alpha\otimes\phi),$
$\psi\circ\lambda=\lambda\circ(\beta\otimes\psi)$ and
$\psi\circ\rho=\rho\circ(\beta\otimes\psi)$.
Write $A_{\alpha,\beta}$ for the BiHom-Poisson algebra $(A,\{\cdot,\cdot\}_{\alpha,\beta}=\{\cdot,\cdot\}(\alpha\otimes\beta), \mu_{\alpha,\beta}=\mu(\alpha\otimes\beta), \alpha, \beta)$ and
$V_{\phi,\psi}$ for the BiHom-module $(V,\phi,\psi).$ Then the maps:
\begin{eqnarray}
\tilde{\wedge}=\wedge\circ(\phi\otimes \alpha\beta) \mbox{ and }
\tilde{\delta}=\rho\circ(\phi\otimes\alpha\beta)
\end{eqnarray}
give the BiHom-module $V_{\phi,\psi}$ the structure of a right BiHom-Poisson $A_{\alpha,\beta}$-module.
\end{thm}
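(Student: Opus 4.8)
The plan is to transpose, line by line, the argument used for the left-handed version established just above, replacing every left action by the corresponding right action. Throughout I would write $\tilde\wedge(v,x)=\wedge(\phi(v),\alpha\beta(x))$ and $\tilde\delta(v,x)=\delta(\phi(v),\alpha\beta(x))$ (so that $\tilde\delta$ reads $\delta\circ(\phi\otimes\alpha\beta)$, and the compatibility hypotheses are the ones for the right structure maps, namely $\phi\wedge=\wedge(\phi\otimes\alpha)$, $\psi\wedge=\wedge(\psi\otimes\beta)$ and likewise for $\delta$). Since $\alpha$ and $\beta$ are commuting endomorphisms of the Poisson algebra $(A,\{\cdot,\cdot\},\mu)$, they can be pulled out of any product or bracket; the recurring device is the family of simplifications of the type $\mu(\alpha^{2}\beta(x),\alpha\beta^{2}(y))=\alpha\beta\,\mu_{\alpha,\beta}(x,y)$ (and the analogue for $\{\cdot,\cdot\}_{\alpha,\beta}$), which is what lets the twisted module axioms be matched against the untwisted ones for $V$ over $A$.

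First I would observe that $\tilde\wedge$ and $\tilde\delta$ are morphisms of BiHom-modules; this is immediate from the intertwining conditions between $\wedge,\delta$ and $\phi,\psi$ together with $\alpha\beta=\beta\alpha$, exactly as in Proposition \ref{HJB-HJBr}. Then I would check, one at a time, the four defining identities \eqref{ebhm1}, \eqref{rbhm2}, \eqref{rbhm3}, \eqref{rbhm4} of a right BiHom-Poisson module over $A_{\alpha,\beta}$. For \eqref{ebhm1}: expanding both sides and using $\phi\wedge=\wedge(\phi\otimes\alpha)$ to push $\phi$ inward produces $\wedge(\wedge(\phi^{2}(v),\alpha^{2}\beta(x)),\alpha\beta^{2}(y))$ on the left, and the ordinary right-module associativity $\wedge(\wedge(w,a),b)=\wedge(w,\mu(a,b))$ turns this into $\wedge(\phi^{2}(v),\mu(\alpha^{2}\beta(x),\alpha\beta^{2}(y)))=\wedge(\phi^{2}(v),\alpha\beta\,\mu_{\alpha,\beta}(x,y))$, which is precisely $\tilde\wedge(\phi(v),\mu_{\alpha,\beta}(x,y))$. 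For \eqref{rbhm2} one repeats this with the right-module Lie/Jacobi axiom in place of associativity, and for the two mixed identities \eqref{rbhm3} and \eqref{rbhm4} with the two right-module Leibniz axioms relating $\mu_M$ and $\{\cdot,\cdot\}_M$; in each case one substitutes the definitions, moves $\phi,\psi$ through the structure maps, and reduces to the untwisted axiom after matching the total exponents of $\alpha$ and of $\beta$ on each argument. No ingredient beyond those already used in Proposition \ref{HJB-HJBr} and in the left-handed theorem is needed.

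The main obstacle is organizational rather than conceptual: three layers of twisting are present at once — the $\phi,\psi$ inserted by the definitions of $\tilde\wedge,\tilde\delta$, the $\alpha\beta$ built into $\mu_{\alpha,\beta}$ and $\{\cdot,\cdot\}_{\alpha,\beta}$, and the copies of $\alpha,\beta$ that occur inside the right BiHom-Poisson module axioms — so each verification closes only once the arguments have been rearranged so that the $\alpha$- and $\beta$-exponents on the two sides visibly agree. I would therefore carry out each of the four checks as a short explicit chain of rewrites, in the style of the displayed computations for the left-handed case, and then collect them to conclude that $(V_{\phi,\psi},\tilde\wedge,\tilde\delta)$ is a right BiHom-Poisson $A_{\alpha,\beta}$-module.
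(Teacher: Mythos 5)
Your proposal is correct and coincides with the paper's own (omitted) argument: the paper simply states the result as ``similarly'' provable from the left-handed Theorem \ref{HJB-JB}, and your plan --- interpret the hypotheses as the right-handed intertwining conditions, check that $\tilde{\wedge},\tilde{\delta}$ are BiHom-module morphisms, and verify \eqref{ebhm1}, \eqref{rbhm2}--\eqref{rbhm4} by pushing $\phi,\psi$ through the structure maps and reducing to the untwisted right Poisson module axioms after matching $\alpha,\beta$ exponents --- is exactly that computation. Your sample check of \eqref{ebhm1}, landing on $\wedge(\phi^{2}(v),\alpha\beta\,\mu_{\alpha,\beta}(x,y))=\tilde{\wedge}(\phi(v),\mu_{\alpha,\beta}(x,y))$, is the right-handed mirror of the displayed computation in the left-handed proof, so nothing further is needed.
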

In the case of Poisson algebras, we can form semidirect products when given a left (or a right) module. Similarly, we have
\begin{thm}
Let $(A, \{\cdot,\cdot\}, \mu, \alpha, \beta)$ be a BiHom-Poisson algebra and  $(V,\phi, \psi)$  be a left $A$-module with the structure maps $\lambda$ and $\rho$. Then $(A\oplus V,[\dot,\dot],\ast, \tilde{\alpha}, \tilde{\beta})$ is
a BiHom-Poisson algebra where
$\ast,[\cdot,\cdot]: (A\oplus V)^{\otimes 2}\longrightarrow A\oplus V,~$
$(a+u)\ast(b+v):=\mu(a,b)+\lambda(a,v)+\lambda(\alpha^{-1}\beta(b),\psi^{-1}\phi(u)),$  $[a+u,b+v]:=\{a,b\}+\rho(a,v)-\rho(\alpha^{-1}\beta(b),\psi^{-1}\phi(u))$  and
$\tilde{\alpha}, \tilde{\beta}: A\oplus V\longrightarrow A\oplus V,$ $\tilde{\alpha}(a+u):=\alpha(a)+\phi(u)$  and $\tilde{\beta}(a+u):=\beta(a)+\psi(u)$ called the
semidirect product of the BiHom-Poisson $(A,\mu, \{\cdot,\cdot\},\alpha, \beta)$ and  $(V,\phi, \psi).$
\end{thm}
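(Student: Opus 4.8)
The plan is to verify the three items of Definition \ref{si} for $(A\oplus V,[\cdot,\cdot],\ast,\tilde{\alpha},\tilde{\beta})$. Since the formulas defining $\ast$ and $[\cdot,\cdot]$ involve $\alpha^{-1}$ and $\psi^{-1}$, I work in the regular setting (so $\alpha,\beta,\phi,\psi$ are all invertible), using repeatedly that any two of $\alpha,\beta$ commute, any two of $\phi,\psi$ commute, and that $\lambda,\rho$ are morphisms of BiHom-modules, i.e. $\phi\lambda=\lambda(\alpha\otimes\phi)$, $\psi\lambda=\lambda(\beta\otimes\psi)$, $\phi\rho=\rho(\alpha\otimes\phi)$, $\psi\rho=\rho(\beta\otimes\psi)$. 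The first step is the multiplicativity of $\tilde{\alpha},\tilde{\beta}$: that $\tilde{\alpha}\tilde{\beta}=\tilde{\beta}\tilde{\alpha}$ is immediate, while the $A$-component of, say, $\tilde{\alpha}\,\ast=\ast\,\tilde{\alpha}^{\otimes 2}$ is multiplicativity of $\alpha$ for $\mu$ and its $V$-component is exactly the morphism identities above applied to the two $\lambda$-summands of $\ast$ (the twist $\alpha^{-1}\beta\otimes\psi^{-1}\phi$ commutes with $\tilde{\alpha},\tilde{\beta}$ factor-wise). The same argument works for $\tilde{\beta}$ and for $[\cdot,\cdot]$.

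From here every identity to be verified decomposes into an $A$-valued component and a $V$-valued component; concretely, the copy of $A$ is a BiHom-subalgebra of $A\oplus V$, $V$ is a BiHom-ideal with $V\ast V=0=[V,V]$, and the $A$-components of items 1--3 of Definition \ref{si} are precisely BiHom-associativity, BiHom-commutativity, the BiHom-Jacobi identity \eqref{BiHJacob}, BiHom-skew-symmetry, and the BiHom-Leibniz identity \eqref{BHL} for $A$ itself, all of which hold by hypothesis. So all the content lies in the $V$-components of the mixed terms. For the BiHom-commutativity of $\ast$ one expands $\tilde{\beta}(X)\ast\tilde{\alpha}(Y)-\tilde{\beta}(Y)\ast\tilde{\alpha}(X)$ with $X=a+u$, $Y=b+v$: the $\mu$-part is BiHom-commutativity of $\mu$, and the $\lambda$-contributions get swapped into one another once one rewrites, on $A$, $\mu(p,q)=\mu(\alpha^{-1}\beta(q),\alpha\beta^{-1}(p))$ — the identity the definition of $\ast$ is modelled on — together with the morphism identities on $\lambda$. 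The BiHom-skew-symmetry of $[\cdot,\cdot]$ is the exact analogue, using $\{p,q\}=-\{\alpha^{-1}\beta(q),\alpha\beta^{-1}(p)\}$ in place of BiHom-commutativity of $\mu$.

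For the remaining three structural conditions I would expand the full product or bracket on $A\oplus V$ and collect the $V$-terms according to which of the three slots carries the $V$-entry. For BiHom-associativity of $\ast$: the block of terms in which the $V$-entry sits in the first slot assembles, after the morphism identities, into $\lambda(\alpha(\cdot),\lambda(\cdot,\cdot))-\lambda(\mu(\cdot,\cdot),\psi(\cdot))$ and vanishes by \eqref{lbhm1}; the blocks where the $V$-entry sits in the second or third slot become, after pushing the twist $\alpha^{-1}\beta\otimes\psi^{-1}\phi$ through $\mu$ and $\lambda$, an instance of \eqref{lbhm1} at twisted arguments, and again vanish. For the BiHom-Jacobi identity of $[\cdot,\cdot]$: in the $V$-component the three cyclic summands containing a fixed $V$-entry assemble into the left BiHom-Lie module axiom \eqref{lbhm2} (once for each of the three positions), and the residual $\rho$-terms cancel cyclically. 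For the BiHom-Leibniz identity $[\tilde{\alpha}\tilde{\beta}(X),Y\ast Z]=[\tilde{\beta}(X),Y]\ast\tilde{\beta}(Z)+\tilde{\beta}(Y)\ast[\tilde{\alpha}(X),Z]$: the $V$-terms in which $Z$ (resp. $Y$) carries the $V$-entry and $X$ acts collapse by \eqref{lbhm3}, the $V$-terms coming from a $\mu(\beta(X),Y)$-type product acting on a $V$-entry collapse by \eqref{lbhm4}, and the leftover terms pair off by the BiHom-commutativity rewriting used above.

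I expect the main obstacle to be purely bookkeeping: in the $V$-components of the BiHom-associativity of $\ast$ and of the BiHom-Leibniz identity one must move the twisting operators $\alpha^{-1}\beta$ and $\psi^{-1}\phi$ past $\mu,\lambda,\rho,\phi,\psi$ many times — this is precisely where invertibility and the pairwise commutativity of $\alpha,\beta,\phi,\psi$ enter — until each block of terms is exactly in the shape of one of \eqref{lbhm1}--\eqref{lbhm4}. There is no conceptual difficulty: the definitions of $\ast$ and $[\cdot,\cdot]$ are engineered so that $u\mapsto\lambda(\alpha^{-1}\beta(b),\psi^{-1}\phi(u))$ behaves like the commutativity/skew-symmetry twist of the left action, mimicking the way $A$ itself is a left BiHom-Poisson module over itself in Proposition \ref{prop1}; the work is entirely in keeping the twists and signs straight, and organizing the proof by which module axiom annihilates which block of terms is what makes it tractable.
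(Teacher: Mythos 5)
Your proposal is correct and follows essentially the same route as the paper: check multiplicativity of $\tilde{\alpha},\tilde{\beta}$, verify BiHom-commutativity and BiHom-associativity of $\ast$ by direct expansion using the morphism identities for $\lambda$ together with \eqref{lbhm1} and the rewriting $\mu(p,q)=\mu(\alpha^{-1}\beta(q),\alpha\beta^{-1}(p))$, and verify the BiHom-Leibniz identity using \eqref{BHL}, \eqref{lbhm3} and \eqref{lbhm4}. The only divergence is minor: where you verify BiHom-skew-symmetry and the BiHom-Jacobi identity of $[\cdot,\cdot]$ by hand from \eqref{lbhm2}, the paper simply invokes the known semidirect-product construction for BiHom-Lie algebras (Proposition 4.9 of \cite{GRAZIANI}), which shortens but does not change the argument.
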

\begin{proof} Clearly, $\tilde{\alpha}$ and $\tilde{\beta}$ are multiplicative
with respect to $\ast$ and $[\cdot,\cdot].$ Next
\begin{eqnarray}
&&\tilde{\beta}(a+u)\ast\tilde{\alpha}(b+v)=(\beta(a)+\psi(u))\ast(\alpha(b)+\phi(v))\nonumber\\
&&=\mu(\beta(a),\alpha(b))+\lambda(\beta(a),\phi(v))+\lambda(\beta(b),\phi(u)\nonumber\\
&&=\mu(\beta(b),\alpha(a))+\lambda(\beta(b),\phi(u)+\lambda(\beta(a),\phi(v)) \mbox{ ( by the BiHom-commutativity of $\mu$ )}\nonumber\\
&&=\tilde{\beta}(b+v)\ast\tilde{\alpha}(u+v).
\end{eqnarray}
Next, pick  $(a,b,c)\in A^{\times 2}$ and $(u,v,w)\in V^{\times 2},$ then
\begin{eqnarray}
&&\Big((a+u)\ast(b+v)\Big)\ast\bar{\beta}(c+w)=
\Big(\mu(a,b)+\lambda(a,v)+\lambda(\alpha^{-1}\beta(b),\psi^{-1}\phi(u))\Big)
\ast(\beta(c)+\psi(w))\nonumber\\
&&=\mu(\mu(a,b),\beta(c))+\lambda(\mu(a,b),\psi(w))+\lambda(\alpha^{-1}\beta^2(c),
\lambda(\beta^{-1}\alpha(a),\phi\psi^{-1}(v))\nonumber\\
&&+\lambda(\alpha^{-1}\beta^2(c),\lambda(b,(\psi^{-1}\phi)^2(u)))
\mbox{  ( using $\lambda$ is a morphism )}\nonumber\\
&&=\mu(\alpha(a),\mu(b,c))+\lambda(\mu(a,b),\psi(w))+\lambda(\mu((\alpha^{-1}\beta)^2(c),\beta^{-1}\alpha(a)),\phi(v))
\nonumber\\
&&+\lambda(\mu((\alpha^{-1}\beta)^2(c),b),\psi^{-1}\phi^2(u))
\mbox{  ( using  BiHom-associativity  and (\ref{lbhm1}) )}\\
&&=\mu(\alpha(a),\mu(b,c))+\lambda(\mu(a,b),\psi(w))+\lambda(\mu(a,\beta\alpha^{-1}(c)),\phi(v))
\nonumber\\
&&+\lambda(\mu(\alpha^{-1}\beta(b),\alpha^{-1}\beta(c),\psi^{-1}\phi^2(u))
\mbox{  ( using  BiHom-commutativity )}\nonumber
\end{eqnarray}
Similarly, we prove that
\begin{eqnarray}
&&\bar{\alpha}(a+u)\ast\Big((b+v)\ast(c+w)\Big)
=\mu(\alpha(a),\mu(b,c))+\lambda(\mu(a,b),\psi(w))+\lambda(\mu(a,\beta\alpha^{-1}(c)),\phi(v))\nonumber\\
&&+\lambda(\mu(\alpha^{-1}\beta(b),\alpha^{-1}\beta(c),\psi^{-1}\phi^2(u))
\nonumber
\end{eqnarray}
Hence $(A\oplus V, \ast,\bar{\alpha}, \bar{\beta})$ is a BiHom-commutative BiHom-associative algebra. Now, observe that $(A\oplus V,[\cdot,\cdot],\bar{\alpha},\bar{\beta})$ is a BiHom-Lie algebra (Proposition 4.9, \cite{GRAZIANI} ).
 Finally, let $(a,b,c)\in A^{\times 2}$ and $(u,v,w)\in V^{\times 2}.$ Then
 \begin{eqnarray}
 &&[\bar{\alpha}\bar{\beta}(a+u),(b+v)\ast (c+w)]=[\alpha\beta(a)+\phi\psi(u),
 \mu(b,c)+\lambda(b,w)+\lambda(\alpha^{-1}\beta(),\psi^{-1}\phi(v)]\nonumber\\
 &&=\{\alpha\beta(a),\mu(b,c)\}+\rho(\alpha\beta(a),\lambda(b,w))+\rho(\alpha\beta(a),\lambda(\alpha^{-1}\beta(c),\psi^{-1}\phi(v)))
 \nonumber\\
 &&-\rho(\mu(\beta\alpha^{-1}(b),\beta\alpha^{-1}(c)),\psi\psi^{-1}\phi^2(u))
 =\mu(\{\beta(a),b\},\beta(c))\nonumber\\
 &&+\mu(\beta(c),\{\alpha(a),c\})+\lambda(\{\beta(a),b\},\psi(w))
 +\lambda(\beta(b),\rho(\alpha(a),w))+\lambda(\{\beta(a),\alpha^{-1}\beta(c)\},\phi(v))\nonumber\\
 &&+\lambda(\beta^2\alpha^{-1}(c),\rho(\alpha(a),\psi^{-1}\phi(v)))
 -\lambda(\beta(b),\rho(\beta\alpha^{-1}(c),\psi^{-1}\phi^2(u)))\nonumber\\
 &&-\lambda(\beta^2\alpha^{-1}(c),\rho(b,\psi^{-1}\phi^2(u)))
 \mbox{ ( by (\ref{BHL}}), (\ref{lbhm3}), (\ref{lbhm4}) )\nonumber\\
&& =\Big(\mu(\{\beta(a),b\},\beta(c))+ \lambda(\{\beta(a),b\},\psi(w))+
 \lambda(\beta^2\alpha^{-1}(c),\rho(\alpha(a),\psi^{-1}\phi(v)))\nonumber\\
 &&-\lambda(\beta^2\alpha^{-1}(c),\rho(b,\psi^{-1}\phi^2(u)))  \Big)
  +\Big(\mu(\beta(c),\{\alpha(a),c\})+ \lambda(\beta(b),\rho(\alpha(a),w))
   \nonumber\\
 &&  -\lambda(\beta(b),\rho(\beta\alpha^{-1}(c),\psi^{-1}\phi^2(u)))+\lambda(\{\beta(a),\alpha^{-1}\beta(c)\},\phi(v)) \Big)
 \mbox{ ( rearranging terms )}\nonumber\\
 &&=[\bar{\beta}(a+u),(b+v)]\ast\bar{\beta}(c+w)+\bar{\beta}(b+v)\ast[\bar{\alpha}(a+u),(c+w)]\nonumber
 \end{eqnarray}
 Hence, the conclusion follows.
\end{proof}
\begin{rmk} Consider the  split null extension $A\oplus V$ determined by the left BiHom-Poisson module $(V,\phi,\psi)$ for the BiHom-Poisson algebra
$(A, \{\cdot,\cdot\}, \mu, \alpha,\beta)$ in the previous theorem. Write  elements $a+v$ of $A\oplus V$ as $(a,v).$ Then there is an injective homomorphism of BiHom-modules
$i :V\rightarrow A\oplus V $ given by $i(v)=(0,v)$ and a surjective homomorphism of BiHom-modules $\pi : A\oplus V\rightarrow A$ given by $\pi(a,v)=a.$
Moreover, $i(V)$ is a two-sided BiHom-ideal of $A\oplus V$  such that $A\oplus V/i(V)\cong A$. On the other hand, there is a morphism of BiHom-algebras
$\sigma: A\rightarrow A\oplus V$ given by $\sigma(a)=(a,0)$ which is clearly a section of $\pi.$ Hence, we obtain the abelian split exact sequence of
BiHom-Poisson algebras and $(V, \phi,\psi)$ is a left BiHom-Poisson module for $A$ via $\pi.$
 \end{rmk}

\begin{defn}
Let $(A, \{\cdot,\cdot\}, \mu, \alpha,\beta)$ be a BiHom Poisson algebra. A skew-symmetric $n$-linear map $f: \underbrace{A\times \cdots \times A}_{n ~times}\rightarrow A$ that is a
derivation in each argument is called an $n$-BiHom-cochain, if it satisfies
$$\begin{array}{llllllllllll}
f(\alpha(x_1), \cdots, \alpha(x_n))&=&\alpha\circ f(x_{1}, \cdots, x_{n}),\label{cochain2}\\
f(\beta(x_1), \cdots, \beta(x_n))&=&\beta\circ f(x_{1}, \cdots, x_{n}).\label{cochain3}
\end{array}$$
The set of $n$-Hom-cochains is denoted by $C_{\alpha,\beta}^n(A,A)$, for $n\geq1$.
\end{defn}

\begin{defn}
Let $(A, \{\cdot,\cdot\}, \mu, \alpha,\beta)$ be a regular BiHom Poisson algebra. For $n=1,2,$ the coboundary operator $\delta^{n}:C_{\alpha,\beta}^n(A,A)\rightarrow C_{\alpha,\beta}^{n+1}(A,A)$ is defined as follows:
\begin{eqnarray}
&&\delta^{1}f(x,y)=\{\alpha(x),f(y)\}-\{f(x),\alpha(y)\}-f(\{\alpha^{-1}\beta(x),y\})\\
&&\delta^{2}f(x,y,z)=\{\alpha\beta(x),f(y,z)\}-\{\alpha\beta(y),f(x,z)\}+
\{\alpha\beta(y),f(x,z)\}
\nonumber\\
&&-f(\{\alpha^{-1}\beta(x),y\},\beta(z))+f(\{\alpha^{-1}\beta(x),z\},\beta(y))-f(\{\alpha^{-1}\beta(y),z\},\beta(x))
\end{eqnarray}
\end{defn}

\begin{lem}
The coboundary operators $\delta^i$ are well defined, for $i=1, 2$.
\end{lem}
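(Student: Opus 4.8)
The assertion that ``$\delta^{i}$ is well defined'' means that whenever $f\in C^{n}_{\alpha,\beta}(A,A)$ with $n=1$ (resp. $n=2$), the map $\delta^{n}f$ given by the displayed formula again lies in $C^{n+1}_{\alpha,\beta}(A,A)$; since there are no choices in the formula, nothing else is at issue. Thus I must verify four things for $\delta^{n}f$: it is $(n+1)$-linear (immediate), it is skew-symmetric, it is a derivation with respect to both $\mu$ and $\{\cdot,\cdot\}$ in each argument, and it commutes with $\alpha$ and with $\beta$. The plan is to dispatch $i=1$ completely and then run the identical scheme for $i=2$, where the computations are longer but structurally the same.

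The equivariance under $\alpha,\beta$ is the easiest point: applying $\alpha$ to $\delta^{1}f(x,y)$ and pushing it through each term using the multiplicativity $\alpha\{\cdot,\cdot\}=\{\cdot,\cdot\}\alpha^{\otimes2}$ (valid since $A$ is regular), the hypothesis $f\alpha=\alpha f$, and $\alpha\beta=\beta\alpha$, one gets $\alpha\,\delta^{1}f(x,y)=\delta^{1}f(\alpha(x),\alpha(y))$, and the same with $\beta$; for $\delta^{2}f$ one additionally uses $\alpha\mu=\mu\alpha^{\otimes2}$. Skew-symmetry is next: for $\delta^{1}f$ one rewrites the two bracket terms in the normal form $\{\beta(\cdot),\alpha(\cdot)\}$ (possible because $\alpha,\beta$ are automorphisms and commute with $f$) and applies the BiHom-skew-symmetry $\{\beta(a),\alpha(b)\}=-\{\beta(b),\alpha(a)\}$, while the $f$-term is antisymmetric for the same reason; for $\delta^{2}f$ one checks antisymmetry under each transposition of $x,y,z$, each of which permutes the three bracket terms and the three $f$-terms among themselves up to sign. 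At this stage one also has to settle the correct sign pattern in the printed formula for $\delta^{2}f$ (as displayed, two of its terms cancel and a cyclic term is missing).

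The substantive step is the derivation property. By skew-symmetry it suffices to prove, say, that $\delta^{1}f(\mu(x_{1},x_{2}),y)$ and $\delta^{1}f(\{x_{1},x_{2}\},y)$ expand according to the Leibniz rule with the appropriate twists. Substituting the definition of $\delta^{1}f$ and then invoking, in turn, the BiHom-Leibniz identity \eqref{BHL}, the BiHom-Jacobi identity \eqref{BiHJacob}, and the hypothesis that $f$ is itself a derivation for $\mu$ and for $\{\cdot,\cdot\}$, the resulting terms regroup into the desired form, the commuting of any two of $\alpha,\beta,f$ being used repeatedly to match indices. For $\delta^{2}f$ the same three ingredients drive the computation, but each of the six terms of $\delta^{2}f$ spawns several terms and the reorganisation is correspondingly heavier.

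I expect the main obstacle to be exactly this last computation for $i=2$: a long identity-chase in which one must apply BiHom-Jacobi and BiHom-Leibniz several times while keeping careful track of the powers of $\alpha,\beta$ and of the inverses $\alpha^{-1},\beta^{-1}$ appearing in the coboundary formulas, after first pinning down the correct form of $\delta^{2}$. Everything else reduces to short formal manipulations with multiplicativity and BiHom-skew-symmetry.
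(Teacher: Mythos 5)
The part of your proposal that the paper actually proves is the $\alpha$- and $\beta$-equivariance of $\delta^{1}f$ and $\delta^{2}f$, and there your argument coincides with the paper's: one plugs $\alpha(x),\alpha(y)$ (resp.\ $\alpha(x),\alpha(y),\alpha(z)$) into the defining formula and pushes $\alpha$ through each term using multiplicativity of the bracket (and of $\mu$ where it appears), the commutation $f\alpha=\alpha f$, $f\beta=\beta f$, and $\alpha\beta=\beta\alpha$, and likewise for $\beta$. That is literally the whole of the paper's proof of this lemma, so on that portion you are in agreement. Your side remark about the displayed formula for $\delta^{2}$ is also correct: as printed it contains the pair $-\{\alpha\beta(y),f(x,z)\}+\{\alpha\beta(y),f(x,z)\}$, which cancels, and the cyclic term in $\{\alpha\beta(z),f(x,y)\}$ is missing, so a sign/typo correction is indeed needed before any symmetry check can be meaningful.

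The gap is that your proposal sets itself a strictly larger task than it carries out. You correctly observe that, by the paper's own definition of $C^{n+1}_{\alpha,\beta}(A,A)$, well-definedness should also require skew-symmetry and the property of being a derivation in each argument, and you then only \emph{announce} these verifications: the skew-symmetry of the (corrected) $\delta^{2}$ and, above all, the multiderivation property of $\delta^{1}f$ and $\delta^{2}f$ are deferred to ``a long identity-chase'' invoking the BiHom-Leibniz and BiHom-Jacobi identities, without exhibiting the regrouping that is supposed to make the Leibniz rule come out with the right twists by $\alpha^{\pm1},\beta^{\pm1}$. This is not a routine bookkeeping step that can be waved through: even in the untwisted case the statement that the coboundary of a (multi)derivation is again a derivation in each slot depends on a genuine computation (essentially a Schouten-bracket identity), and in the BiHom setting the placement of the twists in the coboundary formula is exactly what has to be checked. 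So, as a self-contained proof of the claim as you have framed it, the decisive step is missing at precisely the point you flag as the main obstacle; if instead you adopt the paper's narrower reading of ``well defined'' (equivariance only), your argument is complete and is the same as the paper's.
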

\begin{proof}
For any $x,y,z\in A$ we have:
\begin{eqnarray}
&&\delta^{1}f(\alpha(x),\alpha(y))=\{\alpha^2(x),f\alpha(y)\}-\{f\alpha(x),\alpha^2(y)\}-f(\{\alpha^{-1}\beta\alpha(x),\alpha y)\})\nonumber\\
&&=\{\alpha^2(x),\alpha f(y)\}-\{\alpha f(x),\alpha^2(y)\}-f(\{\alpha\alpha^{-1}\beta(x),\alpha(y)\})=\alpha\circ\delta^1 f(x, y)\nonumber
\end{eqnarray}
and
\begin{eqnarray}
&&\delta^{2}f(\alpha(x),\alpha(y),\alpha(z))=\{\alpha\beta\alpha(x),f(\alpha(y),\alpha(z))\}-\{\alpha\beta\alpha(y),f(\alpha(x),\alpha(z))\}
\nonumber\\
&&+
\{\alpha\beta\alpha(y),f(\alpha(x),\alpha(z))\}-f(\{\alpha^{-1}\beta\alpha(x),\alpha(y)\},\beta\alpha(z))+f(\{\alpha^{-1}\beta\alpha(x),\alpha(z)\},\beta\alpha(y))\nonumber\\
&&-f(\{\alpha^{-1}\beta\alpha(y),\alpha(z)\},\beta\alpha(x))=
\{\alpha\beta\alpha(x),\alpha f(y,z)\}-\{\alpha\beta\alpha(y),\alpha f(x,z)\}
\nonumber\\
&&+
\{\alpha\beta\alpha(y),\alpha f(x,z)\}-f(\alpha(\{\alpha^{-1}\beta(x),y\}),\alpha\beta(z))+f(\alpha(\{\alpha^{-1}\beta()x,z\}),\alpha\beta(y))\nonumber\\
&&-f(\alpha(\{\alpha^{-1}\beta(y),z\}),\alpha\beta(x))=\alpha\circ\delta^2 f(x,y,z)\nonumber
\end{eqnarray}
In the same way, we obtain $\delta^{1}f\circ\beta=\beta\circ\delta^1f$ and $\delta^{2}f\circ\beta=\beta\circ\delta^2 f.$

Then $\delta^i$ are well defined, for $i=1,2$.
\end{proof}
\begin{thm}
With notations as above, we have
$$\delta^{2}\circ\delta^{1}=0.$$
\end{thm}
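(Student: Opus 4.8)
The plan is a direct expansion. Fix $f\in C^1_{\alpha,\beta}(A,A)$; thus $f$ commutes with $\alpha$ and $\beta$ and is a derivation of both $\mu$ and $\{\cdot,\cdot\}$. By the previous lemma $g:=\delta^1 f$ lies in $C^2_{\alpha,\beta}(A,A)$, and one checks directly that $g$ is a derivation in each slot (this uses nothing beyond the derivation property of $f$, the BiHom-Leibniz identity \eqref{BHL} and the BiHom-Jacobi identity \eqref{BiHJacob}). The goal is to show $\delta^2 g(x,y,z)=0$ for all $x,y,z\in A$. First I would substitute
\[
g(u,v)=\{\alpha(u),f(v)\}-\{f(u),\alpha(v)\}-f(\{\alpha^{-1}\beta(u),v\})
\]
into the defining formula of $\delta^2$, using regularity to make sense of the $\alpha^{-1}\beta$ factors, multiplicativity of $\alpha,\beta$, and $f\alpha=\alpha f$, $f\beta=\beta f$ to push all twisting maps to the outermost position. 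This produces a sum of roughly eighteen terms.

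Next I would sort these terms into three families. \emph{Family (i):} iterated brackets of the shape $\{\alpha\beta(\cdot),\{\alpha(\cdot),f(\cdot)\}\}$ and $\{f(\cdot),\{\cdot,\cdot\}\}$, coming from the $\{\alpha\beta(\cdot),g(\cdot,\cdot)\}$ summands of $\delta^2$ and from the $\{f(\cdot),\alpha(\cdot)\}$ part of the $g(\{\cdot,\cdot\},\cdot)$ summands. \emph{Family (ii):} terms of the form $f$ applied to a twisted iterated bracket, coming from the third summand of $g$ in each slot. \emph{Family (iii):} mixed terms $\{\alpha\beta(\cdot),f(\{\cdot,\cdot\})\}$ and $\{f(\{\cdot,\cdot\}),\alpha\beta(\cdot)\}$. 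Family (ii) vanishes on its own: after using $f\alpha=\alpha f$, $f\beta=\beta f$ to extract the twists, the arguments of $f$ form exactly the cyclic sum appearing in the BiHom-Jacobi identity \eqref{BiHJacob} for $(A,\{\cdot,\cdot\},\alpha,\beta)$ (up to the BiHom-skew-symmetry needed to line up the three slots), hence add to $f(0)=0$.

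For family (iii) I would apply the derivation property of $f$ for the bracket (condition (2) of the definition of a derivation, with $k=l=0$), rewriting each $f(\{u,v\})$ as $\{\alpha\beta(u),f(v)\}+\{f(u),\alpha\beta(v)\}$ with the twists dictated by multiplicativity. This turns every term of family (iii) into iterated brackets of the same shape as those in family (i). Collecting families (i) and (iii), one is left with a sum of triple brackets which, after normalising all twists via multiplicativity and realigning slots via BiHom-skew-symmetry, is precisely a signed combination of instances of the BiHom-Jacobi identity $\circlearrowleft_{x,y,z}\{\beta^2(x),\{\beta(y),\alpha(z)\}\}=0$; hence the total is zero and $\delta^2 g=0$, i.e. $\delta^2\circ\delta^1=0$.

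The main obstacle is entirely bookkeeping: one must track the exponents of $\alpha$ and $\beta$ in every summand so that the BiHom-Jacobi identity (which is stated with the specific twists $\beta^2,\beta,\alpha$) and BiHom-skew-symmetry (with $\beta,\alpha$) become literally applicable, the regularity hypothesis being used precisely to absorb the $\alpha^{-1}\beta$ factors that otherwise obstruct this matching. Before starting I would also reconcile the printed formula for $\delta^2$, in which the second and third summands $\mp\{\alpha\beta(y),f(x,z)\}$ coincide, replacing the third one by the intended alternating term $+\{\alpha\beta(z),f(x,y)\}$, and similarly double-check the signs of the last three summands.
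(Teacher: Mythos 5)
Your overall strategy coincides with the paper's: both proofs substitute $\delta^{1}f$ into $\delta^{2}$, expand everything, and kill the resulting sum with the twisted skew-symmetry $\{u,v\}=-\{\beta\alpha^{-1}(v),\alpha\beta^{-1}(u)\}$ and the BiHom-Jacobi identity, and your family (ii) is handled exactly as in the paper (the three terms $f(\{\{\alpha^{-2}\beta^{2}(\cdot),\alpha^{-1}\beta(\cdot)\},\beta(\cdot)\})$ are realigned by skew-symmetry and add up to $f$ applied to a Jacobi sum, hence vanish). The one genuine divergence is your family (iii). In the paper these mixed terms are never fed into the derivation property of $f$: each term $\mp\{\alpha\beta(\cdot),f(\{\alpha^{-1}\beta(\cdot),\cdot\})\}$ coming from the first three summands of $\delta^{2}$ meets, with opposite sign, the term $\pm\{f(\{\alpha^{-1}\beta(\cdot),\cdot\}),\alpha\beta(\cdot)\}$ coming from the last three summands (after the same twisted skew-symmetry is used to put the arguments in the same order), and they cancel in pairs, so only families (i) and (ii) survive to be matched against the Jacobi identity. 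Your detour --- expanding each $f(\{u,v\})$ by the derivation property and folding the output into family (i) --- is not wrong in principle, since you are merely rewriting a subsum that already vanishes, but it buys nothing and multiplies the bookkeeping; moreover the rewriting you propose, $f(\{u,v\})=\{\alpha\beta(u),f(v)\}+\{f(u),\alpha\beta(v)\}$, is not what the $k=l=0$ derivation condition gives (that condition is untwisted, $f(\{u,v\})=\{f(u),v\}+\{u,f(v)\}$; multiplicativity of $\alpha,\beta$ does not license inserting $\alpha\beta$ there), so on this route you would have to redo the twist accounting before the surviving triple brackets line up with $\circlearrowleft_{x,y,z}\{\beta^{2}(x),\{\beta(y),\alpha(z)\}\}=0$. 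Finally, your correction of the printed $\delta^{2}$ (replacing the duplicated $\mp\{\alpha\beta(y),f(x,z)\}$ by the alternating term in $\{\alpha\beta(z),f(x,y)\}$) is exactly what the paper's own expansion silently uses, so that adjustment is the right one.
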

\begin{proof} Let $f\in C_{\alpha,\beta}^1(A,A)$ and $(x,y,z)\in A^{\times 3}$ then, we have:
\begin{eqnarray}
&&\delta^{2}\circ\delta^{1} f(x,y,z)=
\{\alpha\beta(x),\delta^{1} f(y,z)\}-\{\alpha\beta(y),\delta^{1} f(x,z)\}+
\{\alpha\beta(y),\delta^{1} f(x,z)\}
\nonumber\\
&&-\delta^{1} f(\{\alpha^{-1}\beta(x),y\},\beta(z))+\delta^{1} f(\{\alpha^{-1}\beta(x),z\},\beta(y))-\delta^{1} f(\{\alpha^{-1}\beta(y),z\},\beta(x))\nonumber\\
&=&\{\alpha\beta(x),\{\alpha(y),f(z)\}\}-\{\alpha\beta(x),\{\alpha(z),f(y)\}\}
-\{\alpha\beta(x),f(\{\alpha^{-1}\beta(y),z\})\}\nonumber\\
&&-\{\alpha\beta(y),\{\alpha(x),f(z)\}\}+\{\alpha\beta(y),\{\alpha(z),f(x)\}\}
+\{\alpha\beta(y),f(\{\alpha^{-1}\beta(x),z\})\}\nonumber\\
&&+\{\alpha\beta(z),\{\alpha(x),f(y)\}\}-\{\alpha\beta(z),\{\alpha(y),f(x)\}\}
-\{\alpha\beta(z),f(\{\alpha^{-1}\beta(x),y\})\}\nonumber\\
&&-\{\{\beta(x),\alpha(y)\}, f\beta(z)\}+\{\alpha\beta(z),f(\{\alpha^{-1}\beta(x),y\}),\}+f(\{\{\alpha^{-2}\beta^2(x),\alpha^{-1}\beta(y)\},\beta(z)\})\nonumber\\
&&+\{\{\beta(x),\alpha(z)\}, f\beta(y)\}-\{\alpha\beta(y),f(\{\alpha^{-1}\beta(x),z\}),\}-f(\{\{\alpha^{-2}\beta^2(x),\alpha^{-1}\beta(z)\},\beta(y)\})\nonumber\\
&&-\{\{\beta(y),\alpha(z)\}, f\beta(x)\}+\{\alpha\beta(x),f(\{\alpha^{-1}\beta(y),z\}),\}+f(\{\{\alpha^{-2}\beta^2(y),\alpha^{-1}\beta(z)\},\beta(x)\})\nonumber\\
&=&\{\beta^2(\alpha\beta^{-1}(x)),\{\beta(\alpha\beta^{-1}(y)),\alpha(\alpha^{-1}f(z))\}\}+\{\beta^2(\alpha\beta^{-1}(x)),\{\beta(\alpha^{-1} f(y)),\alpha(\beta\alpha^{-1}(z))\}\}\nonumber\\
&&-\{\alpha\beta(x),f(\{\alpha^{-1}\beta(y),z\})\}-\{\beta^2(\alpha\beta^{-1}(y)),\{\beta(\alpha^{-1}f(z)),\alpha(\beta\alpha^{-1}(x))\}\}\nonumber\\
&&+\{\beta^2(\alpha\beta^{-1}(y)),\{\beta(\beta^{-1}\alpha(z)),\alpha(\alpha^ {-1}f(x))\}\}
+\{\alpha\beta(y),f(\{\alpha^{-1}\beta(x),z\})\}\nonumber\\
&&+\{\beta^2(\alpha\beta^{-1}(z)),\{\beta(\beta^{-1}\alpha(x)),\alpha(\alpha^{-1}f(y))\}\}+\{\beta^2(\alpha\beta^{-1}(z)),\{\beta(\alpha^{-1} f(x)),\alpha(\alpha\beta^{-1}(y))\}\}\nonumber\\
&&-\{\alpha\beta(z),f(\{\alpha^{-1}\beta(x),y\})\}
+\{\beta^2(\alpha^{-1}f(z)),\{\beta(\alpha\beta^{-1}(x)),\alpha(\beta^{-1}\alpha(y))\}\}
\nonumber\\
&&+\{\alpha\beta(z),f(\{\alpha^{-1}\beta(x),y\}),\}-f(\{\beta^2(\alpha^{-1}(z)),\{\beta(\alpha^{-1}(x)),\alpha(\alpha^{-1}(y))\}\})\nonumber\\
&&+\{\beta^2(\alpha^{-1}f(y)),\{\beta(\beta^{-1}\alpha(z)),\alpha(\beta^{-1}\alpha(x))\}\}-\{\alpha\beta(y),f(\{\alpha^{-1}\beta(x),z\}),\}
\nonumber\\
&&-f(\{\beta^2(\alpha^{-1}(y)),\{\beta(\alpha^{-1}(z)),\alpha(\alpha^{-1}(x))\}\})+\{\beta^2(\alpha^{-1}f(x)),\{\beta(\alpha\beta^{-1}(y)),\alpha(\alpha\beta^{-1}(z))\}\}\nonumber\\
&&+\{\alpha\beta(x),f(\{\alpha^{-1}\beta(y),z\}),\}-f(\{\beta^2(\alpha^{-1}(x)),\{\beta(\alpha^{-1}(y)),\alpha(\alpha^{-1}(z))\}\})\nonumber\\
&& \mbox{ ( since  $\{u,v\}=-\{\beta\alpha^{-1}(v),\alpha\beta^{-1}(u)\}$ $\forall u,v\in A$)}\nonumber\\
&&=0 \mbox{ ( by  the BiHom-Jaobi identity) }. \nonumber
\end{eqnarray}
\end{proof}
For $n=1,2,$ the map $f\in C_{\alpha,\beta}^n(A,A)$ is called an $n$-BiHom-cocycle $\delta^n f=0.$ We denote the subspace spanned by $n$-Bihom-cocycles by
 $Z_{\alpha,\beta}^n(A,A)$ and $B_{\alpha,\beta}^n(A,A)=\delta^{n-1}C_{\alpha,\beta}^{n-1}(A,A).$ Since  $\delta^2\circ\delta^1=0,$ $B_{\alpha,\beta}^2(A,A)$
 is a subspace of $Z_{\alpha,\beta}^2(A,A).$ Hence we can define a cohomology space $H_{\alpha,\beta}^2(A,A)$ of as the factor space
 $Z_{\alpha,\beta}^2(A,A)/B_{\alpha,\beta}^2(A,A).$

\end{document}